\newtheorem{theorem}{Theorem}
\newtheorem{lemma}[theorem]{Lemma}
\newtheorem{conjecture}[theorem]{Conjecture}
\newtheorem{corollary}[theorem]{Corollary}
\newtheorem{proposition}[theorem]{Proposition}
\numberwithin{theorem}{section}
\newcommand{\PP}{\mathbb{P}}
\newcommand{\EE}{\mathbb{E}}
\begin{document}

\title{Edge-partitioning a graph into paths: \\ beyond the Bar\'at-Thomassen conjecture\thanks{The first author was supported by ERC Advanced Grant GRACOL, project no. 320812. The second
author was supported by an FQRNT postdoctoral research grant and CIMI research fellowship. The fourth author was partially supported by the 
ANR Project STINT under Contract ANR-13-BS02-0007.}}

\author{Julien Bensmail$^a$, Ararat Harutyunyan$^b$, \\Tien-Nam Le$^c$, and St\'{e}phan Thomass\'{e}$^c$\\~\\
			\small $^a$Department of Applied Mathematics and Computer Science \\ \small Technical University of Denmark \\ \small DK-2800 Lyngby, Denmark\\~\\
			\small $^b$Institut de Math\'ematiques de Toulouse \\ \small Universit\'e Toulouse III \\ \small 31062 Toulouse Cedex 09, France\\~\\
			\small $^c$Laboratoire d'Informatique du Parall\'elisme \\ \small \'Ecole Normale Sup\'erieure de Lyon \\ \small 69364 Lyon Cedex 07, France}

\date{}

\maketitle

\begin{abstract}
In 2006, Bar\'at and Thomassen conjectured that there is a function $f$ such that, 
for every fixed tree $T$ with $t$ edges, every $f(t)$-edge-connected graph
with its number of edges divisible by $t$ has a partition of its edges into copies 
of $T$. 
This conjecture was recently verified by the current authors and Merker~\cite{BHLMT16+}.

We here further focus on the path case of the Bar\'at-Thomassen conjecture.
Before the aforementioned general proof was announced,
several successive steps towards the path case of the conjecture were made,
notably by Thomassen~\cite{Tha08,Thb08,Tha13},
until this particular case was totally solved by Botler, Mota, Oshiro and Wakabayashi~\cite{BMOW14}.
Our goal in this paper is to propose an alternative proof of the path case 
with a weaker hypothesis: Namely, we prove that there is a function $f$ such that every 
$24$-edge-connected graph with minimum degree $f(t)$ has an edge-partition into paths of length $t$
whenever $t$ divides the number of edges. We also show that~$24$ can be dropped to~$4$ when the
graph is eulerian.
\end{abstract}

\section{Introduction} \label{section:introduction}

Unless stated otherwise, graphs considered here are generally simple, loopless and undirected. Given 
a graph $G$, we denote by $V(G)$ and $E(G)$ its vertex and edge sets, respectively. Given a 
vertex $v$ of $G$, we denote by $d_G(v)$ (or simply $d(v)$ in case no ambiguity is possible) 
the degree of $v$ in $G$, i.e. the number of edges incident to $v$ in $G$. We denote by $\delta(G)$ 
and $\Delta(G)$ the minimum and maximum, respectively, degree of a vertex in $G$.
When $X$ is a subset of vertices of $G$, we denote by $d_X(v)$ the degree of 
$v$ in the subgraph of $G$ induced by $X\cup \{v\}$.
Given two graphs $G = (V, E)$ and $H = (V, F)$ with $F \subseteq E$, we denote by $G \backslash H$
the graph $(V, E \backslash F)$.

Let $G$ and $H$ be two graphs such that $|E(H)|$ divides $|E(G)|$. We say that $G$ is 
\textit{$H$-decomposable} if there exists a partition $E_1 \cup E_2 \cup ... \cup E_k$ of $E(G)$ 
such that every $E_i$ induces an isomorphic copy of $H$. We then call $E_1 \cup E_2 \cup ... \cup E_k$ 
an \textit{$H$-decomposition} of $G$.

\medskip

This paper is devoted to the following conjecture raised by Bar\'at and Thomassen in~\cite{BT06}, 
stating that highly edge-connected graphs can be decomposed into copies of any tree.

\begin{conjecture} \label{conjecture:barat-thomassen}
For any fixed tree $T$, there is an integer $c_T$ such that every 
$c_T$-edge-connected graph with its number of edges divisible by $|E(T)|$ 
can be $T$-decomposed.
\end{conjecture}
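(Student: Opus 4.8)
The plan is to prove Conjecture~\ref{conjecture:barat-thomassen} for an arbitrary fixed tree $T$ with $t:=|E(T)|$ edges by combining an \emph{approximate} $T$-decomposition with an \emph{absorbing} gadget, inside an induction on the number of branch vertices of $T$ whose base case is the path case of the conjecture --- the subject of the rest of this paper. A first remark removes any minimum-degree concern: a $c_T$-edge-connected graph has $\delta(G)\ge c_T$, so we may take $\delta(G)$ as large as we wish in terms of $t$, and the whole difficulty is in how the copies of $T$ fit together globally, which is where edge-connectivity is actually spent.

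The argument would run in three phases. \textbf{Phase 1 (shaping $G$):} using the Nash--Williams--Tutte theorem ($2k$-edge-connected graphs have $k$ edge-disjoint spanning trees) together with splitting-off and edge-connectivity surgery --- in the spirit of the eulerian reduction carried out later in this paper for paths, where $24$ drops to $4$ --- partition $E(G)$ into boundedly many (in terms of $t$) highly edge-connected spanning subgraphs that are eulerian and near-regular; it suffices to $T$-decompose each of them, so we may assume $G$ itself has this shape. \textbf{Phase 2 (reserving an absorber):} before anything else, set aside a carefully designed spanning subgraph $R\subseteq G$ with $t\mid|E(R)|$ that is \emph{absorbing}, meaning that $R\cup J$ is $T$-decomposable for every subgraph $J$ on $V(R)$ with $|E(J)|$ below a reserved threshold and with $t\mid|E(R)|+|E(J)|$. \textbf{Phase 3 (nibble and absorb):} on $G\setminus R$, which is still near-regular of huge degree, a random greedy (nibble) argument packs edge-disjoint copies of $T$ covering all but a small leftover $J$, with $t\mid|E(R)|+|E(J)|$ because $t\mid|E(G)|$; then $R\cup J$ is $T$-decomposable by the absorbing property, and together with the nibble copies this $T$-decomposes $G$.

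The crux --- and the step I expect to force the dependence of $c_T$ on $T$ --- is Phase~2: building $R$ and proving it absorbs, while matching the leftover scale from the nibble to the reserved capacity of $R$ and keeping the needed edge-connectivity a function of $t$ alone. I would do this by induction on the number $b$ of branch vertices of $T$. For $b=0$, $T$ is a path and this is essentially the path case re-derived later in the paper: a long, highly edge-connected ``path-absorber'' swallows any small extra edge set by rerouting along the many internally disjoint paths furnished by Menger's theorem. For the inductive step, decompose $T$ as a tree with $b-1$ branch vertices carrying a pendant broom (and, at the bottom of the recursion, pendant stars), assemble $R$ from absorbers for these smaller pieces joined through a highly edge-connected skeleton, and show that when copies of $T$ are peeled from $R\cup J$ one at a time, the connectivity of the skeleton always permits the \emph{next} partial copy of $T$ to be completed from untouched edges of $R$; pendant stars are handled via the fact that highly edge-connected graphs admit orientations with all in-degrees divisible by $t$, hence $K_{1,t}$-decompositions. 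The main technical risk is that the completion step naively forces the edge-connectivity to grow with the recursion depth $b$; controlling it uniformly in $t$ is exactly the work carried out in \cite{BHLMT16+}.
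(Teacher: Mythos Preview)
This paper does not contain a proof of Conjecture~\ref{conjecture:barat-thomassen}. The statement is recorded here as a conjecture, with its resolution attributed to~\cite{BHLMT16+}; the present paper only treats the path case, and does so via the path-graph and eulerian-tour machinery of Sections~\ref{section:path-graphs}--\ref{section:main}, which is unrelated to the absorption/nibble scheme you outline. So there is no ``paper's own proof'' to compare against.

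As to your proposal itself: it is a research plan, not a proof, and the plan has a genuine gap precisely where you locate it. Phase~2 is the entire content of the conjecture, and you do not carry it out --- you sketch an induction on the number of branch vertices, then concede that keeping the edge-connectivity bounded in $t$ through the induction ``is exactly the work carried out in~\cite{BHLMT16+}''. That is circular: you are invoking the theorem you set out to prove. Concretely, the inductive step ``assemble $R$ from absorbers for the smaller pieces joined through a highly edge-connected skeleton, and peel copies of $T$ one at a time'' does not come with any mechanism guaranteeing that an untouched copy of $T$ can always be completed; greedy peeling of this kind typically exhausts the edges at some vertex long before the leftover is absorbed, and it is exactly here that naive arguments force the connectivity to blow up with the recursion depth. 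The remark that orientations with all in-degrees divisible by $t$ yield $K_{1,t}$-decompositions is correct but does not by itself produce an \emph{absorber} for stars (i.e.\ something that remains $K_{1,t}$-decomposable after an arbitrary small $J$ is added). Phase~1 is also more delicate than stated: splitting $E(G)$ into boundedly many eulerian, near-regular, highly edge-connected spanning subgraphs is not a routine consequence of Nash--Williams--Tutte plus splitting-off.

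For context, the actual argument in~\cite{BHLMT16+} does not follow the absorption template you describe; it is a probabilistic construction that directly builds the decomposition, and the present paper's path-case proof is of yet another flavour (structural, via low-conflict path-graphs and Jackson's compatible-euler-tour theorem). Your outline is a reasonable heuristic for why the conjecture ought to be true, but as written it defers the decisive step to the literature rather than supplying it.
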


\noindent Conjecture~\ref{conjecture:barat-thomassen} was recently solved by the current authors and Merker in~\cite{BHLMT16+}.
For a summary of the progress towards the conjecture, we hence refer the interested reader to that paper.
Before this proof was announced, the path case of the conjecture had been tackled through successive steps.
First, the conjecture was verified for paths of small length, namely for $T$ being $P_3$ and $P_4$ by Thomassen~\cite{Tha08,Thb08},
where $P_\ell$ here and further denotes the path on $\ell$ edges.
Thomassen then proved, in~\cite{Tha13}, the conjecture for arbitrarily long paths of the form $P_{2^k}$.
Later on, Botler, Mota, Oshiro and Wakabayashi proved the conjecture for $P_5$~\cite{BMOW15+} before generalizing their arguments
and settling the conjecture for all paths~\cite{BMOW14}.

\medskip

Conjecture~\ref{conjecture:barat-thomassen} being now solved, 
many related lines of research sound quite appealing.
One could for example wonder, for any fixed tree~$T$,
about the least edge-connectivity guaranteeing the existence of $T$-decompositions.
We note that the proof of Conjecture~\ref{conjecture:barat-thomassen} from~\cite{BHLMT16+},
because essentially probabilistic, provides a huge bound on the required edge-connectivity,
which is clearly far from optimal.
Another interesting line of research,
is about the true importance of large edge-connectivity over large minimum degree in the statement of Conjecture~\ref{conjecture:barat-thomassen}.
Of course, one can notice that, to necessarily admit $T$-decompositions, graphs among some family must meet a least edge-connectivity condition.
We however believe that this condition can be lowered a lot, provided this is offset by a large minimum degree condition.
More precisely, we believe the following refinement of Conjecture~\ref{conjecture:barat-thomassen} makes sense.

\begin{conjecture}  \label{conjecture:min-degree-general}
There is a function $f$ such that, for any fixed tree $T$ with maximum degree $\Delta_T$, every 
$f(\Delta_T)$-edge-connected graph with its number of edges divisible by $|E(T)|$ and minimum degree at least
$f(|E(T)|)$ can be $T$-decomposed.
\end{conjecture}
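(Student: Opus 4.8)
\medskip

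I would attack Conjecture~\ref{conjecture:min-degree-general} by letting the two hypotheses play complementary roles. Write $t := |E(T)|$ and $\Delta := \Delta_T$. The minimum-degree condition $\delta(G)\ge f(t)$ should carry the \emph{bulk} of the decomposition, while the edge-connectivity condition $f(\Delta)$ --- which, crucially, must not depend on $t$ --- should be spent only on \emph{repairing} a small, well-structured remainder. The plan is thus in three stages: (i) reserve a sparse ``fixer'' $A\subseteq G$; (ii) near-perfectly decompose $G\setminus A$ into copies of $T$, leaving a low-degree leftover $L$; (iii) absorb $L$ by showing $A\cup L$ is $T$-decomposable.

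For Stage~(i), I would use the $f(\Delta)$-edge-connectivity together with the Nash-Williams--Tutte theorem to extract $\Theta(\Delta)$ edge-disjoint spanning trees, and take $A_0$ to be the union of $c\Delta$ of them for a suitable absolute constant $c$: then $A_0$ is spanning, connected, has all degrees $O(\Delta)$, and stays $\Omega(\Delta)$-edge-connected after deleting any bounded number of edges. I would then enlarge $A_0$ to $A$ by attaching, along $A_0$, boundedly many pairwise edge-disjoint copies of $T$ at each vertex (possible since $\delta(G)$ is enormous), engineering $A$ into a genuine \emph{absorber}: the target property is that $A\cup L$ is $T$-decomposable for every $L\subseteq G\setminus A$ with $\Delta(L)\le g(\Delta)$ and $t\mid |E(L)|$. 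Note that $A$ is built with degrees and edge-count depending only on $\Delta$ (up to bounded error), which is what keeps the final connectivity requirement a function of $\Delta$ alone.

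For Stage~(ii), I would first prove that a \emph{fractional} $T$-decomposition of $G\setminus A$ exists --- this should follow from $\delta(G\setminus A)$ being large, trees being flexible enough to pack fractionally in high-minimum-degree graphs (in the spirit of, but easier than, the known fractional decomposition results for cliques and cycles) --- and then round it to an integral family of edge-disjoint copies of $T$ by nibble-type or iterative rounding, so that the uncovered edges form a subgraph $L$ with $\Delta(L)\le g(\Delta)$. Divisibility $t\mid|E(L)|$ is arranged by shifting boundedly many copies of $T$ between the two stages, using $t\mid|E(G)|$. Feeding $L$ into the absorber then finishes the decomposition of $G$.

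The hard part will be Stage~(i): designing $A$ so that $A\cup L$ is genuinely always $T$-decomposable while spending only $\Delta$-bounded connectivity. Whether a small graph decomposes into copies of $T$ is governed by local congruence/orientation constraints at each vertex (e.g.\ for $T=K_{1,t}$ one needs an orientation with all in-degrees divisible by $t$; for general $T$ the condition is subtler), and making these constraints simultaneously satisfiable forces one to reroute edges across vertex cuts of $A\cup L$ --- where the number of edges that must be pushed across a cut is controlled by $\deg_T$-values, i.e.\ by $\Delta$ and not by $t$. That is the heuristic behind the conjecture, but converting it into a proof, and in particular designing $A$ in advance so as to anticipate every low-degree $L$, is already substantial in the path case $\Delta=2$ treated in this paper and seems genuinely open for $\Delta\ge 3$. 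I would expect a complete proof to hinge on a clean structural characterisation, up to bounded-size correction, of the $T$-decomposable graphs of moderate edge-connectivity; and I do not see how to invoke \cite{BHLMT16+} as a black box, since the connectivity it provides depends on $t$, so a new argument --- or a refinement of that method tracking the dependence on $\Delta$ versus $t$ --- appears to be needed.
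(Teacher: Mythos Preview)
The statement you were asked to prove is Conjecture~\ref{conjecture:min-degree-general}, and the paper does \emph{not} prove it: it is stated as an open conjecture, and the paper establishes only the special case $\Delta_T\le 2$ (Theorem~\ref{theorem:24}). There is therefore no ``paper's own proof'' to compare against. Your write-up is, accordingly, not a proof but a research programme --- and you say so yourself in the last paragraph (``seems genuinely open for $\Delta\ge 3$'', ``a new argument \dots appears to be needed''). So the honest assessment is: this is not a proof, and is not claimed to be one.

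That said, it is worth contrasting your proposed absorber strategy with what the paper actually does in the path case. The paper does \emph{not} set aside a pre-built absorber $A$ and then near-decompose $G\setminus A$. Instead it (i) expresses $G$ as an $(\ell,\ell{+}1)$-path-graph $H$ with small conflict ratio via a probabilistic construction (Theorems~\ref{dense} and~\ref{ll1}); (ii) uses the constant edge-connectivity only to extract a few edge-disjoint spanning trees, from which it builds sparse $(\ell,2\ell)$-path-trees (Lemmas~\ref{lemma:13-tree}--\ref{l2l}) whose sole job is to \emph{fix the parity} of every vertex so that the resulting path-graph becomes eulerian; and (iii) invokes Jackson's theorem to find a non-conflicting eulerian tour, which is then chopped into $\ell$-paths. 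The ``repair'' step here is parity correction, not absorption of an arbitrary low-degree leftover, and it is tailored to paths: a path contributes odd degree only at its two ends, so degree-parity is the only obstruction, and it can be handled tree-by-tree. Your Stage~(i) absorber would have to handle much richer local congruence data for general $T$, which is exactly the obstacle you identify; the paper's method gives no hint of how to do that for $\Delta\ge 3$, and indeed the authors pose Conjecture~\ref{conjecture:min-degree-general} precisely because they do not know how.
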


In this paper, we make a first step towards Conjecture~\ref{conjecture:min-degree-general} by showing
it to hold when $\Delta_T \leq 2$, that is for the cases where $T$ is a path.

\begin{theorem} \label{theorem:24}
For every integer $\ell \geq 2$, there exists $d_\ell$ such that every $24$-edge-connected graph
$G$ with minimum degree at least~$d_\ell$ has a decomposition
into paths of length $\ell$ and an additional path of 
length at most $\ell$.
\end{theorem}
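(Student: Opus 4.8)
The plan is to build the proof around a \emph{cyclic decomposition engine} that works for eulerian graphs with only $4$-edge-connectivity, and then to reduce the general case to it by a parity-correction step that consumes the remaining edge-connectivity (morally, $24 = 4 + 20$). For the engine, suppose first that $G$ is $4$-edge-connected and eulerian with $\delta(G)$ huge. Fix an Eulerian orientation and decompose the arc set into arc-disjoint \emph{simple} directed cycles $C_1,\ldots,C_p$, which is possible for any eulerian digraph. The crucial point is that a contiguous segment of a simple cycle is automatically a path, so cutting each cycle $C_i$ with $|C_i|\ge\ell$ into consecutive blocks of length exactly $\ell$ already produces genuine paths of length $\ell$ and leaves only short leftover pieces: one per long cycle, together with the cycles that were shorter than $\ell$ to begin with. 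What remains is to glue all of these short pieces together, spending the slack so that a single path of length $<\ell$ survives.

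The gluing step is where the large minimum degree is used (one may also set aside at the outset a sparse, highly edge-connected spanning reservoir, available from the spanning-tree packing, to make this step more robust). In our cycle decomposition every vertex $v$ lies on exactly $d_G(v)/2\ge\delta(G)/2$ of the cycles, so at any prescribed vertex there is always an as-yet-untouched cycle through it avoiding any given bounded set of vertices. Using this, I would maintain one active short piece and repeatedly lengthen it: pick a cycle through an endpoint $x$ of the active piece, re-choose where that cycle is cut so that one of its leftover blocks meets the piece exactly at $x$, choose the cycle to avoid the few vertices already on the active piece so the concatenation stays a path, and splice; whenever the active piece reaches a length that is a multiple of $\ell$, peel off completed paths of length $\ell$. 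Iterating over all short pieces leaves exactly one short path, the arithmetic being absorbed by the freedom in the initial cut positions, and the $4$-edge-connectivity ensures the needed cycles remain reachable so the process terminates globally. For the general (non-eulerian) case, I would precede the engine by a parity-correction step: let $W$ be the set of odd-degree vertices of $G$ (of even size), pair up $W$, and route the pairs through edge-disjoint spanning trees of $G$ — abundant since $G$ is $24$-edge-connected, by the Nash--Williams--Tutte theorem — so that the resulting subgraph $J$ with odd-degree set $W$ uses only a controlled number of edges of every cut. Then $G\setminus J$ is eulerian, still has huge minimum degree and is still $4$-edge-connected, the engine decomposes it, and finally $J$ (a sparse union of paths) is folded back in and its leftovers consolidated exactly as before, again leaving one short path.

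I expect two obstacles to dominate. The first is keeping the gluing step from cascading: each splice must preserve the path property and the lengths of every piece it meets, and the iteration must terminate even when there are very many short pieces — this is exactly what forces $d_\ell$ to be a large function of $\ell$, large enough that every vertex retains enough untouched cycles to service all repairs routed through it. The second, and I think the genuinely delicate one, is the parity-correction step: one must pair up $W$ and route the pairs so that no single cut of $G$ is overloaded, since an arbitrary minimal $T$-join can be concentrated inside one small cut and destroy connectivity; it is the edge-connectivity margin needed to carry this out robustly, on top of the $4$ the engine already demands, that accounts for the gap between the constants $4$ and $24$.
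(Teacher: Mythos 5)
Your proposal is, at its core, the ``naive'' eulerian-tour-cutting strategy that the paper explicitly names and rejects in its introduction, and the gaps in your argument are precisely the obstructions they circumvent with different machinery. The central difficulty is conflict control, and it shows up in two places you do not resolve. First, short cycles: a cycle $C$ of length $r<\ell$ in your cycle decomposition is, in its entirety, one of your ``leftover'' pieces, but a cycle is not a path --- you cannot simply splice it onto the active piece at a vertex $v$, because the result revisits $v$. To place $C$'s edges into genuine paths, an eulerian tour must interleave $C$'s edges with other edges so that they are spread over at least two $\ell$-blocks, which is exactly a non-local conflict constraint and not something your greedy one-piece-at-a-time splicing can handle. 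Second, even for long cycles, the leftover arc of length $L\bmod\ell$ ending at a prescribed vertex $x$ is nearly determined (two choices of orientation); your claim that among the $d_G(x)/2$ cycles through $x$ you can always find one whose short arc at $x$ avoids the $<\ell$ vertices of the active piece is asserted, not proved, and there is no obvious counting argument: a vertex $y$ of high degree near $x$ can lie on short arcs of a constant fraction of those cycles. The paper's way around both issues is to \emph{first} replace every edge of the graph by an edge-disjoint path of length in $[\ell,\ell+3]$ with provably small ``conflict ratio'' (Theorem~\ref{ll1} and Lemma~\ref{llp1}, built via a probabilistic cone construction in Theorem~\ref{dense}), and only \emph{then} take an eulerian tour; at that stage a length-$\ell$ block spans at most two consecutive long paths, so conflicts become purely local and are resolved by Jackson's theorem on compatible eulerian tours (Theorem~\ref{eulerian}). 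That lifting step is the key idea and is entirely absent from your plan.

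The parity step also does not close. You remove a $T$-join $J$ to make the remainder eulerian and propose to ``fold $J$ back in,'' but the paths composing a $T$-join have uncontrolled lengths, so they are not compatible with an $\ell$-path decomposition, and folding them in reintroduces the very parity defect you removed. The paper instead builds, on each side of a maximum cut, an $(\ell,2\ell)$-path-tree (Lemma~\ref{l2l}); because its paths already have length $\ell$ or $2\ell$, one can delete a suitable subforest to fix the parity of every vertex at once, and the deleted paths go directly into the final decomposition. This is where most of the connectivity budget goes: $24$-edge-connected yields a $12$-edge-connected bipartite $G'$ across the max cut, a balanced $6$-arc-strong orientation (Proposition~\ref{prop: arc-strong}), six arc-disjoint arborescences (Proposition~\ref{prop: disj-arbor}), and hence three $2$-edge-connected ``skeletons,'' two for the parity-fixing path-trees and one for Lemma~\ref{llp1}. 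Your ``$24=4+20$'' split does not correspond to any step that can be carried out. So while your high-level instinct --- handle the eulerian case with a cycle/tour argument, then reduce to it by a parity fix --- matches the paper's architecture in spirit, both halves of your plan have genuine, unpatched gaps that the paper's path-graph and path-tree machinery exist specifically to fill.
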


\noindent In particular, our proof of Theorem~\ref{theorem:24} yields a third proof of the path case of Conjecture~\ref{conjecture:barat-thomassen}.
It is also important mentioning that this proof is, in terms of approach, quite different from the one from~\cite{BMOW14}.

Let us, as well, again emphasize that the main point in the statement of Theorem~\ref{theorem:24} is that the required edge-connectivity, namely~$24$,
is constant and not dependent on the path length $\ell$. Concerning the optimal value as $f(2)$ mentioned in
Conjecture~\ref{conjecture:min-degree-general} (which is bounded above by~$24$, following Theorem~\ref{theorem:24}), 
a lower bound on it is~$3$ as there exist $2$-edge-connected graphs with arbitrarily large minimum
degree admitting no $P_\ell$-decomposition for some~$\ell$. To be convinced of this statement, just consider the following
construction. Start from the $2$-edge-connected graph $G$ depicted in~Figure~\ref{figure:example-3edgeco}, which admits
no $P_9$-decomposition. To now obtain a $2$-edge-connected graph with arbitrarily large minimum degree $d$ from it, just consider any $2$-edge-connected graph $H$ 
with sufficiently large minimum degree (i.e. at least~$d$) and verifying $|E(H)| \equiv 7 \pmod 9$. Then consider any 
vertex $v$ of $G$ with small degree, and add two edges from $v$ to a new copy of $H$. Repeating this transformation as long as 
necessary, we get a new graph which is still $2$-edge-connected, with minimum degree at least~$d$ and whose size is a multiple of~$9$
(due to the size of $G$ and $H$), but with no $P_9$-decomposition -- otherwise, it can be easily checked that $G$ would admit a $P_9$-decomposition,
a contradiction. 
% 
% Proving Theorem~\ref{theorem:24} with~$3$ instead of~$24$ can actually be done using a different proof scheme than the
% one used herein -- for that reason, the proof of this result, which uses some of the results and ideas from 
% the current paper, will appear in a later paper.

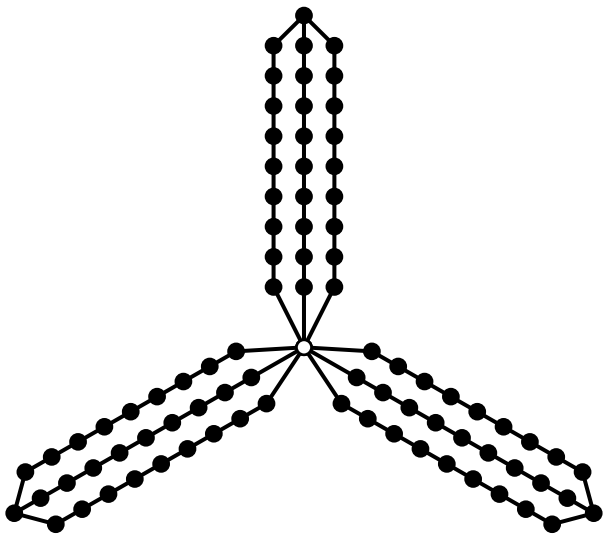
\begin{figure}[t]
\centering
\begin{tikzpicture}[inner sep=0.7mm,scale=0.4]
\node[draw, circle, line width=1pt, fill=white](u) at (0,0){};
\node[draw, circle, line width=1pt, fill=black] at (-1,2){};
\node[draw, circle, line width=1pt, fill=black] at (-1,3){};
\node[draw, circle, line width=1pt, fill=black] at (-1,4){};
\node[draw, circle, line width=1pt, fill=black] at (-1,5){};
\node[draw, circle, line width=1pt, fill=black] at (-1,6){};
\node[draw, circle, line width=1pt, fill=black] at (-1,7){};
\node[draw, circle, line width=1pt, fill=black] at (-1,8){};
\node[draw, circle, line width=1pt, fill=black] at (-1,9){};
\node[draw, circle, line width=1pt, fill=black] at (-1,10){};
\draw[-,line width=1.5pt] (-1,2) -- (-1,3) -- (-1,4) -- (-1,5) -- (-1,6) -- (-1,7) -- (-1,8) -- (-1, 9) -- (-1,10);
\node[draw, circle, line width=1pt, fill=black] at (0,2){};
\node[draw, circle, line width=1pt, fill=black] at (0,3){};
\node[draw, circle, line width=1pt, fill=black] at (0,4){};
\node[draw, circle, line width=1pt, fill=black] at (0,5){};
\node[draw, circle, line width=1pt, fill=black] at (0,6){};
\node[draw, circle, line width=1pt, fill=black] at (0,7){};
\node[draw, circle, line width=1pt, fill=black] at (0,8){};
\node[draw, circle, line width=1pt, fill=black] at (0,9){};
\node[draw, circle, line width=1pt, fill=black] at (0,10){};
\node[draw, circle, line width=1pt, fill=black] at (0,11){};
\draw[-,line width=1.5pt] (0,2) -- (0,3) -- (0,4) -- (0,5) -- (0,6) -- (0,7) -- (0,8) -- (0, 9) -- (0,10);
\node[draw, circle, line width=1pt, fill=black] at (1,2){};
\node[draw, circle, line width=1pt, fill=black] at (1,3){};
\node[draw, circle, line width=1pt, fill=black] at (1,4){};
\node[draw, circle, line width=1pt, fill=black] at (1,5){};
\node[draw, circle, line width=1pt, fill=black] at (1,6){};
\node[draw, circle, line width=1pt, fill=black] at (1,7){};
\node[draw, circle, line width=1pt, fill=black] at (1,8){};
\node[draw, circle, line width=1pt, fill=black] at (1,9){};
\node[draw, circle, line width=1pt, fill=black] at (1,10){};
\draw[-,line width=1.5pt] (1,2) -- (1,3) -- (1,4) -- (1,5) -- (1,6) -- (1,7) -- (1,8) -- (1, 9) -- (1,10);
\draw[-,line width=1.5pt] (-1,10) -- (0,11);
\draw[-,line width=1.5pt] (0,10) -- (0,11);
\draw[-,line width=1.5pt] (1,10) -- (0,11);
\draw[-,line width=1.5pt] (0,0) -- (-1,2);
\draw[-,line width=1.5pt] (0,0) -- (0,2);
\draw[-,line width=1.5pt] (0,0) -- (1,2);
%%%%%%%%%%%%%%%%%%
%%%%%%%%%%%%%%%%%%
%%%%%%%%%%%%%%%%%%
%%%%%%%%%%%%%%%%%%
\begin{scope}[rotate=120]
\node[draw, circle, line width=1pt, fill=black] at (-1,2){};
\node[draw, circle, line width=1pt, fill=black] at (-1,3){};
\node[draw, circle, line width=1pt, fill=black] at (-1,4){};
\node[draw, circle, line width=1pt, fill=black] at (-1,5){};
\node[draw, circle, line width=1pt, fill=black] at (-1,6){};
\node[draw, circle, line width=1pt, fill=black] at (-1,7){};
\node[draw, circle, line width=1pt, fill=black] at (-1,8){};
\node[draw, circle, line width=1pt, fill=black] at (-1,9){};
\node[draw, circle, line width=1pt, fill=black] at (-1,10){};
\draw[-,line width=1.5pt] (-1,2) -- (-1,3) -- (-1,4) -- (-1,5) -- (-1,6) -- (-1,7) -- (-1,8) -- (-1, 9) -- (-1,10);
\node[draw, circle, line width=1pt, fill=black] at (0,2){};
\node[draw, circle, line width=1pt, fill=black] at (0,3){};
\node[draw, circle, line width=1pt, fill=black] at (0,4){};
\node[draw, circle, line width=1pt, fill=black] at (0,5){};
\node[draw, circle, line width=1pt, fill=black] at (0,6){};
\node[draw, circle, line width=1pt, fill=black] at (0,7){};
\node[draw, circle, line width=1pt, fill=black] at (0,8){};
\node[draw, circle, line width=1pt, fill=black] at (0,9){};
\node[draw, circle, line width=1pt, fill=black] at (0,10){};
\node[draw, circle, line width=1pt, fill=black] at (0,11){};
\draw[-,line width=1.5pt] (0,2) -- (0,3) -- (0,4) -- (0,5) -- (0,6) -- (0,7) -- (0,8) -- (0, 9) -- (0,10);
\node[draw, circle, line width=1pt, fill=black] at (1,2){};
\node[draw, circle, line width=1pt, fill=black] at (1,3){};
\node[draw, circle, line width=1pt, fill=black] at (1,4){};
\node[draw, circle, line width=1pt, fill=black] at (1,5){};
\node[draw, circle, line width=1pt, fill=black] at (1,6){};
\node[draw, circle, line width=1pt, fill=black] at (1,7){};
\node[draw, circle, line width=1pt, fill=black] at (1,8){};
\node[draw, circle, line width=1pt, fill=black] at (1,9){};
\node[draw, circle, line width=1pt, fill=black] at (1,10){};
\draw[-,line width=1.5pt] (1,2) -- (1,3) -- (1,4) -- (1,5) -- (1,6) -- (1,7) -- (1,8) -- (1, 9) -- (1,10);
\draw[-,line width=1.5pt] (-1,10) -- (0,11);
\draw[-,line width=1.5pt] (0,10) -- (0,11);
\draw[-,line width=1.5pt] (1,10) -- (0,11);
\draw[-,line width=1.5pt] (0,0) -- (-1,2);
\draw[-,line width=1.5pt] (0,0) -- (0,2);
\draw[-,line width=1.5pt] (0,0) -- (1,2);
\end{scope}
%%%%%%%%%%%%%%%%%%
%%%%%%%%%%%%%%%%%%
%%%%%%%%%%%%%%%%%%
%%%%%%%%%%%%%%%%%%
\begin{scope}[rotate=-120]
\node[draw, circle, line width=1pt, fill=black] at (-1,2){};
\node[draw, circle, line width=1pt, fill=black] at (-1,3){};
\node[draw, circle, line width=1pt, fill=black] at (-1,4){};
\node[draw, circle, line width=1pt, fill=black] at (-1,5){};
\node[draw, circle, line width=1pt, fill=black] at (-1,6){};
\node[draw, circle, line width=1pt, fill=black] at (-1,7){};
\node[draw, circle, line width=1pt, fill=black] at (-1,8){};
\node[draw, circle, line width=1pt, fill=black] at (-1,9){};
\node[draw, circle, line width=1pt, fill=black] at (-1,10){};
\draw[-,line width=1.5pt] (-1,2) -- (-1,3) -- (-1,4) -- (-1,5) -- (-1,6) -- (-1,7) -- (-1,8) -- (-1, 9) -- (-1,10);
\node[draw, circle, line width=1pt, fill=black] at (0,2){};
\node[draw, circle, line width=1pt, fill=black] at (0,3){};
\node[draw, circle, line width=1pt, fill=black] at (0,4){};
\node[draw, circle, line width=1pt, fill=black] at (0,5){};
\node[draw, circle, line width=1pt, fill=black] at (0,6){};
\node[draw, circle, line width=1pt, fill=black] at (0,7){};
\node[draw, circle, line width=1pt, fill=black] at (0,8){};
\node[draw, circle, line width=1pt, fill=black] at (0,9){};
\node[draw, circle, line width=1pt, fill=black] at (0,10){};
\node[draw, circle, line width=1pt, fill=black] at (0,11){};
\draw[-,line width=1.5pt] (0,2) -- (0,3) -- (0,4) -- (0,5) -- (0,6) -- (0,7) -- (0,8) -- (0, 9) -- (0,10);
\node[draw, circle, line width=1pt, fill=black] at (1,2){};
\node[draw, circle, line width=1pt, fill=black] at (1,3){};
\node[draw, circle, line width=1pt, fill=black] at (1,4){};
\node[draw, circle, line width=1pt, fill=black] at (1,5){};
\node[draw, circle, line width=1pt, fill=black] at (1,6){};
\node[draw, circle, line width=1pt, fill=black] at (1,7){};
\node[draw, circle, line width=1pt, fill=black] at (1,8){};
\node[draw, circle, line width=1pt, fill=black] at (1,9){};
\node[draw, circle, line width=1pt, fill=black] at (1,10){};
\draw[-,line width=1.5pt] (1,2) -- (1,3) -- (1,4) -- (1,5) -- (1,6) -- (1,7) -- (1,8) -- (1, 9) -- (1,10);
\draw[-,line width=1.5pt] (-1,10) -- (0,11);
\draw[-,line width=1.5pt] (0,10) -- (0,11);
\draw[-,line width=1.5pt] (1,10) -- (0,11);
\draw[-,line width=1.5pt] (0,0) -- (-1,2);
\draw[-,line width=1.5pt] (0,0) -- (0,2);
\draw[-,line width=1.5pt] (0,0) -- (1,2);
\end{scope}

\node[draw, circle, line width=1pt, fill=white](u) at (0,0){};
\end{tikzpicture}
\caption{Part of the construction for obtaining $2$-edge-connected graphs with arbitrarily large minimum degree but no $P_\ell$-decomposition for some $\ell$.}
\label{figure:example-3edgeco}
\end{figure}

\medskip

Very roughly, the proof of Theorem~\ref{theorem:24}
goes as follows. When the graph $G$ has an eulerian tour $\mathcal{E}$, a natural strategy to obtain
a $P_\ell$-decomposition of $G$ is to cut $\mathcal{E}$ into consecutive $\ell$-paths. Of course we
may be unsuccessful in doing so since several consecutive edges of $\mathcal{E}$ may be \textit{conflicting},
that is have common vertices, hence inducing a cycle. Note however that if every edge of $\mathcal{E}$ (and hence
of $G$) is already a path of length at least~$\ell$, then,
cutting pieces along $\mathcal{E}$, only its consecutive paths can be conflicting -- hence bringing the notion
of conflict to a very local setting. Following this easy idea, the proof consists in expressing $G$ as
a \textit{$(\geq \ell)$-path-graph} (i.e. a system of edge-disjoint paths of length at least $\ell$
covering all edges) $H$ with low conflicts between its paths, then making $H$ eulerian somehow while keeping
low conflicts, and eventually deducing a conflictless eulerian tour that can eventually be safely cut into $\ell$-paths.

One side fact resulting from our proof scheme is that when $G$ is eulerian,
making $H$ eulerian requires less edge-connectivity. This remark, and additional
arguments, allow us to also prove the following result.

\begin{theorem} \label{theorem:4eulerian2}
For every integer $\ell \geq 2$, there exists $d_\ell$ such that every $4$-edge-connected eulerian graph
 with minimum degree at least~$d_\ell$ has a decomposition
into paths of length $\ell$ and an additional path of 
length at most $\ell$.
\end{theorem}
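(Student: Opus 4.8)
\medskip
\noindent\textbf{Proof proposal for Theorem~\ref{theorem:4eulerian2}.}
The plan is to run the three-part scheme outlined above, using the Eulerian hypothesis precisely to skip its expensive middle part. Fix $\ell\geq 2$, let $d_\ell$ be a large constant to be chosen at the end, and let $G$ be $4$-edge-connected and Eulerian with $\delta(G)\geq d_\ell$. The first step is to represent $G$ as a \emph{$(\geq\ell)$-path-graph} $H$: an edge-disjoint family $\mathcal{P}=\{P_1,\dots,P_m\}$ of paths of $G$, each of length at least $\ell$, together covering $E(G)$, such that the associated \emph{conflict} is small --- informally, that the pairs of paths which should not be traversed consecutively (namely those sharing a vertex other than a prospective common endpoint) are, at every vertex, only a small fraction of the locally available transitions. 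Such a representation is produced essentially as in the proof of Theorem~\ref{theorem:24}: one pulls long paths out of $G$ greedily, the bound $\delta(G)\geq d_\ell\gg\ell$ providing enough room to keep each extracted path almost everywhere ``fresh'', and a modest amount of edge-connectivity --- this is the part for which $4$ suffices --- is spent to keep the leftover graph connected until it has bounded maximum degree, after which it is absorbed into the $P_i$. This step uses no Eulerian hypothesis.

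Now contract each $P_i$ to a single edge joining its two ends and call the resulting multigraph $H$ (abusing notation). Since $\mathcal{P}$ covers $E(G)$ and $G$ is connected, $H$ is connected; and here is the point of the Eulerian hypothesis: $H$ is \emph{automatically} Eulerian. Indeed, fix a vertex $v$; each $P_i$ uses an even number of edges at $v$ when $v$ is internal to $P_i$ and an odd number (namely one) when $v$ is an endpoint of $P_i$, so summing over $\mathcal{P}$ and using that $d_G(v)$ is even shows that an even number of paths of $\mathcal{P}$ end at $v$, i.e.\ $d_H(v)$ is even. Thus all degrees of $H$ are even and no rerouting is needed to make $H$ Eulerian. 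By contrast, in the proof of Theorem~\ref{theorem:24} this is exactly the stage at which one must pair up the odd-degree vertices of $H$ and route correction paths of length $\geq\ell$ between them without spoiling the conflict bound, which is what forces the much larger edge-connectivity there.

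It remains to turn $H$ into a genuine $P_\ell$-decomposition. Using that the conflict is small, choose an Eulerian tour of $H$ in which no two conflicting paths are consecutive: at each vertex of $H$ one selects a transition (a pairing of the path-ends there) avoiding the few forbidden pairs, which is possible because the forbidden pairs form a small fraction of the transitions, and these local choices can be merged into a single closed trail by a standard transition-system argument. Lift this tour to a closed Eulerian walk $\mathcal{E}$ of $G$ and cut $\mathcal{E}$ into consecutive segments of length $\ell$, with one final segment of length at most $\ell$. Since every $P_i$ has length at least $\ell$, each segment is contained in the union of at most two consecutive paths $P_i,P_{i+1}$ of $\mathcal{P}$; as these do not conflict, they share only the vertex where $\mathcal{E}$ glues them, so the segment is internally vertex-disjoint and is therefore a path. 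This gives the desired decomposition into paths of length $\ell$ plus one path of length at most $\ell$.

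The main obstacle is to carry out the first and third steps with edge-connectivity only $4$ rather than with the comfortable slack one would have by assuming a large path-dependent connectivity: one must verify that the cheap parity bookkeeping above is not bought at the price of a worse conflict parameter, and --- the truly delicate point --- that the transition selection in the third step never gets stuck at the low-degree vertices of $H$, where a forbidden transition could become forced. Handling the bounded number of such bad configurations, if necessary by locally re-routing a constant number of paths of $\mathcal{P}$ through $G$ (which is where $4$-edge-connectivity, as opposed to mere $2$-edge-connectivity, is used), together with the parity gain of the Eulerian case, is what constitutes the ``additional arguments'' needed beyond the proof of Theorem~\ref{theorem:24}.
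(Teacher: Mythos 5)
Your overall scheme and the observation that the eulerian hypothesis makes the parity bookkeeping free are correct and match the paper: since the $(\geq\ell)$-path-graph $H$ covers $E(G)$, each vertex $v$ has $d_{\tilde H}(v)\equiv d_G(v)\pmod 2$, so $\tilde H$ is automatically even. The transition-system/Jackson argument and the final cutting of a non-conflicting eulerian tour into $\ell$-segments are also as in the paper. However, there is a genuine gap in the middle: your claim that ``since $\mathcal{P}$ covers $E(G)$ and $G$ is connected, $H$ is connected'' is false. Connectivity of the underlying graph $\underline H$ does not imply connectivity of the contracted multigraph $\tilde H$. A simple counterexample: let $G$ be the union of the two edge-disjoint $3$-paths $a_0a_1a_2a_3$ and $b_0a_2a_1b_3$; $G$ is connected, but $\tilde H$ consists of the two disjoint edges $a_0a_3$ and $b_0b_3$. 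So making $\tilde H$ even is cheap, but making it connected is not, and an eulerian tour requires both.

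Repairing this is exactly where the paper spends its $4$-edge-connectivity, and in a way quite different from what you describe. The paper does not ``pull long paths out greedily'' and use connectivity ``to keep the leftover graph connected until it has bounded maximum degree.'' Instead, a balanced $2$-arc-strong orientation (Nash-Williams, Proposition~\ref{prop: arc-strong}) plus Edmonds' arborescence theorem (Proposition~\ref{prop: disj-arbor}) extract a $1/2$-sparse $2$-edge-connected spanning subgraph $G_1$; the path-tree machinery of Section~\ref{section:path-trees} (Lemma~\ref{lemma:13-tree} through Lemma~\ref{llp1}) then grows a spanning $(\ge\ell)$-path-tree inside $G_1$ and merges it with a low-conflict $(\ell,\ell+1)$-path-graph covering the dense remainder, so that the resulting path-graph $H$ is \emph{connected} by construction, with $\operatorname{conf}(H)<1/2(\ell+10)$. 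The low-conflict path-graph itself is produced not greedily but by the probabilistic cone/random-matching argument of Theorem~\ref{dense} (Local Lemma + McDiarmid), which you elide. Your final paragraph's worry about ``low-degree vertices of $H$'' and ad hoc re-routing using $4$-edge-connectivity is not an issue the paper has to face: the conflict-ratio bound together with Jackson's theorem already handles the transition selection uniformly, and the $4$ is consumed entirely by the connectivity step above.
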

% 
% \noindent Once again, the required edge-connectivity in Theorem~\ref{theorem:4eulerian2} is not optimal
% as it can be lowered to~$2$ -- which is optimal because of eulerianity. This as well
% will be proved in a later paper as our arguments for proving this are totally different from the ones used herein.

\medskip

This paper is organized as follows. We start by introducing and recalling preliminary tools and results
in Section~\ref{section:fractions}. The notion of path-graphs and some properties of these objects are
then introduced in Section~\ref{section:path-graphs}. Particular path-graphs, which we call \textit{path-trees},
needed to repair eulerianity of path-graphs are then introduced and studied in Section~\ref{section:path-trees}. With
all notions and results in hands, we then prove Theorems~\ref{theorem:24} and~\ref{theorem:4eulerian2} in Section~\ref{section:main}. 

%%%%%%%%%%%%%%%%%%%%%%%%%%%%%%%%%%%%%%%%%%%%%%%%%%%%%%%%%%%%%%%%%%%%%
%%%%%%%%%%%%%%%%%%%%%%%%%%%%%%%%%%%%%%%%%%%%%%%%%%%%%%%%%%%%%%%%%%%%%
%%%%%%%%%%%%%%%%%%%%%%%%%%%%%%%%%%%%%%%%%%%%%%%%%%%%%%%%%%%%%%%%%%%%%
%%%%%%%%%%%%%%%%%%%%%%%%%%%%%%%%%%%%%%%%%%%%%%%%%%%%%%%%%%%%%%%%%%%%%
%%%%%%%%%%%%%%%%%%%%%%%%%%%%%%%%%%%%%%%%%%%%%%%%%%%%%%%%%%%%%%%%%%%%%
%%%%%%%%%%%%%%%%%%%%%%%%%%%%%%%%%%%%%%%%%%%%%%%%%%%%%%%%%%%%%%%%%%%%%
%%%%%%%%%%%%%%%%%%%%%%%%%%%%%%%%%%%%%%%%%%%%%%%%%%%%%%%%%%%%%%%%%%%%%
%%%%%%%%%%%%%%%%%%%%%%%%%%%%%%%%%%%%%%%%%%%%%%%%%%%%%%%%%%%%%%%%%%%%%
%%%%%%%%%%%%%%%%%%%%%%%%%%%%%%%%%%%%%%%%%%%%%%%%%%%%%%%%%%%%%%%%%%%%%
%%%%%%%%%%%%%%%%%%%%%%%%%%%%%%%%%%%%%%%%%%%%%%%%%%%%%%%%%%%%%%%%%%%%%
%%%%%%%%%%%%%%%%%%%%%%%%%%%%%%%%%%%%%%%%%%%%%%%%%%%%%%%%%%%%%%%%%%%%%
%%%%%%%%%%%%%%%%%%%%%%%%%%%%%%%%%%%%%%%%%%%%%%%%%%%%%%%%%%%%%%%%%%%%%
%%%%%%%%%%%%%%%%%%%%%%%%%%%%%%%%%%%%%%%%%%%%%%%%%%%%%%%%%%%%%%%%%%%%%
%%%%%%%%%%%%%%%%%%%%%%%%%%%%%%%%%%%%%%%%%%%%%%%%%%%%%%%%%%%%%%%%%%%%%
%%%%%%%%%%%%%%%%%%%%%%%%%%%%%%%%%%%%%%%%%%%%%%%%%%%%%%%%%%%%%%%%%%%%%
%%%%%%%%%%%%%%%%%%%%%%%%%%%%%%%%%%%%%%%%%%%%%%%%%%%%%%%%%%%%%%%%%%%%%

\section{Tools and preliminary results} \label{section:fractions}

Let $H=(V,F)$ be a spanning subgraph 
of a graph $G=(V,E)$. Let $\alpha$ be some real number in $[0,1]$.
We say that $H$ is {\em $\alpha $-sparse} in $G$ if $d_H(v)\leq \alpha d_G(v)$ 
for all vertices $v$ of $G$. Conversely,
we say that $H$ is {\em $\alpha $-dense} in $G$ if $d_H(v)\geq \alpha d_G(v)$ for all 
vertices $v$ of $G$. 
We will also heavily depend on subgraphs of $G$ which are both (roughly) $\alpha $-sparse and 
$\alpha $-dense. We say that $H$ is an \emph{$\alpha$-fraction} of $G$
if $\alpha d_G(v) - 10 \ell^{\ell}  \leq  d_H(v) \leq \alpha d_G(v) + 10 \ell^{\ell}$. 

%It may of course be the case that,
%because of divisibility issue, there does not exist any $\alpha$-fraction of $G$. For this reason, we will
%allow $\alpha$-fractions to approximate the desired degrees within a reasonable constant error range. 

Given an (improper) edge-coloring $\phi$ of some graph $G$ and a color~$i$, for every vertex $v$ of $G$ we denote by $d_i(v)$
the number of $i$-colored edges incident to $v$. We call $\phi$ \textit{nearly equitable} if, for every vertex $v$ and every 
pair of colors $i \neq j$, we have $|d_i(v)-d_j(v)|\le2$.
We can now recall a result of de Werra (cf. \cite{dewerra}, Theorem 8.7), and its corollary concerning $1/k$-fractions.

\begin{proposition}\label{werra}
Let $k \geq 1$. Every graph has a nearly equitable improper $k$-edge-coloring.
\end{proposition}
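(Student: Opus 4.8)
The plan is to prove Proposition~\ref{werra} by an extremal (potential-function) argument, repeatedly performing local recolourings on two-coloured subgraphs until no vertex is badly unbalanced. Among all $k$-edge-colourings $\phi$ of $G$, I would fix one that minimizes
\[
\Phi(\phi) \;=\; \sum_{v \in V(G)} \sum_{c=1}^{k} d_c(v)^2 ;
\]
since $\Phi$ is a non-negative integer and $G$ is finite this minimum is attained. It then suffices to check that a minimizer is nearly equitable, so suppose it is not: there are a vertex $v$ and colours $i \neq j$ with $d_i(v) \geq d_j(v) + 3$.

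The move is the following. Let $C$ be the connected component of $v$ in the subgraph of $G$ formed by the edges coloured $i$ or $j$. Recolouring edges of $C$ using only $i$ and $j$ leaves $d_c(w)$ unchanged for every $c \notin \{i,j\}$ and leaves $d_i(w)+d_j(w)$ unchanged for every $w$, so only the terms of $\Phi$ indexed by a vertex of $C$ and a colour in $\{i,j\}$ can move; moreover, by convexity of $x \mapsto x^2$, for each such vertex $w$ the value $d_i(w)^2+d_j(w)^2$ is smallest exactly when $|d_i(w)-d_j(w)|$ is as small as the fixed sum $d_i(w)+d_j(w)$ allows. So I want to recolour $C$ so that essentially every vertex becomes balanced with respect to $\{i,j\}$. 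For this I decompose $E(C)$ into edge-disjoint trails — a single closed Eulerian trail based at $v$ if every degree of $C$ is even, and otherwise the minimum number $|O|/2$ of (necessarily open) trails, where $O$ is the set of odd-degree vertices of $C$ (add a perfect matching on $O$, take an Eulerian circuit, delete the matching) — and then recolour the edges of each trail alternately $i,j,i,j,\dots$. Whenever a trail runs through a vertex along two consecutive edges, those edges receive different colours, so such a passage contributes equally to $d_i$ and $d_j$; hence the imbalance at a vertex comes only from trail endpoints there. In a minimum trail decomposition every even-degree vertex of $C$ is an endpoint of no trail and every odd-degree vertex is an endpoint of exactly one, so after the recolouring $|d_i(w)-d_j(w)| \leq 1$ for all $w \in C$; the only exception is the Eulerian case with $|E(C)|$ odd, where one vertex — chosen to be $v$ — ends with imbalance $2$, but parity then forces the original imbalance at $v$ to have been at least $4$. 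Either way, $\Phi$ does not grow at any vertex of $C$ and strictly drops at $v$, contradicting minimality; so the minimizer is nearly equitable.

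I expect the main obstacle to be the trail-decomposition bookkeeping in the non-Eulerian case: one must argue that a minimum trail decomposition of a connected graph with $2p$ odd-degree vertices uses exactly $p$ trails, hence that the ``endpoint budget'' at each vertex is exactly one at odd-degree vertices and zero at even-degree vertices, which is precisely what keeps the alternating recolouring balanced everywhere and forbids closed trails when $p \geq 1$. The degenerate parity case (a closed Eulerian trail of odd length, where the best possible imbalance at one vertex is $2$) also needs the small observation that this only ever concerns the vertex $v$ we are trying to fix, whose imbalance was then even and at least $4$, so $\Phi$ still drops. The remaining ingredients — integrality and non-negativity of $\Phi$, the locality of a recolouring confined to the colours $i$ and $j$, and the per-vertex comparison via convexity of $x\mapsto x^2$ — are routine.
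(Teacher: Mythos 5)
The paper does not prove Proposition~\ref{werra} at all: it simply invokes it as a known theorem of de Werra, citing Theorem~8.7 of~\cite{dewerra}. So there is no in-paper proof to compare against. That said, your argument is a correct, self-contained proof, and it is essentially de Werra's classical argument: minimize the potential $\sum_v \sum_c d_c(v)^2$, and if some vertex $v$ has $d_i(v) \geq d_j(v)+3$, recolor the $\{i,j\}$-component $C$ of $v$ along a trail decomposition so that imbalance drops. The key bookkeeping points you flag are handled correctly. Adding a perfect matching $M$ on the odd-degree set $O$ and deleting $M$ from an Eulerian circuit yields $|O|/2$ segments; since $M$ is a matching no two deleted edges share a vertex, so each segment is a genuinely open trail with distinct endpoints, and each vertex of $O$ is an endpoint of exactly one trail while even-degree vertices are endpoints of none. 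Alternating the colours along each trail then gives $|d_i(w)-d_j(w)| \leq 1$ for every $w \in C$, which is optimal given the parity of $d_i(w)+d_j(w)$, so the potential does not increase anywhere. In the closed-Eulerian odd-length case the base point $v$ ends with imbalance exactly $2$, but there $d_i(v)+d_j(v)$ is even and hence the original imbalance was even and at least $4$, so the potential still strictly drops at $v$. The convexity observation $x^2 + y^2 = \tfrac12\bigl((x+y)^2 + (x-y)^2\bigr)$ makes the per-vertex comparison precise, and finiteness of the nonnegative integer-valued potential closes the argument. This is sound and matches the standard textbook proof of de Werra's theorem.
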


\begin{proposition} \label{halfgraph}
Let $k \geq 1$. Every graph $G=(V,E)$ has a subgraph $H=(V,F)$
 such that $|d_H(v)- d_G(v)/k|\le 2$
for every vertex $v$. 
\end{proposition}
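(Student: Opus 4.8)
The plan is to read this off directly from Proposition~\ref{werra}, which is exactly the tool it was stated to prepare. First I would apply Proposition~\ref{werra} with the given integer~$k$ to the graph~$G$, obtaining a nearly equitable improper $k$-edge-coloring~$\phi$ with color classes indexed by $1,\dots,k$. I would then simply take $F$ to be the set of edges receiving color~$1$ and set $H=(V,F)$; no further construction is needed.

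The remaining work is a one-line rounding estimate at each vertex. Fix $v\in V$ and write $d_i(v)$ for the number of color-$i$ edges at~$v$, so that $d_G(v)=\sum_{i=1}^{k}d_i(v)$ and hence $d_G(v)/k$ is the arithmetic mean of $d_1(v),\dots,d_k(v)$. In particular $d_G(v)/k$ lies in the interval $[m,M]$, where $m=\min_i d_i(v)$ and $M=\max_i d_i(v)$. Since $\phi$ is nearly equitable we have $M-m\le 2$, and since $d_H(v)=d_1(v)$ also lies in $[m,M]$, both $d_H(v)$ and $d_G(v)/k$ sit inside an interval of length at most~$2$; therefore $|d_H(v)-d_G(v)/k|\le 2$, which is the claimed bound.

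I do not expect any genuine obstacle: all the difficulty is hidden in Proposition~\ref{werra} (de Werra's theorem), and the fact that the coloring is \emph{improper} is precisely what makes it available with no structural hypothesis on~$G$. The only point worth a moment's care is checking that picking a single color class — rather than a union of several — is the right move, which is clear once one observes that each color class is already a rough $1/k$-fraction of every degree. This is also the statement that, applied iteratively or with varying~$k$, will be used later to produce the $\alpha$-fractions defined in this section, so the clean constant~$2$ (independent of $k$ and of $G$) is exactly what is wanted.
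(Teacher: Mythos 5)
Your proof is correct and is exactly the route the paper intends: the paper presents Proposition~\ref{halfgraph} as an immediate corollary of de Werra's theorem (Proposition~\ref{werra}) without spelling out the details, and taking a single color class of a nearly equitable $k$-edge-coloring, then observing that both $d_1(v)$ and the mean $d_G(v)/k$ lie in the length-$\le 2$ interval $[\min_i d_i(v),\max_i d_i(v)]$, is precisely the intended one-line argument.
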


We now recall two results on oriented graphs. The first of these is a result of Nash-Williams (see \cite{NW60})
implying that any graph with large edge-connectivity admits a balanced orientation with
large arc-connectivity. In the following, a digraph $D$ is \emph{$k$-arc-strong} if the removal of any set
of at most $k-1$ arcs leaves $D$ strongly-connected. 

\begin{proposition} \label{prop: arc-strong}
Every $2k$-edge-connected multigraph has
an orientation $D$ such that $D$ is $k$-arc-strong and such that $|d^-(v) - d^+(v)| \leq 1$ for every vertex $v$.
\end{proposition}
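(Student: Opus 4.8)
The plan is to isolate the Eulerian case, where the statement is transparent, and then reduce the general case to it.

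\emph{The Eulerian case.} Suppose first that $G$ is $2k$-edge-connected and Eulerian. Orient the edges of $G$ consecutively along a closed Euler trail, obtaining an orientation $D$. Then $d^-(v)=d^+(v)$ for every vertex $v$, so the balance condition holds with equality. To see that $D$ is $k$-arc-strong, fix any $\emptyset\neq S\subsetneq V$. Summing $d^+(v)=d^-(v)$ over $v\in S$ shows that $D$ has as many arcs leaving $S$ as entering $S$; since $G$ is Eulerian and $2k$-edge-connected, $|\partial_G(S)|$ is even and at least $2k$; hence $D$ has exactly $\tfrac12|\partial_G(S)|\geq k$ arcs leaving $S$. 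As this holds for every such $S$, $D$ is $k$-arc-strong by the arc-version of Menger's theorem.

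\emph{Reduction to the Eulerian case.} In general, let $T$ be the set of odd-degree vertices of $G$; it has even size, so we may add a perfect matching $M$ on $T$ (a set of $|T|/2$ new edges, possibly parallel to existing ones). The multigraph $G\cup M$ is Eulerian and, containing $G$, is still $2k$-edge-connected, so by the Eulerian case it has a $k$-arc-strong orientation $D'$ with $d^+_{D'}(v)=d^-_{D'}(v)$ everywhere. Let $D$ be obtained from $D'$ by deleting the arcs of $M$; this is an orientation of $G$. Each vertex meets at most one edge of $M$, so $d^+-d^-$ changes by at most $1$ at each vertex, whence $|d^+_D(v)-d^-_D(v)|\leq 1$ for all $v$, as required.

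\emph{The main obstacle.} What is not immediate---and is where the substance of Nash-Williams' theorem resides---is that deleting the $|M|$ arcs may destroy $k$-arc-strength: a near-minimum cut $\partial_G(S)$ could lose several arcs of $M$ all crossing it in the same direction, leaving fewer than $k$ arcs of $D$ leaving $S$. One remedy is to choose $M$ and the Euler trail on $G\cup M$ so that each arc of $M$ sits neutrally across every tight cut, which is possible because the minimum cuts of $G$ form a highly structured (cactus-like) family but takes some work. Alternatively one repairs $D$ directly: if some $S$ has fewer than $k$ outgoing arcs then, since $|\partial_G(S)|\geq 2k$, it has more than $k$ incoming arcs, and one reverses a suitable directed cycle or directed path to move a crossing arc from the in-direction to the out-direction; reversing a directed cycle leaves every $d^+-d^-$ unchanged, while reversing a directed path affects $d^+-d^-$ only at its two ends, so one reverses only paths from a vertex of imbalance $+1$ to one of imbalance $-1$, thereby preserving the balance. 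Proving that such exchanges always exist and that the process terminates---via submodularity of the out-degree function and uncrossing of the deficient sets---is the crux; since this is exactly the cited result of Nash-Williams, one may instead simply invoke it.
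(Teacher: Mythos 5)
The paper offers no proof of this proposition: it is invoked as Nash-Williams' weak orientation theorem and cited to \cite{NW60}, so there is no in-paper argument for your attempt to be compared against. On its own merits, your Eulerian case is correct and is the standard short argument: an Euler-tour orientation gives $d^+(v)=d^-(v)$ at every vertex, so for any $\emptyset\neq S\subsetneq V$ the arcs leaving $S$ and entering $S$ are equinumerous, each equal to $\tfrac12|\partial_G(S)|\geq k$.

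The reduction of the general case to the Eulerian one, however, has a genuine gap, which you flag yourself. After deleting the arcs of $M$, a tight cut $\partial_G(S)$ of size exactly $2k$ crossed by $m\geq 2$ matching edges can end up with as few as $k-m/2$ outgoing arcs: the Euler-tour orientation of $G\cup M$ sends $k+m/2$ arcs out and $k+m/2$ in across that cut, and all $m$ deleted arcs could have been among the outgoing ones. Neither of the two repairs you sketch is carried out, and each requires substantial further work (the cactus structure of minimum cuts in the first, an uncrossing/submodularity argument for the existence and termination of the path reversals in the second); as you observe, this is precisely where the substance of Nash-Williams' theorem resides, so invoking that theorem at this point makes the argument circular as a proof. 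Since the paper itself uses the proposition as a black box, a citation to \cite{NW60} is the expected move here; your write-up is best read as a clean exposition of the easy Eulerian case together with an honest identification of where the real difficulty lies, rather than as a self-contained proof.
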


The second result we recall is due to Edmonds (see \cite{E73}) and expresses a condition for a digraph to admit
many arc-disjoint rooted arborescences. In the statement, an \emph{out-arborescence}
of a digraph $D$ refers to a rooted spanning tree $T$ of $D$ whose arcs are oriented
in such a way that the root has in-degree 0, and every other vertex has in-degree~1. 

\begin{proposition} \label{prop: disj-arbor}
A directed multigraph with a special vertex $z$
has $k$ arc-disjoint out-arborescences rooted at $z$ if and only if 
the number of arc-disjoint paths between $z$ and any vertex is at least $k$.
\end{proposition}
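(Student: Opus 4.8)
\noindent The plan is to recognise this as Edmonds' branching theorem and to prove it by peeling off out-arborescences one at a time, after first rephrasing the hypothesis as a cut condition. By Menger's theorem for digraphs, the maximum number of arc-disjoint directed paths from $z$ to a vertex $v$ equals $\min\{d^-(S): v\in S\subseteq V(D)\setminus\{z\}\}$, where $d^-(S)$ denotes the number of arcs of $D$ entering $S$; hence the hypothesis is equivalent to the cut condition $d^-(S)\ge k$ for every nonempty $S\subseteq V(D)\setminus\{z\}$. The ``only if'' direction is then immediate, since each of $k$ arc-disjoint out-arborescences rooted at $z$ contains a directed path from $z$ to $v$ and these $k$ paths are arc-disjoint. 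So everything reduces to showing that this cut condition guarantees $k$ arc-disjoint out-arborescences rooted at $z$.

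\medskip

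I would prove this by induction on $k$, the case $k=0$ being vacuous, the inductive step resting on the following \emph{peeling lemma}: if $D$ satisfies the cut condition with parameter $k\ge1$, then $D$ has an out-arborescence $F$ rooted at $z$ such that $D\setminus F$ still satisfies the cut condition with parameter $k-1$. Granting the lemma, the induction hypothesis applied to $D\setminus F$ supplies $k-1$ further arc-disjoint out-arborescences, which together with $F$ make $k$.

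\medskip

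To prove the peeling lemma I would grow $F$ greedily, keeping a sub-out-arborescence rooted at $z$ on a vertex set $U$ (initially $U=\{z\}$) and maintaining the invariant that $d^-_{D\setminus F}(S)\ge k-1$ for every nonempty $S\subseteq V(D)\setminus\{z\}$, which holds trivially at the start. While $U\neq V(D)$, adding an arc $(u,w)$ with $u\in U$ and $w\notin U$ lowers $d^-_{D\setminus F}(S)$ only for sets $S$ with $w\in S\not\ni u$, so it suffices to find such an arc for which no such $S$ is currently \emph{tight} (that is, has $d^-_{D\setminus F}(S)=k-1$). That such a ``safe'' arc always exists is the crux. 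Since $S\mapsto d^-_{D\setminus F}(S)$ is submodular, two crossing tight sets have tight intersection and tight union; put $W=V(D)\setminus U$. If no tight set meets $W$, then any arc from $U$ to $W$ is safe, and one exists because $F$ has no arc with head in $W$, whence $d^-_{D\setminus F}(W)=d^-_D(W)\ge k\ge1$. Otherwise pick an inclusionwise-minimal tight set $Z$ meeting $W$; being tight, $Z$ is entered by $F$ and so also meets $U$, and since $d^-_{D\setminus F}(Z\cap W)=d^-_D(Z\cap W)\ge k>k-1=d^-_{D\setminus F}(Z)$ there must be an arc of $D\setminus F$ from $Z\cap U$ to $Z\cap W$; minimality of $Z$ forces every tight set containing the head of this arc to contain its tail as well, so the arc is safe. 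Iterating until $U=V(D)$ yields the desired $F$.

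\medskip

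The main obstacle is precisely this last point, the existence of a safe arc: it is the only place where the submodular uncrossing of the in-degree function and the minimal choice of $Z$ come into play, and where the bookkeeping of tight sets has to be done with care — notably the fact that every tight set of $D\setminus F$ is entered by the current partial arborescence. Everything else is routine induction together with Menger's theorem.
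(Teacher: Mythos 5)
The paper does not prove this proposition at all; it is stated as a recalled result and attributed to Edmonds~\cite{E73}, so there is no in-paper argument to compare against. Your proof is a correct and complete proof of Edmonds' branching theorem, and it is the by-now-standard argument usually credited to Lov\'asz rather than Edmonds' original one: rephrase the hypothesis via Menger as the cut condition $d^-(S)\ge k$ for all nonempty $S\subseteq V\setminus\{z\}$, induct on $k$ by peeling off one out-arborescence while preserving the cut condition at level $k-1$, and grow that arborescence greedily, using submodularity of $d^-_{D\setminus F}$ to uncross tight sets and the inclusionwise-minimal tight set meeting $W=V\setminus U$ to certify a safe arc. I checked the two places that are easy to get wrong and both are handled correctly: a tight set $Z$ must meet $U$ because $d^-_D(Z)-d^-_{D\setminus F}(Z)\ge 1$ forces an $F$-arc (whose head lies in $U$) to enter $Z$; and $d^-_{D\setminus F}(Z\cap W)=d^-_D(Z\cap W)\ge k>k-1=d^-_{D\setminus F}(Z)$ together with the elementary count of arcs by tail location produces an arc from $Z\cap U$ to $Z\cap W$, whose safety follows from uncrossing against the minimal choice of $Z$. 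The only stylistic remark is that the proposition's wording ``paths between $z$ and any vertex'' should be read as directed paths from $z$, which you do implicitly; stating that explicitly before invoking Menger would remove the only ambiguity.
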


We end this section recalling probabilistic tools we will need in the next sections (refer e.g. to \cite{MR02} for more details).
The first of these is the well-known Local Lemma.

\begin{proposition}[Lov\'{a}sz Local Lemma] \label{prop: Lovasz}
Let $A_1,..., A_n$ be a finite set of events in some probability space $\Omega$,
with $\PP[A_i] \leq p$ for all $i$. Suppose that each $A_i$ is mutually independent of 
all but at most $d$ other events $A_j$. If $4pd < 1$, then $\Pr[\cap_{i=1}^{n} \overline{A_i}] > 0$.
\end{proposition}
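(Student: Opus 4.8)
The plan is to deduce the statement from the standard auxiliary claim, proved by induction on $|S|$: \emph{for every index $i$ and every set $S\subseteq\{1,\dots,n\}\setminus\{i\}$ with $\PP\bigl[\bigcap_{j\in S}\overline{A_j}\bigr]>0$, one has $\PP\bigl[A_i\mid\bigcap_{j\in S}\overline{A_j}\bigr]\le 2p$}. The base case $S=\emptyset$ is immediate, since the conditional probability is then just $\PP[A_i]\le p\le 2p$. Note that $4pd<1$ forces $4p\le 4pd<1$, hence $p<1/4$; I will carry along implicitly the fact that no conditioning event ever collapses to probability $0$, which is guaranteed precisely by the denominator estimate below.

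For the inductive step, I would split $S=S_1\cup S_2$, where $S_1$ collects the $j\in S$ for which $A_i$ is \emph{not} mutually independent of $A_j$, so that $|S_1|\le d$. Writing $B=\bigcap_{j\in S_2}\overline{A_j}$ and $C=\bigcap_{j\in S_1}\overline{A_j}$, the definition of conditional probability gives
\[
\PP\!\left[A_i \mid C\cap B\right]=\frac{\PP[A_i\cap C\mid B]}{\PP[C\mid B]}\le\frac{\PP[A_i\mid B]}{\PP[C\mid B]}=\frac{\PP[A_i]}{\PP[C\mid B]}\le\frac{p}{\PP[C\mid B]},
\]
where the middle equality uses that $A_i$ is mutually independent of the family $\{A_j:j\in S_2\}$. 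To bound the denominator I enumerate $S_1=\{j_1,\dots,j_r\}$ and apply the chain rule,
\[
\PP[C\mid B]=\prod_{k=1}^{r}\Bigl(1-\PP\bigl[A_{j_k}\mid\overline{A_{j_1}}\cap\dots\cap\overline{A_{j_{k-1}}}\cap B\bigr]\Bigr),
\]
where each conditioning event is an intersection of at most $(k-1)+|S_2|\le |S_1|-1+|S_2|=|S|-1<|S|$ of the $\overline{A_j}$'s, so the induction hypothesis bounds every subtracted term by $2p$. By Bernoulli's inequality and $4pd<1$ this yields $\PP[C\mid B]\ge(1-2p)^{r}\ge 1-2pr\ge 1-2pd>\tfrac12$, and the displayed chain of inequalities gives $\PP[A_i\mid C\cap B]\le p/\tfrac12=2p$, closing the induction.

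To finish, I would expand the target probability by the chain rule,
\[
\PP\!\left[\bigcap_{i=1}^{n}\overline{A_i}\right]=\prod_{i=1}^{n}\Bigl(1-\PP\bigl[A_i\mid\overline{A_1}\cap\dots\cap\overline{A_{i-1}}\bigr]\Bigr)\ge(1-2p)^{n}>0,
\]
applying the auxiliary claim to each factor and using $p<1/4$.

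The only genuinely delicate point is the well-foundedness together with the positivity bookkeeping: one must ensure that every conditioning event appearing in the argument has positive probability — which is exactly what the inductive bound $\PP[C\mid B]>1/2$ provides — and that the relevant index sets strictly decrease in size at each recursive invocation, which holds since each conditioning event in the chain-rule product above involves at most $|S|-1$ of the events (the case $r=0$ being vacuous). Everything else — the split into dependent and independent parts, Bernoulli's inequality, the chain-rule expansions — is routine. (One could alternatively invoke the Moser--Tardos resampling argument, but for the bare existence statement asserted here the classical conditional-probability induction is the shortest route.)
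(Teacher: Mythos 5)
The paper does not prove this proposition at all: it is recalled verbatim as a standard tool, with a citation to Molloy and Reed \cite{MR02}, so there is no ``paper's proof'' to compare your argument against. Your write-up is the classical conditional-probability induction for the symmetric Local Lemma, and it is essentially correct: the split of $S$ into the (at most $d$) dependent indices $S_1$ and the independent remainder $S_2$, the reduction $\PP[A_i\mid C\cap B]\le \PP[A_i]/\PP[C\mid B]$ via $\PP[A_i\cap C\mid B]\le\PP[A_i\mid B]=\PP[A_i]$, the chain-rule expansion of $\PP[C\mid B]$ whose conditioning sets all have strictly fewer than $|S|$ indices, and the Bernoulli bound $(1-2p)^r\ge 1-2pd>1/2$ under $4pd<1$ are all in order, and the positivity bookkeeping you flag does indeed propagate bottom-up once $p<1/2$.

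One small imprecision worth fixing: the step ``$4pd<1$ forces $4p\le 4pd$, hence $p<1/4$'' tacitly uses $d\ge 1$. When $d=0$ the hypothesis $4pd<1$ is vacuous and gives no bound on $p$; the conclusion then reduces to $\prod_i(1-\PP[A_i])>0$, which requires $p<1$ and is false for $p=1$. This degenerate case is customarily excluded (or one states the lemma as $ep(d+1)\le 1$, which handles $d=0$ cleanly), and it is irrelevant to the paper's application where $d$ is enormous, but a rigorous write-up should either assume $d\ge 1$ or note the exclusion explicitly. Everything else in your argument is sound.
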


We will also require the use of the following 
concentration inequality due to McDiarmid \cite{M02} (see also~\cite{MR02}). 
In what follows, a \emph{choice} is defined to be 
%either (a) the outcome of a trial or (b) 
the position that a particular element gets mapped to in a permutation.

\begin{proposition}[McDiarmid's Inequality (simplified version)]
Let $X$ be a non-negative random variable, not identically 0, which is determined by 
%$n$ independent trials $T_1,..., T_n$ and 
$m$ independent permutations $\Pi_1,..., \Pi_m$. If there exist $d, r >0$ such that 
\begin{itemize}
 %\item changing the outcome of any one trial can affect $X$ by at most $d$; 
 \item interchanging two elements in any one permutation can affect $X$ by at most $d$, and
 \item for any $s>0$, if $X \geq s$ then there is a set of at most $rs$ choices whose outcomes certify that $X\geq s$,
\end{itemize}
then for any $0 \leq \lambda \leq \EE[X]$,
$$ \PP\left[|X-\EE[X]| > \lambda + 60d \sqrt{r\EE[X]}\right] \leq 4 e^{-\tfrac{\lambda^2}{8d^2r\EE[X]}}.$$
\end{proposition}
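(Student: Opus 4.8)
The plan is to establish McDiarmid's concentration inequality as a simplified corollary of the standard bounded-differences inequality for permutations. The statement we must prove is Proposition (McDiarmid's Inequality, simplified version): given a non-negative random variable $X$, not identically $0$, determined by $m$ independent permutations $\Pi_1,\dots,\Pi_m$, such that swapping two elements in any single permutation changes $X$ by at most $d$, and such that whenever $X\geq s$ there is a certificate consisting of at most $rs$ choices, then for any $0\leq \lambda\leq\EE[X]$ we have $\PP[|X-\EE[X]|>\lambda+60d\sqrt{r\EE[X]}]\leq 4e^{-\lambda^2/(8d^2r\EE[X])}$. Since the paper cites McDiarmid's survey for this, the "proof" should really be a derivation showing how the clean two-bullet hypothesis feeds into the known machinery; I would not attempt to reprove the underlying Azuma-type martingale estimate from scratch.

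First I would recall the general McDiarmid inequality for a random variable $X$ satisfying the Lipschitz condition (each transposition moves $X$ by at most $d$) together with the certificate / ``$r$-certifiable'' condition: it states that for suitable $t$,
\[
\PP[X \le \EE[X]-t]\le e^{-t^2/(8d^2 r\EE[X])}
\qquad\text{and}\qquad
\PP[X \ge \EE[X]+t+60d\sqrt{r\EE[X]}}\le e^{-t^2/(8d^2 r \EE[X])}.
\]
Then the plan is simply to combine the lower and upper tail bounds. Taking $t=\lambda$, the upper-tail event $\{X\ge\EE[X]+\lambda+60d\sqrt{r\EE[X]}\}$ has probability at most $e^{-\lambda^2/(8d^2r\EE[X])}$; the lower-tail event $\{X\le\EE[X]-\lambda-60d\sqrt{r\EE[X]}\}\subseteq\{X\le\EE[X]-\lambda\}$ has probability at most $e^{-\lambda^2/(8d^2r\EE[X])}$ as well. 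The two-sided event $\{|X-\EE[X]|>\lambda+60d\sqrt{r\EE[X]}\}$ is contained in the union of these two, so by a union bound its probability is at most $2e^{-\lambda^2/(8d^2r\EE[X])}\le 4e^{-\lambda^2/(8d^2r\EE[X])}$, the extra factor of $2$ being a harmless slack that absorbs the asymmetry between the two tails and any off-by-constant issues in the constants $8,60$ inherited from the reference. The hypothesis $\lambda\le\EE[X]$ is exactly what is needed to apply the underlying bound in its stated range (and to ensure $\EE[X]>0$, which holds since $X$ is non-negative and not identically $0$).

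The main obstacle here is not really mathematical depth but bookkeeping: one has to make sure the normalisation of the certificate condition (a set of at most $rs$ \emph{choices}, where a choice is the image of a single element under one of the permutations, as defined just before the statement) matches the normalisation used in McDiarmid's original theorem, and that the Lipschitz constant $d$ is measured per transposition rather than per choice. I would state the underlying theorem from \cite{M02,MR02} in precisely the normalisation we use, note that our $X$ satisfies its hypotheses verbatim with the same parameters $d$ and $r$, and then the concentration bound follows by the union-bound argument above. No probabilistic subtlety beyond this translation is involved, so the proof is short.
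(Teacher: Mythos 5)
The paper does not actually prove this proposition: it is recalled verbatim from McDiarmid's paper and the Molloy--Reed book immediately after the citation, so there is no in-paper proof to compare against. Your proposal treats it the same way --- deferring the hard martingale/Azuma-type estimate to the cited source and merely observing that the stated two-sided bound with the factor $4$ follows from the corresponding one-sided tail bounds by a union bound --- which is a correct reading and matches the paper's approach of citing the result without proof.
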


\section{Path-graphs} \label{section:path-graphs}

Let $G=(V,E)$ be a graph. A {\em path-graph} $H$ \textit{on} $G$ is a couple $(V,{\cal P})$ 
where ${\cal P}$ is a set of edge-disjoint paths of $G$.
The graph $\underline{H}=(V,F)$, where $F$ contains the edges of paths in ${\cal P}$, is called the \textit{underlying graph} of $H$. 
If $F=E$, then $H$ is called a \textit{path-decomposition} of $G$.
Two edge-disjoint paths of $G$ sharing an end $v$ are said \textit{conflicting} if they also intersect in 
another vertex different from $v$.
Equivalently, we say that two paths of $H$ issued from a same vertex are conflicting
if the corresponding paths in $\underline{H}$ are conflicting.

We denote by $\tilde{H}$ the multigraph on 
vertex set $V$ and edge set the multiset containing a pair $uv$ for each path from $u$ to $v$ in $\cal P$ (if $\cal P$ contains several paths from $u$ to $v$, we add as many 
edges $uv$). We now transfer the usual definitions of graphs to path-graphs. The {\em degree} 
of a vertex $v$ in $H$, denoted $d_H(v)$, is the degree (with multiplicity) of $v$ in $\tilde{H}$.
We say that $H$ is {\em connected} if $\tilde{H}$ is connected, that $H$ is \textit{eulerian} if $\tilde{H}$ is eulerian, 
and that $H$ is a {\em path-tree} if $\tilde{H}$ is a tree (even if the paths of $\cal P$ pairwise intersect).
From a tour in $\tilde{H}$, we naturally get a corresponding \textit{tour} in $H$. 
Such a tour is said \textit{non-conflicting} if every two of its consecutive paths are non-conflicting.

We need also to speak of the length of the paths in $\cal P$. Let us say that $H$ is 
an {\em $\ell$-path-graph} if all paths in $\cal P$ have length $\ell$, a {\em $(\ge \ell)$-path-graph} if all 
paths in $\cal P$ have lengths at least $\ell$, an {\em $(\ell_1,\ell_2,...)$-path-graph} if all paths in 
$\cal P$ have lengths among $\{\ell_1, \ell_2, ...\}$, and an \emph{$[\ell, \ell + i]$}-path-graph if 
all paths in $\cal P$ have length in the interval $[\ell, \ell + i]$.

In general, the paths of a path-graph $H=(V,\cal P)$ can pairwise intersect, and we would hence like to measure
how much.
For every vertex $v$, let ${\cal P} _H(v)$ be the set of paths incident with $v$ in $H$. 
The {\em conflict ratio} of $v$ is $$\mbox {conf}(v):=\frac{\max_{w\ne v} \big| \{P\in {\cal P} _H(v) : w \in P\}\big|}{d_H(v)}.$$
Now, regarding $H$, we set ${\rm conf}_G(H):=\max_{v}\mbox {conf}(v)$.
When the graph $G$ is clear from the context, we will often omit the subscript in the notation. Clearly we always have ${\rm conf}(H) \leq 1$.

\medskip

With all the terminology above in hand, we can now prove (or recall) properties of path-graphs. We start
by recalling that, as desired, eulerian path-graphs with somewhat low conflicts have non-conflicting
eulerian tours. This matter was actually already considered by Jackson (cf. \cite{J93}, Theorem 6.3), whose
result implies it for path-graphs with small conflict ratio.

For a vertex $v$, let $E_v$ be the set of edges incident to $v$. A \emph{generalised transition system} $S$
for a graph $G$ is a set of functions $\{S_v\}_{v \in V(G)}$ such that $S_v: E_v \to 2^{E_v}$
and whenever $e_1 \in S_v(e_2)$, we have that $e_2 \in S_v(e_1)$. We say that an eulerian tour $\mathcal{E}$ is \emph{compatible}
with $S$ if for all $v \in V(G)$, whenever $e_1 \in S_v(e_2)$ it follows that 
$e_1$ and $e_2$ are not consecutive edges in $\mathcal{E}$.

\begin{theorem}[Jackson \cite{J93}] \label{thm: Jackson}
Let $S$ be a generalised transition system for an eulerian graph $G$. Suppose that for each vertex $v \in V(G)$
and $e \in E_v$, we have that $|S_v(e)| \leq \frac{1}{2}d(v) - 2$. Then $G$ 
has an eulerian tour compatible with $S$.
\end{theorem}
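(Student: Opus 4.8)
The plan is to recast the statement in the language of \emph{transition systems} and then run a minimal-counterexample argument in which disconnectedness is repaired by a local modification at a single vertex. Since $G$ is eulerian, every degree is even, so at each vertex $v$ the edge set $E_v$ admits perfect matchings; call a choice, for every vertex $v$, of a perfect matching $T_v$ of $E_v$ a \emph{transition system} (read $\{e,f\}\in T_v$ as ``the tour enters $v$ along $e$ and leaves along $f$''). Any transition system decomposes $E(G)$ into edge-disjoint closed trails, and it corresponds to an eulerian tour of $G$ exactly when this decomposition is a single trail; it is \emph{allowed} by $S$ precisely when no pair $\{e,f\}$ of any $T_v$ has $f\in S_v(e)$. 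So it suffices to produce an allowed transition system that is connected.

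First I would check that allowed transition systems exist at all. At a vertex $v$, the forbidden pairs form a graph on the vertex set $E_v$ of maximum degree at most $\frac12 d(v)-2$; hence its complement, the graph $A_v$ of \emph{allowed} pairs at $v$, has minimum degree at least $\frac12 d(v)+1>\frac12|E_v|$, so by Dirac's theorem $A_v$ has a Hamilton cycle and, $|E_v|$ being even, a perfect matching. Choosing such a matching at every vertex gives an allowed transition system, typically with many trails. Now fix an allowed transition system $T$ \emph{minimising} the number of closed trails, and suppose for contradiction that it has $r\ge 2$ of them.

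Because $G$ is connected, some vertex $v$ lies on at least two of these trails. Freeze the matchings $T_w$ for all $w\neq v$: following edges using these transitions, but stopping as soon as one reaches $v$, cuts $G$ into segments, each joining two distinct edges of $E_v$, and this gives a perfect matching $M$ of $E_v$. The trails of $T$ through $v$ are then in bijection with the cycles of the $2$-regular graph $M\cup T_v$, whereas the trails avoiding $v$ are insensitive to any change of $T_v$. The key step is the following: the allowed graph $A_v$ contains a perfect matching $T_v'$, edge-disjoint from $M$, such that $M\cup T_v'$ is a single Hamilton cycle of $E_v$. Granting this, replacing $T_v$ by $T_v'$ keeps the system allowed and collapses all trails through $v$ into one, so the number of trails strictly decreases, contradicting minimality. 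Hence $r=1$, and $T$ is the desired eulerian tour compatible with $S$.

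The main obstacle is exactly that key step, a ``Hamilton cycle through a prescribed perfect matching'' statement: if $A$ is a graph on an even vertex set with minimum degree more than half the order, and $M$ is an arbitrary perfect matching of that vertex set, then $A$ has a perfect matching $T$ with $M\cup T$ hamiltonian. This is a standard strengthening of Dirac's theorem (provable, for instance, by a P\'osa-type rotation argument, or by contracting the edges of $M$ and finding a Hamilton cycle in the contracted graph subject to the ``enter through one end, leave through the other'' constraint), and it is here that one spends the slack $\frac12 d(v)+1$ in the minimum degree of $A_v$, which in turn is exactly the ``$-2$'' in the hypothesis $|S_v(e)|\le\frac12 d(v)-2$. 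I would also point out that a cruder repair, swapping just two transitions at $v$, does \emph{not} always suffice to merge two prescribed trails, which is why one re-optimises the whole matching $T_v$ at once.
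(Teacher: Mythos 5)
The paper does not prove this theorem at all; it is quoted verbatim from Jackson~\cite{J93} and used as a black box, so there is no internal proof to compare your argument against. Taken on its own terms, your proof is correct. The degree count is right: the allowed-pairs graph $A_v$ on the set $E_v$ has minimum degree at least $(d(v)-1)-(\tfrac12 d(v)-2)=\tfrac12 d(v)+1$. The existence of an allowed transition system then follows from Dirac. The connectivity of $G$ does force, when there are $r\ge 2$ trails, some vertex $v$ to lie on at least two of them (the graph whose vertices are trails and whose edges record a shared vertex of $G$ is connected). The auxiliary matching $M$ of $E_v$ obtained from the segments between consecutive returns to $v$ is well defined: the walk started from any $e\in E_v$ is deterministic and reversible, hence terminates back at $v$ and induces a fixed-point-free involution of $E_v$. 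And the reduction of ``merge all trails through $v$'' to ``find a perfect matching $T_v'\subseteq E(A_v)$, disjoint from $M$, with $M\cup T_v'$ a single Hamilton cycle of $E_v$'' is exact, including the automatic disjointness of $T_v'$ and $M$ whenever $d(v)\ge4$. The load-bearing lemma you invoke is H\"aggkvist's strengthening of Dirac's theorem: on $n$ vertices, minimum degree at least $\tfrac{n}{2}+1$ (for $n$ even) guarantees a Hamilton cycle through any prescribed perfect matching of the vertex set, after adjoining the matching edges if absent. The threshold is sharp --- the triangular prism ($n=6$, $\delta=3=\tfrac n2$) has no Hamilton cycle through its three rungs --- so the $+1$ of slack you have is precisely what the ``$-2$'' in Jackson's hypothesis is buying, which is a nice sanity check. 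Your side remark that a single transition swap at $v$ does not always merge two prescribed trails is also correct (one can construct examples already at $d(v)=8$ with $|S_v(e)|\le 2$ where no one-swap works), and it justifies re-optimising the whole matching $T_v$ at once rather than swapping pairwise.
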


From Theorem \ref{thm: Jackson}, the following result is immediate.

\begin{theorem}\label{eulerian}
Every eulerian $[\ell, \ell +3]$-path-graph $H$ with ${\rm conf}(H) \leq 1/2(\ell + 10)$
has a non-conflicting eulerian tour.
\end{theorem}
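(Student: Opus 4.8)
The plan is to deduce Theorem~\ref{eulerian} directly from Jackson's Theorem~\ref{thm: Jackson} by packaging the conflict structure of $H$ into a generalised transition system on the underlying graph $\underline{H}$. First I would set $G := \underline{H}$, which is eulerian since $H$ is (the tour in $\tilde H$ lifts to a closed walk traversing every edge of $\underline H$ exactly once). For a vertex $v$ and an edge $e \in E_v$, let $P(e)$ be the unique path of $\mathcal{P}$ containing $e$, and note that if $v$ is an interior vertex of $P(e)$ then $e$ is not available to be "cut" at $v$ — so I would define $S_v(e)$ to forbid, at each vertex, exactly those transitions that either stay inside a single path of $\mathcal{P}$ or pass between two paths of $\mathcal{P}$ that are conflicting at $v$. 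Concretely, for $e_1, e_2 \in E_v$ put $e_1 \in S_v(e_2)$ iff ($P(e_1) = P(e_2)$) or ($v$ is an endpoint of both $P(e_1)$ and $P(e_2)$, $P(e_1)\neq P(e_2)$, and $P(e_1)$ and $P(e_2)$ are conflicting). This is visibly symmetric, so $S=\{S_v\}$ is a genuine generalised transition system.

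Next I would verify the bound $|S_v(e)| \le \tfrac12 d_G(v) - 2$ for every $v$ and every $e\in E_v$. Here $d_G(v) = d_{\underline H}(v)$, and I would relate it to $d_H(v)$, the number of paths of $\mathcal{P}$ incident with $v$: each such path contributes $1$ to $d_{\underline H}(v)$ if $v$ is an endpoint and $2$ if $v$ is interior, so $d_{\underline H}(v) \ge d_H(v)$, with room to spare. Fix $e\in E_v$ and let $P=P(e)$. The edges counted by $S_v(e)$ are: (i) the edge(s) of $P$ at $v$ other than $e$ — at most one more such edge; and (ii) for each path $Q\neq P$ of $\mathcal{P}$ that has $v$ as an endpoint and conflicts with $P$ at $v$, the single edge of $Q$ at $v$. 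The number of type-(ii) paths is at most $\max_{w\neq v}|\{Q\in\mathcal{P}_H(v): w\in Q\}|$ once we observe that two conflicting paths through $v$ share a further common vertex $w$; actually more carefully, every $Q$ conflicting with the fixed $P$ shares with $P$ some vertex $w_Q\neq v$, but these $w_Q$ need not be equal, so the clean bound is simply $|S_v(e)| \le 1 + (\text{number of paths of }\mathcal{P}\text{ incident to }v) \le 1 + d_H(v)$ is too weak; instead I would bound the type-(ii) count by $2\,\mathrm{conf}(v)\,d_H(v)$ (the factor $2$ absorbing that a path of length up to $\ell+3$ meeting $P$ can be charged via $\le \ell+3$ interior vertices — one refines this using the conflict ratio applied to $P$'s vertex set). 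Using $\mathrm{conf}(H)\le \tfrac{1}{2(\ell+10)}$ and $d_H(v) \ge \delta_H$, together with $d_G(v)\ge d_H(v)$ and the fact that paths have length in $[\ell,\ell+3]$, the quantity $|S_v(e)|$ is bounded by roughly $(\ell+3)\,\mathrm{conf}(v)\,d_H(v)+1 \le \tfrac{\ell+3}{2(\ell+10)}d_H(v)+1$, which is comfortably below $\tfrac12 d_G(v)-2$ provided $d_H(v)$ (hence $\delta(G)$) is large enough — and when $\delta(G)$ is bounded we handle $G$ ad hoc or absorb it into constants. Granting the hypothesis, Theorem~\ref{thm: Jackson} yields an eulerian tour $\mathcal{E}$ of $\underline H$ compatible with $S$.

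Finally I would translate $\mathcal{E}$ back into a non-conflicting eulerian tour of $H$. Because $\mathcal{E}$ never uses a forbidden transition of type (i), consecutive edges of $\mathcal{E}$ lying in the same path $P\in\mathcal{P}$ can only occur when they are genuinely consecutive along $P$; hence $\mathcal{E}$ restricted to the edges of each $P$ traverses $P$ monotonically from one end to the other without interruption, so $\mathcal{E}$ decomposes canonically into the paths of $\mathcal{P}$ in some cyclic order — i.e.\ it is an eulerian tour of $H$ in the sense of the paper (a tour in $\tilde H$). And because $\mathcal{E}$ never uses a forbidden transition of type (ii), wherever two distinct paths $P, Q$ of $\mathcal{P}$ appear consecutively in this cyclic order they share only the common endpoint and are therefore non-conflicting. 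Thus the tour is non-conflicting, as required.

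The main obstacle I anticipate is the bookkeeping in the second paragraph: getting the constant right in the passage from "paths of $\mathcal{P}$ conflicting with $P$ at $v$" to a multiple of $\mathrm{conf}(v)\,d_H(v)$, since a single path $P$ of length up to $\ell+3$ can be met by conflicting paths at many different internal vertices, and one must sum the conflict contributions over those vertices while keeping $d_G(v)$ versus $d_H(v)$ straight (a path interior to $v$ contributes $2$ to $d_G(v)$ but $1$ to $d_H(v)$, which actually helps). This is exactly why the hypothesis carries the $\ell+10$ rather than a bare constant, and why only an $[\ell,\ell+3]$-window (bounded path lengths) is allowed; the slack between $\ell+3$ and $\ell+10$, plus the $-2$ in Jackson's bound, is what the calculation must fit inside.
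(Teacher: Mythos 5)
There is a genuine gap, and it is not the bookkeeping you flagged: you apply Jackson's theorem to the underlying graph $\underline{H}$, whereas it must be applied to the multigraph $\tilde{H}$ whose edges are the paths of $\mathcal{P}$. Your transition system puts $e_1\in S_v(e_2)$ whenever $P(e_1)=P(e_2)$. But if $v$ is an interior vertex of a path $P$ and $e_1,e_2$ are the two edges of $P$ at $v$, this is precisely the transition that an eulerian tour of $H$ \emph{must} use at $v$ in order to traverse $P$ without interruption; a tour of $\underline{H}$ compatible with your $S$ therefore \emph{never} pairs two consecutive edges of any path, so it cannot decompose into the paths of $\mathcal{P}$ in cyclic order, contrary to what your last paragraph asserts (the claim that $\mathcal{E}$ ``traverses $P$ monotonically from one end to the other'' contradicts the constraint you imposed). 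Reversing the constraint does not save the argument either, because a generalised transition system can only \emph{forbid} transitions, not force them: to guarantee that the tour goes straight through every interior vertex $v$ of $P$ you would have to forbid, for each $P$-edge $e$ at $v$, all $d_{\underline H}(v)-2$ transitions to non-$P$-edges, vastly exceeding the budget $\tfrac12 d_{\underline H}(v)-2$ in Jackson's hypothesis. So the route through $\underline{H}$ is blocked in principle.

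The paper's proof sidesteps all of this by working in $\tilde{H}$: each path $P\in\mathcal{P}_H(v)$ is a single edge there, $d_{\tilde H}(v)=d_H(v)$, and the problem of forcing the tour along a path never arises. One sets $S_v(P)$ to be the set of paths of $\mathcal{P}_H(v)$ conflicting with $P$, notes that $P$ has at most $\ell+3$ vertices other than $v$, and that for each such vertex $w$ at most $\mathrm{conf}(H)\,d_H(v)$ of the paths incident with $v$ pass through $w$; hence
$|S_v(P)|\le(\ell+3)\,\mathrm{conf}(H)\,d_H(v)\le \tfrac{\ell+3}{2(\ell+10)}\,d_H(v)\le \tfrac12 d_H(v)-2$,
and Jackson's theorem gives a compatible eulerian tour of $\tilde{H}$, i.e.\ a non-conflicting eulerian tour of $H$. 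The slack you worried about ($\ell+10$ in the denominator, the $[\ell,\ell+3]$ window) is exactly what makes this one-line count close; the hard step was choosing the right graph to feed into Jackson's theorem.
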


\begin{proof}
Let $P \in {\cal P} _H(v)$. The number of paths of ${\cal P} _H(v)$ conflicting with $P$ is at most
$\frac{1}{2(\ell + 10)} (\ell + 3) d_H(v) \leq \frac{1}{2}d_H(v) - 2$. The result now follows from
Jackson's theorem.
\end{proof}

We now prove that every graph with large enough minimum degree can be expressed as a $(\geq \ell)$-path-graph meeting particular properties.

\begin{theorem}\label{dense}
Let $\ell$ be a positive integer, and $\varepsilon$ be an arbitrarily small positive real number. There exists $L$ such that if $G=(V,E)$ is a graph with minimum degree at least $L$, then 
there is an $\ell$-path-graph $H$ on $G$ with ${\rm conf}(H) \leq \varepsilon$, and $d_H(v)/d_G(v)\in \big[ \frac{1- \varepsilon}{\ell},\frac{1+ \varepsilon}{\ell}\big]$ and $d_{G\backslash \underline{H}}(v)\le \varepsilon d_H(v)$ for all vertices $v$.
\end{theorem}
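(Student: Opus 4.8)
The plan is to build $H$ by a probabilistic argument, taking a random collection of $\ell$-paths and cleaning it up with the Lovász Local Lemma. First I would use Proposition~\ref{halfgraph} (iterated, or a one-shot nearly-equitable coloring from Proposition~\ref{werra}) to reduce to a regular-ish situation: split $E(G)$ into roughly $d_G(v)/\ell$ "color classes" per vertex so that, heuristically, each class should contribute one $\ell$-path through $v$. More robustly, I would first pass to an orientation: by Proposition~\ref{prop: arc-strong} orient $G$ so that in- and out-degrees differ by at most $1$, so every vertex has roughly $d_G(v)/2$ out-arcs. Then at each vertex I randomly pair up incoming and outgoing arcs (a uniformly random perfect matching between $\text{in}(v)$ and $\text{out}(v)$, discarding one unmatched arc if the degree is odd); following these transitions decomposes (almost all of) $E(G)$ into a set of directed closed trails, and then I cut each long trail into consecutive pieces of length exactly $\ell$, discarding the short leftover segment of each trail (length $< \ell$). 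The discarded edges form the "error" set $R := E(G) \setminus \underline{H}$.

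The two things to control are: (i) $\mathrm{conf}(H) \le \varepsilon$, and (ii) $d_R(v) \le \varepsilon d_H(v)$ for all $v$, together with the degree window $d_H(v)/d_G(v) \in [\frac{1-\varepsilon}{\ell}, \frac{1+\varepsilon}{\ell}]$. For (i): a path $P$ of $H$ incident to $v$ visits at most $\ell$ other vertices, and for a fixed $w \ne v$, the event "$w \in P$ for the particular path $P$ leaving $v$ along a given arc" is determined by the random transition choices along a bounded-length excursion; since $G$ has huge minimum degree $L$, the probability that a uniformly random transition system routes the trail through any fixed $w$ within $\ell$ steps is $O(\ell/L)$, so the expected number of paths through $v$ that also pass through a fixed $w$ is $O(\ell^2 d_G(v)/L)$, which is $\le \frac{\varepsilon}{2} d_H(v)$ once $L$ is large (recall $d_H(v) = \Theta(d_G(v)/\ell)$). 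A concentration bound — here McDiarmid's inequality (the permutation version stated above, since each vertex's pairing is a random permutation) with $d = O(\ell)$ (swapping two elements of one pairing alters a bounded number of paths near $v$) and $r = O(\ell)$ — shows $|\{P \in \mathcal{P}_H(v): w \in P\}|$ is concentrated around its mean with deviations $O(\sqrt{\ell \cdot \varepsilon d_G(v)/\ell}) = O(\sqrt{\varepsilon d_G(v)})$, negligible against $d_H(v)$. For (ii): each closed trail loses $< \ell$ edges when cut into $\ell$-pieces, and the expected number of trails through $v$ is small (a random transition system at a degree-$D$ vertex produces on average $O(\log D)$ trail-passages, far below $\varepsilon D$), so $\mathbb{E}[d_R(v)] = O(\ell \log d_G(v))$, again concentrated via McDiarmid. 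Finally the degree window: $d_H(v) = \frac{1}{2}(d_G(v) - d_R(v)) \cdot \frac{2}{?}$ — more precisely each $\ell$-path through $v$ uses (generically) two edges at $v$, and $\underline{H}$ covers all of $E(G) \setminus R$, so $d_{\underline{H}}(v) = d_G(v) - d_R(v)$ and $d_H(v)$ is within $1$ of $\frac{1}{\ell}d_{\underline{H}}(v) = \frac{1}{\ell}(d_G(v) - d_R(v))$; since $d_R(v) = O(\ell \log d_G(v)) \le \varepsilon d_G(v)$ for $L$ large, this lands in the required interval.

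To combine these into a single positive-probability event I would invoke the Lovász Local Lemma (Proposition~\ref{prop: Lovasz}): define a bad event $A_v$ for each vertex $v$ saying that either $\mathrm{conf}(v) > \varepsilon$, or $d_R(v) > \varepsilon d_H(v)$, or $d_H(v)$ falls outside the window. By the above, $\Pr[A_v] \le p$ with $p$ super-polynomially small in $L$ (from the exponential tails in McDiarmid). The event $A_v$ depends only on the random pairings at vertices within distance $2\ell$ of $v$ — if a trail piece relevant to $v$ extends further, it has already contributed to conflict/error locally — so $A_v$ is mutually independent of all but $d \le \Delta(G)^{O(\ell)}$ other events. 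The catch is that $\Delta(G)$ is unbounded, so $4pd < 1$ is not automatic. The fix, and what I expect to be the main technical obstacle, is to first apply Proposition~\ref{halfgraph} repeatedly to write $G$ as an edge-disjoint union of bounded-maximum-degree pieces (or to bound the dependency by a function of $\ell$ and $\varepsilon$ alone by localizing more carefully — conflicts and errors at $v$ really only involve the $\le 2d_H(v) = O(d_G(v)/\ell)$ edges at $v$ and short excursions, and a careful accounting shows the relevant dependency graph has degree polynomial in $d_G(v)$, which is still unbounded). The clean way out, and the route I would actually take, is to \emph{first} split $G$ via Proposition~\ref{werra}/\ref{halfgraph} into $k$ color classes with $k = k(\ell, \varepsilon)$ chosen so that each class $G_i$ still has large minimum degree but now the conflict/error analysis can be run on each $G_i$ with dependency degree bounded in terms of $\ell, \varepsilon$ only; then the Local Lemma applies with room to spare, and reassembling the $\ell$-paths found in each $G_i$ (they are automatically edge-disjoint across classes) gives the desired $H$, with the discarded sets accumulating to an $R$ that is still $\le \varepsilon d_H(v)$ at every vertex provided $\varepsilon$ was shrunk by a factor $k$ at the start. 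Everything else — the expectation estimates and the McDiarmid applications — is routine given the huge lower bound $L$ on $\delta(G)$, which we are free to choose as large as needed.
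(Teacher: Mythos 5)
Your proposal is close in spirit to the paper's proof — both are probabilistic constructions of $\ell$-paths from random pairings at vertices, cleaned up by a Lov\'asz Local Lemma argument with McDiarmid concentration — but there are two genuine gaps where your sketch diverges from a workable argument, and both are filled in the paper by the same device that you do not have.

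\textbf{Length control.} You randomly match in-arcs with out-arcs at each vertex and follow transitions, producing closed trails of \emph{arbitrary} length that you then chop into $\ell$-pieces, discarding one short leftover per trail. This means you must control not only the leftover of each long trail (fine) but the total contribution of \emph{short} trails, and more importantly the walks you do keep need not be paths (they can revisit vertices), which you then have to remove and account for. The paper sidesteps this entirely: it first colors $E(G)$ nearly equitably with $\ell$ colors, orients each color class in a balanced way, and then at each vertex only matches $i$-colored in-edges with $(i{+}1)$-colored out-edges. This forces every walk to be a \emph{rainbow} walk with colors $1,2,\dots,\ell$ in order, hence of length exactly $\ell$ by construction — there are no long trails to cut. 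The only failures are walks that revisit a vertex (bad walks) or hit the dummy vertex introduced to equalize the half-degree counts (short walks). This is a structural simplification your sketch lacks.

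\textbf{The dependency bound.} You correctly identify the main obstacle: if the random object at each vertex is a single permutation of its $\Theta(d_G(v))$ arcs, the dependency degree of your bad events is $\Delta(G)^{O(\ell)}$, unbounded, and $4pd < 1$ fails. Your proposed fix — splitting $G$ into $k(\ell,\varepsilon)$ color classes via Proposition~\ref{halfgraph} so that each class ``has bounded maximum degree'' — does not work: Proposition~\ref{halfgraph} divides each vertex degree by $k$, so the classes still have unbounded maximum degree. The paper's actual fix is different and essential: within each vertex $v$ and each color $i$, the set $E_i^-(v)$ is further partitioned into \emph{half-cones} of size $c$ or $c-1$ where $c=\lfloor\sqrt{L}\rfloor$, half-cones are paired into \emph{cones}, and the random permutation is performed independently \emph{per cone}, not per vertex. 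The bad events $J_\varphi$ are then indexed by cones, each cone touches at most $c$ walks, the failure probability is $e^{-\Omega(c^{2/3})}$ (from McDiarmid applied to $Y_\varphi = X_\varphi + c^{2/3}$), and the dependency degree is at most $(c^{\ell+1})^2$ — polynomial in $c$ with $\ell$ fixed. Since an exponential in $c^{2/3}$ beats any fixed polynomial in $c$, the symmetric Local Lemma applies uniformly, independent of $\Delta(G)$. Without the cone partition (or something functionally equivalent that makes the local randomness have bounded support size $\mathrm{poly}(L)$ rather than $\Theta(d_G(v))$), your Local Lemma step does not close.

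In short: your instincts about the shape of the argument (orientation, random transitions, McDiarmid, LLL) are right, and you even spotted the key difficulty, but the coloring-into-$\ell$-classes trick that forces walks to have length exactly $\ell$, and especially the cone decomposition that makes the dependency degree a function of $L$ rather than of $\Delta(G)$, are the two ingredients your proposal is missing.
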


\begin{proof}
Let $c:=[\sqrt{L}]$ and $b:=[c^{2/3}]$, and pick $L$ so that $b\gg \ell$. According to Proposition~\ref{werra},
we can nearly equitably color the edges of $G$ with $\ell$ colors.
For every color $i$, applying Proposition~\ref{prop: arc-strong} we can orient the $i$-colored edges 
so that the numbers of in-edges and out-edges of color $i$ incident to every vertex $v$ differ by at most 1. Let $E_i^-(v)$ and $E_i^+(v)$ be the sets of $i$-colored in-edges and out-edges, respectively, incident to $v$.
Then, for every color~$i \in \{1, ..., \ell-1\}$, we have $$\big||E_i^-(v)|-|E_{i+1}^+(v)|\big|\le 3.$$
For the sake of convenience, we would like to have that $|E_i^-(v)|=|E_{i+1}^+(v)|$ for all $i$ and $v$.
To this end, we add a dummy vertex $v_0$ to $G$.
Now, if $|E_i^-(v)|-|E_{i+1}^+(v)|=k>0$, then we add $k$ dummy edges of color $i+1$ from $v$ to $v_0$ to equalize $|E_i^-(v)|$ and  $|E_{i+1}^+(v)|$. Similarly, if 
$|E_{i+1}^+(v)|-|E_i^-(v)|=k>0$, then we add $k$ dummy edges of color $i$ from $v_0$ to $v$. 

Now, for every $v\in V(G)$ and colour $i \in \{1,..., \ell\}$, we choose 
$r_{v,i}\in \{0,\ldots ,c-2\}$ such that $E_i^{-}(v)\equiv r_{v,i}$ (mod $c-1$). 
Since the minimum degree in each colour in $G$ is greater than $c(c-2)$, we can partition every set $E_i^{-}(v)$ into subsets of size $c$ and $c-1$ so that precisely $r_{v,i}$ of them have size $c$.  As $E_{i+1}^{+}(v)=E_i^{-}(v)$, we can 
similarly partition every set $E_{i+1}^{+}(v)$ into subsets of size $c$ and $c-1$ so that precisely $r_{v,i}$ of them have size $c$.

We call these subsets of edges {\it $i$-half cones} and {\it $(i+1)$-half cones}, respectively.
%
%the number of $i$-half cones of size $c$ (resp. of size $c-1$) in the partition of $N_i(v)$ is equal to the number of $j$-blades of size $c$ (resp. of size $c-1$) in the partition of $N_j(v)$. 
%We can therefore partition the edges of $N_{S(t)}(v)$ into {\it fans} which are unions of blades of the same size such that precisely one $i$-blade appears in the fan for every $i \in S(t)$. 
%In other words, a fan $\varphi$ at a vertex $v$ (with relation to $t$) is a subset of $N_{S(t)}(v)$ of size $c|S(t)|$ or $(c-1)|S(t)|$ such that all colours in $S(t)$ appear $c$ times or $c-1$ times in $\varphi$. We also call $\varphi$ a \textit{$t$-fan} to indicate the colours appearing in $\varphi$.
Now, for each vertex $v$ and color $i$, $ 1 \leq i \leq \ell - 1$, 
we arbitrarily pair $i$-half cones of $E_i^-(v)$ with $(i+1)$-half cones $E_{i+1}^+(v)$ in a way
such that in each pair the size of the two half cones are equal. We call such a 
pair an \emph{$i$-cone at vertex $v$}. Thus, an $i$-cone $\varphi$ at some vertex $v$ consists of an 
$i$-half cone $\varphi^{-}$ and an $(i+1)$-half cone $\varphi^{+}$ with $|\varphi^{-}| = |\varphi^{+}|$.  
Note that an edge $e$ of color $i$ directed from 
a vertex $u$ to a vertex $v$ in $G$ appears both in an $i$-half cone of $E_i^{+}(u)$ as 
well as in an $i$-half cone of $E_i^{-}(v)$, but we do not require these two $i$-half cones 
to have the same size. By convention, we do not create a cone at the dummy vertex $v_0$. However,
each edge $uv_0$ will still be inside a cone at vertex $u$.
We also remark that the 1-half cones of $E_{1}^{+}(v)$ and 
and $\ell$-half cones of $E_{\ell}^{-}(v)$ do not get paired with other
half cones. Nevertheless, we will adopt the convention that whenever we talk of a general 
cone $\varphi$, we will assume that $\varphi$ might also 
consist of a single 1-half cone or $\ell$-half cone of the aforementioned type.

%Note that
%edges of $E_1^{+}(v)$ of $\ell$  are special in that they miss a 
%set of in-edges and out-edges, respectively.

\medskip
 
We now have a fixed set of cones on $G$. To obtain our desired path-graph,
we will use the cone structure to construct rainbow paths of length $\ell$, i.e., paths
where for all $i$ the $i^{th}$ edge
of every path is of color $i$. 
One way to obtain this is to randomly match edges of the two half cones of every cone. Indeed, this is what we do.
For each cone $\varphi$ we carry out random permutations $\pi_{\varphi}^{-}$ of the edges of $\varphi^{-}$ 
and $\pi_{\varphi}^{+}$ of the edges of $\varphi^{+}$. We then pair the edges $\pi_{\varphi}^{-}(k)$
and $\pi_{\varphi}^{+}(k)$
for each $1 \leq k \leq c$.
If $\varphi$ is actually a special 1-half cone or $\ell$-half cone, then
there is only one random permutation performed at $\varphi$, which
will have no affect on the decomposition as will be apparent shortly.
Note that each edge $e = uv$ of $G$, with the exception of some edges of 1-cones, 
$\ell$-cones and the dummy edges,
is in exactly two cones - one centered at $u$ and the second centered at $v$. 
Thus, $e$ is involved in two random permutations corresponding to the two
permutations of the two half cones containing it. Therefore, given the random matchings, each non-dummy edge $e = uv$ of color $i$, $ 1 < i < \ell$, is paired exactly with one edge of color
$i-1$ (which enters $u$) and one edge of color $i+1$ (which exits $v$). 
From an arbitrary edge, we can thus go forward and backward  by edges paired with it until we reach 
edges of color $\ell$ or $1$ (unless we reach dummy edges). Thus, the random matchings yield a natural
decomposition of all edges of $G$ into edge-disjoint \emph{walks}. 
Unfortunately, some of the walks will not be paths.
%   
% We say that the resulting random matchings obtained inside all cones 
% form a \textit{configuration} of $G$. 
% Given a configuration, all edges of color $i$ except dummy edges, 
% are paired with an edge of color $i-1$ if $i>1$ and an edge of color $i+1$ if $i<\ell$.
% From an arbitrary edge, we can go forward and backward  by edges paired with it until we reach 
% edges of color $\ell$ or $1$ or dummy edges. 
% When a path enters $v_0$ by a dummy edge, it will not
% % be extended any longer. 
% Hence, for each configuration, we achieve a decomposition of $G$ into 
% edge-disjoint \textit{walks} of length at most $\ell$. 
We will divide the walks into three types. Of the first type are those walks which are paths,
and thus by construction they are necessarily isomorphic to $P_{\ell}$.
A walk that is not a path and which does not use the dummy vertex $v_0$ 
is called a \textit{bad walk}; note that every bad walk is of length $\ell$. A walk that uses the 
dummy vertex $v_0$ is called a \textit{short walk}. Note that a short walk is no longer 
extended from $v_0$ as there is no cone centered at $v_0$.

For each cone $\varphi$, there are $c-1$ or $c$ walks via $\varphi$, depending on $|\varphi|$.
We will show that, with high probability, the number of bad or short walks via $\varphi$
are negligible compared to $c$. We then will argue that proving this statement for all the cones
is sufficient for us to extract a dense path-graph from $G$. 
%Let $\Omega$ be the space of all configurations. 
%Next, we will examine the probability of some events in $\Omega$. 

\medskip

Denote $P_{\ell}:= x_0x_1...x_{\ell}$.
We first focus on bad walks.
Suppose that $\varphi$ is a $k$-cone at some vertex $v$, and $i$, $j$ are two colors. 
%We can assume that $|k-i|\le|k-j|$, and $k<j$, otherwise, we do the same in reverse direction.
We say that a bad walk $P=u_0u_1...u_{\ell}$ going through $\varphi$ is 
\emph{$(i,j)$-bad} if its $i^{th}$ vertex and $j^{th}$ vertex are 
the same, that is, $u_j=u_i$. 
Let $A_{\varphi}(i,j)$ be the event that the number of $(i,j)$-bad walks going through the 
cone $\varphi$ is greater than $b$. We will show that
$\PP[A_{\varphi}(i,j)] < 4e^{-c^{2/3}/64}$. 

Denote by $P_{i,k}$ and $P_{j,k}$ to be the subpaths from $x_i$ to $x_k$, and $x_j$ to $x_k$ in
$P_{\ell}$, respectively. In case one of these paths is contained in another,
we may assume that $P_{i,k}$ is contained in $P_{j,k}$. Let $x_{j'}$ be the neighbor
of $x_j$ in $P_{j,k}$. Note that $j' \in \{j-1, j+1 \}$. 
%We will assume that $k < j$ -- an identical argument in the reverse
%direction will give the result for $k > j$.
Let $\mathcal{P}_\varphi$ be the set of walks that go through $\varphi$ 
which are not short. Clearly,  $|\mathcal{P}_\varphi|
\leq c$. 
%To upper bound $\Pr[A_{\varphi}(i,j)]$, we only need
%to know how the walks $\mathcal{P}_\varphi$ are distributed among the $(j-1)$-cones.
%It turns out that for our purposes the distribution of $\mathcal{P}_\varphi$ is irrelevant to obtain
%the desired bound.

%Note that $ j > i + 1$ since otherwise a walk cannot be $(i,j)$-bad. 
We define
$\Omega_{j'}$ to be the set of all $j'$-cones in $G$ if $j' = j-1$,
and the set of all $j$-cones if $j'=j+1$. Let $\Pi$ be an arbitrary
but fixed outcome of all permutations at all cones except the set of permutations 
on $\Omega_{j'}$.
In other words, given $\Pi$, we only need to know the outcomes of the
set of permutations $\{\pi_{\varphi}^{+}, \pi_{\varphi}^{-}
\mid \varphi \in \Omega_{j'} \}$ to know the decomposition of the walks in $G$.
We will condition on $\Pi$; that is, we will show that $\PP[A_{\varphi}(i,j)] \mid \Pi] < 
4e^{-c^{2/3}/64}$ for any $\Pi$. Clearly, since $\Pi$ is arbitrary, this is sufficient to give us the 
uniform bound $\PP[A_{\varphi}(i,j)] < 
4e^{-c^{2/3}/64}$.
 
Let $\mathcal{P}'_{\varphi}$ denote the set of walks $\mathcal{P}_{\varphi}$ conditional on $\Pi$. 
Let $X_\varphi$ be the number of $(i,j)$-bad walks going through the cone 
$\varphi$ conditional on $\Pi$.  
By fixing $\Pi$, the set $\mathcal{P}'_{\varphi}$ is also fixed. Indeed, 
each $P' \in \mathcal{P}'_{\varphi}$ is a partial subwalk, where we know
the vertex of $P'$ that lies in some half-cone of a cone $\psi \in 
\Omega_{j'}$. 
Note that the vertex $u_i$ of $P'$ corresponding to $x_i$ is already known. 
Moreover, the vertex $u_{j'}$ corresponding to the vertex $x_{j'}$ is known
as well. 

Note that whether $P'$ is $(i,j)$-bad depends only on the permutations
$\pi^{-}_{\psi}$ and $\pi^{+}_{\psi}$. Note that there are $c-1$ or $c$
different images possible to match $u_{j'}$ when the random permutations
$\pi^{-}_{\psi}$ and $\pi^{+}_{\psi}$ are carried out, and only one of which could possibly be $u_i$. Thus, the probability that $P'$ is $(i,j)$-bad
is at most $\tfrac{1}{c-1}$.

Now, by linearity of expectation,
$$\mathbb{E}[X_{\varphi}] \leq |\mathcal{P}_{\varphi}| \cdot \frac{1}{c-1} \leq \frac{c}{c-1}.$$
%Let $Y_{\varphi_v} := X_{\varphi} + c^{2/3}$. Note that $$c^{2/3} \leq \mathbb{E}[Y_{\varphi_v}] \leq c^{2/3} + \frac{c}{c-1}.$$
We will apply McDiarmid's inequality to the random variable $Y_{\varphi}$ defined by $ Y_{\varphi}:= X_{\varphi} + c^{2/3}$. Clearly $\mathbb{E}[Y_{\varphi}] = \mathbb{E}[X_{\varphi}] + c^{2/3} \in [ c^{2/3}, c^{2/3} + 2 ]$.
Only the permutations $\pi^{-}_{\psi}, \pi^{+}_{\psi}$ with $\psi \in 
\Omega_{j'}$ affect $X_{\varphi}$ and thus $Y_{\varphi}$. If two elements in one of these permutations are interchanged, then the structure of two walks 
in $\mathcal{P}_{\varphi}$ changes. However, clearly the number of $(i,j)$-bad walks in $\mathcal{P}_{\varphi}$ cannot change by more than 2. Thus, we can choose $d=2$ in McDiarmid's inequality.

If $Y_{\varphi} \geq s$, then $X_{\varphi} \geq s - c^{2/3}$, and thus at least $s - c^{2/3}$ of the walks in $\mathcal{P}_{\varphi}$ are $(i, j)$-bad. 
Let $P'\in \mathcal{P}_{\varphi}'$ be a subwalk of a walk $P$ that is counted by $X_{\varphi}$. As before, let $u_i = u_j$ denote the images of $x_i$ and $x_j$ in $P$, and $\psi \in \Omega_{j'}$ the cone through which $P'$ passes.
To verify that $P$ is $(i, j)$-bad, we only need to reveal the two elements
$\pi_{\psi}^{+}(s), \pi_{\psi}^{-}(s)$, where $ 1 \leq s \leq c$ is the value
such that the edge $u_{j'}u_j \in \{ \pi_{\psi}^{+}(s), \pi_{\psi}^{-}(s) \}$.

Thus, $X_{\varphi} \geq s - c^{2/3}$ can be certified by the outcomes of $2(s - c^{2/3}) < 2s$ choices and we can choose $r=2$ in McDiarmid's inequality.
By applying McDiarmid's inequality to $Y_{\varphi}$ with $\lambda =\EE[Y_{\varphi}]$, $d=2$, $r=2$, we get 
$$\PP\left[|Y_{\varphi}- \EE[Y_{\varphi}]| > \EE[Y_{\varphi}] + 120 \sqrt{2\EE[Y_{\varphi}]}~\right] \leq 4 e^{-\tfrac{\EE[Y_{\varphi}]}{64}} \leq 4e^{-\tfrac{c^{2/3}}{64}}$$
and thus
$\PP\left[X_{\varphi} > 2c^{2/3} \right] \leq 4e^{-c^{2/3}/64}$.
So we have $\PP[A_{\varphi}(i,j)|\Pi] < 4e^{-c^{2/3}/64}$. Since $\Pi$ is arbitrary 
it follows that $\PP[A_{\varphi}(i,j)] < 4e^{-c^{2/3}/64}$.
Let $A_{\varphi}$ be the event that there are more than $\ell^2b$ bad walks via $\varphi$. $$\PP[A_{\varphi}]\le \PP\Big[\bigcup_{\forall i,j}A_{\varphi}(i,j)\Big]\le \sum_{\forall i,j}\PP[A_{\varphi}(i,j)]< 4 \ell^2 e^{-c^{2/3}/64}.$$

We still consider
the same cone $\varphi$.
For an integer $j \neq k$ and 
vertex $u$, let $B_\varphi(j, u)$ be the event that the number of walks via $\varphi$ whose image of $x_j$ is $u$ is greater than $b$,
and let $B_\varphi(u)$ be the event that the number of walks of $\varphi$ containing $u$ is greater than $\ell b$. 

We show that $\PP[B_\varphi(j, u)] < 4e^{-c^{2/3}/64}$. As the computation
is virtually identical to the case of $\PP[A_{\varphi}(i,j)]$, we only
highlight the differences. 
As before, let $x_{j'}$ be the vertex adjacent to $x_j$ on $P_{j,k}$, and
let $\Pi$ be an arbitrary
but fixed outcome of all permutations at all cones except the set of permutations 
on $\Omega_{j'}$. It suffices to show that 
$\PP[B_\varphi(j, u) \mid \Pi] < 4e^{-c^{2/3}/64}$. 

Let $X_{\varphi}$ denote the random variable conditional on $\Pi$ which counts the number of walks in $\mathcal{P}_{\varphi}$ where $u$ is the image of $x_j$. 
The vertex $u$ appears at most once in each cone of $\Omega_{j'}$, so by linearity of expectation we have 
$$\mathbb{E}[X_{\varphi}] \leq |\mathcal{P}_{\varphi}| \cdot \frac{1}{c-1} \leq \frac{c}{c-1}.$$

We again apply McDiarmid's inequality to the random variable $Y_{\varphi}$ defined by $ Y_{\varphi}:= X_{\varphi} + c^{2/3}$. As before, $\mathbb{E}[Y_{\varphi}] = \mathbb{E}[X_{\varphi}] + c^{2/3}$.

Since the vertex $u$ appears at most once in each cone of $\Omega_{j'}$,
swapping two positions in any permutation of a half-cone in $\Omega_{j'}$
can affect $X_{\varphi}$ by at most 1. 
Thus, we can choose $d=1$ in McDiarmid's inequality.

If $Y_{\varphi} \geq s$, then $X_{\varphi} \geq s - c^{2/3}$. 
Let $P'$ be a subwalk that is counted by $X_{\varphi}$. As before, 
we can certify that $P'$ is counted by $X_{\varphi}$ by considering
only $\psi \in \Omega_{j'}$, the cone through which $P'$ passes.

To certify that $P'$ is counted by $X_{\varphi}$ we only need to reveal the two elements $\pi_{\psi}^{+}(s), \pi_{\psi}^{-}(s)$, where $s$ is the value such that one of the edges $\pi_{\psi}^{+}(s), \pi_{\psi}^{-}(s)$ contains the endpoint $u$.
Thus, $X_{\varphi} \geq s - c^{2/3}$ can be certified by the outcomes of $2(s - c^{2/3}) < 2s$ choices and we can choose $r=2$ in McDiarmid's inequality.
Thus, by a similar argument as above we obtain that $\PP[B_\varphi(j, u)] < 4e^{-c^{2/3}/64}$. Now,

\begin{align*}
&\PP\left[B_\varphi( u)\right]\le \PP\Big[\bigcup_{\forall i}B_\varphi( i,u)\Big]\le \sum_{\forall i}\PP\left[B_\varphi(i,u)\right]<4 \ell e^{-c^{2/3}/64}.
\end{align*}
Let $B_\varphi$ be the event that there exists a vertex $u$ such that more than $\ell b$ walks of $\varphi$ contain $u$. 
The number of vertices $u$ that could possibly appear in the walks $\mathcal{P}_\varphi$ is
at most $c + c^2 + ... +c^{\ell} < c^{\ell+1}$ .
%Please note that  there are no more than $c^i$ cones can be reached by going out $i$ steps from from $\varphi$, so there are no more than $c^{l+1}$ vertices can a point of some path via $\varphi$. 
Hence,
$$\PP[B_\varphi]=\PP\Big[\bigcup_{\forall u}B_\varphi(u)\Big]\le \sum_{\forall u}\PP[B_\varphi(u)]<4 c^{\ell+1} \ell e^{-c^{2/3}/64}.$$

\medskip
Let $B'_\varphi(j)$ be the event that the number of walks via $\varphi$ such that they enter 
$v_0$ at exactly their $j^{th}$-vertex is greater than $b$,
and let $B'_\varphi$ be the event that the number of walks of $\varphi$ containing $v_0$ is greater
than $\ell b$. We upper bound $\PP[B'_\varphi(j)]$. 

The argument is virtually identical to that of the estimate above.
We apply McDiarmid's inequality to the random variable $ Y_{\varphi}:= X_{\varphi} + c^{2/3}$, where $X_{\varphi}$ is the number of walks
via $\varphi$ that enter $v_0$ at the $j^{th}$ edge conditional on $\Pi$.
As before, we obtain that $\EE[X_{\varphi}] \leq c/(c-1)$, $d=1, r=2$, yielding
$\PP[ B'_\varphi(j)] \leq 4e^{-c^{2/3}/64}$.
%
%As before, we condition the partition of $\mathcal{P}_\varphi$ into arbitrary $(j-1)$-cones $\psi_1,..., \psi_r$ and give a uniform upper bound. 
%If $Y_\varphi(j)$ is the number of walks of $\varphi$ that enter $v_0$ at the $j$-vertex, then
%$$\mathbb{E}[Y_\varphi(j) \mid  \text{$\mathcal{P}_\varphi$ at level $j-1$ are partitioned in the cones $\psi_1,..., \psi_r$}] \leq 3.$$ 
%Now, applying Azuma's Inequality, with a similar argument as above we obtain that 
%$$\Pr[B'_\varphi(j)] < e^{-c^{1/3}/8} .$$ 
Thus, $\PP[B'_\varphi]< 4 \ell e^{-c^{2/3}/64}$.

\medskip

Let $J_\varphi=A_\varphi\cup B_\varphi\cup B'_\varphi$. Then
$$\PP[J_\varphi]\le \PP[A_\varphi]+\PP[B_\varphi]+\PP[ B'_\varphi]<(\ell^2+c^{\ell+1}\ell+\ell)4e^{-c^{2/3}/64}<e^{-b/100}.$$
Let $\mathcal{J}_\varphi$ be the set of events $J_\psi$ that are not
mutually independent of $J_\varphi$. 
Note that the number of permutations determining 
$J_{\varphi}$ is at most $(2c) + (2c)^2 + ... + (2c)^{\ell} < c^{\ell + 1}$.
Indeed, $c^{\ell + 1}$ is an upper bound on the number of walks of length $\ell$ that could contain an edge of $\varphi$. 
Each such permutation itself could affect at most $c + ... + c^{\ell} < c^{\ell + 1}$ events $J_{\psi}$.
Thus, $|\mathcal{J}\varphi|\le (c^{\ell+1})^2.$ 

We now apply the symmetric version of the Local Lemma. 
%Let $x_\varphi=c^{-(\ell+1)}$ for any $\varphi.$ Then,
%\begin{align*}
%x_\varphi\prod_{\psi\in \mathcal{J}\varphi}\big(1-x_\psi\big)&\ge c^{-(\ell+1)}\big(1-c^{-(\ell+1)}\big)^{|\mathcal{J}\varphi|}\\
%&>c^{-(\ell+1)}e^{-2}\\
%&>e^{-b/100}\\
%&>\Pr[J_\varphi].
%\end{align*}
To that aim, we need to have that $(c^{\ell+1})^2 e^{-b/100} < 1/4$, which clearly holds
since $\ell$ is fixed and $c$ is sufficiently large. Thus, by
Lov\'asz Local Lemma, $\PP \big[\bigcap_{\forall \varphi}\overline{J_\varphi}\big]>0$. Thus, there exists pairings of the
edges of the cones $\Gamma$ such that no event $J_\varphi$ occurs for every cone $\varphi$.

\medskip

Let $H$ be the $\ell$-path-graph obtained from $\Gamma$ by removing all bad walks and short walks.
Let $R:=G\backslash \underline{H}$. We can assume that $L$ is sufficiently large
so that $\ell^4b < 
\varepsilon (1- \varepsilon) c/2$. Then:
\begin{enumerate}
\item In every cone $\varphi$, there are no more than $\varepsilon c$ bad and short walks via it, so there are at least 
$(1-\varepsilon)c$ paths in $H$ via it. 
Hence, using the fact that $G$ is nearly equitably colored and by considering
the special $1$ and $\ell$-half cones, we obtain that for 
every $v$, there are at least $\frac{1- \varepsilon}{2\ell}d_G(v)$ paths in $H$ starting at $v$, and at least $\frac{1- \varepsilon}{2\ell}d_G(v)$ paths in $H$ ending at $v$. 
Hence, $d_H(v) \ge \frac{1-\varepsilon}{\ell}d_G(v).$ The nearly equitable $\ell$-edge-coloring implies immediately that $d_H(v) \le \frac{1+\varepsilon}{\ell}d_G(v).$

\item For every pair of vertices $u,v$, $ u \neq v$, among all walks via a cone of $u$, the ratio of walks going through $v$ is less than $\ell^2b/c< \varepsilon/2\ell$. Hence, among all walks via $u$, the ratio of walks
going through $v$ is less than $\varepsilon/2\ell$. Thus $$\frac{|\{P\in {\cal P} : u,v \in P\}|}{d_G(u)} \le \varepsilon/2\ell,$$ and, hence, ${\rm conf}(u)\le \varepsilon$.

\item In every cone, there are no more than $\ell^3b$ bad and short walks via it, so the proportion of bad walks is at most $\ell^3b/c < \varepsilon
(1-\varepsilon)/2\ell$. Hence, among all walks via a vertex $v$, the ratio of bad and short walks is less than $\varepsilon (1-\varepsilon)/2\ell$. Thus $d_R(v) < \varepsilon (1-\varepsilon) d_G(v)/2\ell$, implying $d_{R}(v)\le \varepsilon d_H(v).$\qedhere
\end{enumerate}\qedhere
\end{proof}

In the sequel, given two path-graphs $H_1$ and $H_2$ over a same graph,
we will need to grow paths of, say, $H_1$ using the paths from $H_2$.
This will essentially be achieved by considering every path $P$ of $H_1$,
incident to, say, a vertex $v$, then considering a path $P'$ incident to $v$ in $H_2$,
and just concatenating $P$ and $P'$.
So that the concatenation can be performed this way for every path of $H_1$, 
we just need $H_2$ to have enough paths,
and to make sure to evenly use these paths.
The latter requirement can be ensured by just orienting $H_2$ in a balanced way,
that is so that $|d^+(v) - d^-(v)| \leq 1$ for every vertex $v$,
and choosing, as $P'$, a path out-going from $v$.
All such out-going paths are called \textit{private paths of $v$} throughout the upcoming proofs.

The path-graph $H$ we get from $G$ after applying Theorem~\ref{dense} hence satisfies $\frac{1-\varepsilon}{\ell} d_G(v) \leq d_H(v) \leq \frac{1+\varepsilon}{\ell} d_G(v)$ for every vertex $v$.
%Because $\varepsilon$ is arbitrarily small, we can rewrite $d_H(v)/d_G(v)\in \big[ \frac{1- \varepsilon}{\ell},\frac{1+ \varepsilon}{\ell}\big]$ as $d_H(v)\approx d_G(v)/\ell$. 
If we preserve the orientation of the edges of $H$ as in the proof, 
and denote by $d_H^+(v)$ the number of paths starting from $v$ in $H$, 
we get $$\frac{1-\varepsilon}{2\ell} d_G(v) \leq d^+_H(v) \leq \frac{1+\varepsilon}{2\ell} d_G(v)$$ for every vertex $v$.
%$d_H^+(v)\approx d_G(v)/2\ell$ 
These $d_H^+(v)$ paths out-going from $v$ will hence be regarded as its private paths in what follows.

\begin{theorem}\label{ll1}
Let $\ell$ be a positive integer, and $\varepsilon'$ be a sufficiently small positive real number. 
There exists $L$ such that, for every graph $G$ with minimum degree at least $L$, 
there is an $(\ell,\ell+1)$-path-graph $H$ decomposing $G$ with
\begin{itemize}
	\item ${\rm conf}(H) \leq 1/4(\ell + 10)$, and
	\item $\frac{1-\varepsilon'}{\ell} d_G(v) \leq d_H(v) 
	\leq \frac{1+\varepsilon'}{\ell} d_G(v)$ for all vertices $v$.
\end{itemize}
\end{theorem}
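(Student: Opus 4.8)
The idea is to start from the $\ell$-path-graph provided by Theorem~\ref{dense} and absorb its few uncovered edges, one at a time, by lengthening some of its paths from $\ell$ to $\ell+1$.

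\textbf{Set-up.} Pick $\varepsilon>0$ small enough in terms of $\ell$ and $\varepsilon'$ (it will suffice to have $\varepsilon\le\varepsilon'/3$ and $\varepsilon(\ell+3)\le 1/\big(8(\ell+10)\big)$), and let $L$ be the constant given by Theorem~\ref{dense} for this $\varepsilon$. Applying Theorem~\ref{dense}, we obtain an $\ell$-path-graph $H_0$ on $G$ with ${\rm conf}(H_0)\le\varepsilon$, with $d_{H_0}(v)/d_G(v)\in[\tfrac{1-\varepsilon}{\ell},\tfrac{1+\varepsilon}{\ell}]$ for every $v$, and such that $R:=G\setminus\underline{H_0}$ satisfies $d_R(v)\le\varepsilon\,d_{H_0}(v)$ for every $v$. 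We keep the balanced orientation of $H_0$ described after Theorem~\ref{dense}, so every vertex $v$ owns a set of $d_{H_0}^+(v)\ge\tfrac{1-\varepsilon}{2\ell}d_G(v)\ge\tfrac14 d_{H_0}(v)$ private (outgoing) paths, the private paths of distinct vertices being pairwise disjoint. Finally, fix any orientation of $R$.

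\textbf{Absorbing $R$.} For each arc $u\to z$ of $R$ we will select a private path $P=u\,y_1\cdots y_\ell$ of $u$ with $z\notin V(P)$ and then replace $P$ by the length-$(\ell+1)$ path $z\,u\,y_1\cdots y_\ell$; writing $H$ for the resulting path-graph, we then have $\underline{H}=\underline{H_0}\cup R=E(G)$, so $H$ decomposes $G$. We impose two conditions on the selection: the chosen private paths must be pairwise distinct (so each path of $H_0$ is lengthened at most once and $H$ is genuinely an $(\ell,\ell+1)$-path-graph), and, crucially, for every vertex $v$ at most $\varepsilon\ell\,d_{H_0}(v)$ of the chosen private paths have $v$ as their non-$u$ endpoint. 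Distinctness alone is easy: at each $u$ there are at least $\tfrac14 d_{H_0}(u)$ private paths, of which at most $\varepsilon\,d_{H_0}(u)$ contain the head of a given arc (since ${\rm conf}(H_0)\le\varepsilon$), while only $d_R^+(u)\le\varepsilon\,d_{H_0}(u)$ arcs leave $u$, so Hall's condition holds locally with plenty of slack. The balancing condition is obtained by phrasing the whole selection as a flow problem (source $\to$ one unit per arc $\to$ the eligible private paths of its tail $\to$ their non-$u$ endpoints $\to$ sink, with capacity $\varepsilon\ell\,d_{H_0}(v)$ on the last arc at $v$) and verifying that no cut is smaller than $|E(R)|$; this uses that, for fixed $u$ and $v$, at most $\varepsilon\,d_{H_0}(v)$ private paths of $u$ end at $v$ (again by ${\rm conf}(H_0)\le\varepsilon$), together with the large gap between $d_R^+(u)$ and the number of private paths of $u$. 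I expect this balancing step to be the main obstacle: without it, a vertex $w$ carrying many edges of $R$ could force a large number of lengthened paths to share a common far endpoint $v$ of much smaller degree, and ${\rm conf}(v)$ would be out of control.

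\textbf{Verification.} The degree bound is immediate: each edge of $R$ at $v$ changes the number of paths of $H$ having $v$ as an endpoint by at most one, so $|d_H(v)-d_{H_0}(v)|\le d_R(v)\le\varepsilon\,d_{H_0}(v)$, whence $d_H(v)/d_G(v)\in[\tfrac{(1-\varepsilon)^2}{\ell},\tfrac{(1+\varepsilon)^2}{\ell}]\subseteq[\tfrac{1-\varepsilon'}{\ell},\tfrac{1+\varepsilon'}{\ell}]$. For the conflict bound, fix $v\ne w$ and note that a path of $H$ incident with $v$ is either an unlengthened $H_0$-path with endpoint $v$, or a lengthened path with new endpoint $v$, or a lengthened path with old non-attach endpoint $v$; counting those containing $w$ gives, respectively, at most $\varepsilon\,d_{H_0}(v)$ (by ${\rm conf}(H_0)\le\varepsilon$), at most $d_R(v)\le\varepsilon\,d_{H_0}(v)$, and at most $\varepsilon\,d_{H_0}(v)+\varepsilon\ell\,d_{H_0}(v)$ (the $H_0$-part of such a path contributing $w$ in at most $\varepsilon\,d_{H_0}(v)$ cases by ${\rm conf}(H_0)$, and the single added vertex being $w$ in at most $\varepsilon\ell\,d_{H_0}(v)$ cases by the balancing condition). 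Summing yields at most $(3\varepsilon+\varepsilon\ell)\,d_{H_0}(v)$, and since $d_H(v)\ge\tfrac12 d_{H_0}(v)$ we get ${\rm conf}(v)\le 2(3\varepsilon+\varepsilon\ell)\le 1/\big(4(\ell+10)\big)$ by the choice of $\varepsilon$. Taking $L$ as above completes the plan.
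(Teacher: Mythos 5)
Your proposal is correct in outline but takes a genuinely different route from the paper. The paper first splits $G$ into a small $1/(9\ell)$-fraction $G_1$ and the large remainder $G_2=G\setminus G_1$, applies Theorem~\ref{dense} to each to get $\ell$-path-graphs $H_1,H_2$ with remainders $R_1,R_2$, and then absorbs all of $R=R_1\cup R_2$ by lengthening paths of the \emph{small} piece $H_1$ only, producing $H_1'$. Because $d_{H_1'}(v)/d_{H_2}(v)\le 1/(6\ell)$ for all $v$, the contribution of $H_1'$ to ${\rm conf}(H_1'\cup H_2)$ is at most $1/(6\ell)$ no matter how badly the lengthened paths behave; the paper makes no attempt to control where their far endpoints land, because it does not need to. You instead apply Theorem~\ref{dense} once and must control ${\rm conf}(H)$ directly, which, as you correctly anticipate, forces the balancing constraint on far endpoints. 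Your route saves an invocation of Theorem~\ref{dense} at the cost of a flow argument; the paper's layered split sidesteps the balancing question entirely.

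The balancing step is the one place where the write-up is incomplete rather than wrong, and you flag it yourself. The flow does exist, and the cleanest way to see it is the fractional relaxation rather than chasing the cut condition: from each arc $u\to z$ of $R$, route weight $1/|\Pi_{u,z}|$ to each of the $|\Pi_{u,z}|\ge d_{H_0}(u)/8$ eligible private paths of $u$, where $\Pi_{u,z}$ denotes the set of private paths of $u$ avoiding $z$. Each private path of $u$ then carries weight at most $d_R^+(u)\cdot 8/d_{H_0}(u)\le 8\varepsilon\le 1$, so unit path-capacities are respected, and the weight arriving at a far endpoint $v$ is at most $8\varepsilon\cdot d^-_{H_0}(v)\le 8\varepsilon\, d_{H_0}(v)$, since there are exactly $d^-_{H_0}(v)$ incoming private paths at $v$. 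All capacities are integers, so integrality of maximum flow yields the integral selection. Note this gives budget $8\varepsilon\, d_{H_0}(v)$ rather than your $\varepsilon\ell\, d_{H_0}(v)$; for small $\ell$ you should adopt the budget $8\varepsilon\, d_{H_0}(v)$ and shrink $\varepsilon$ (replace your condition $\varepsilon(\ell+3)\le 1/(8(\ell+10))$ by, say, $\varepsilon\cdot 11\le 1/(8(\ell+10))$), which changes nothing qualitative. With that repair and the flow argument spelled out, your proof is a valid alternative to the paper's.
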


\begin{proof}
Let $\varepsilon' > 0$ be sufficiently small, and set $\varepsilon:= \varepsilon' / 10$. Let 
$G_1$ be a $1/9\ell$-fraction of $G$ obtained by Proposition \ref{halfgraph}, 
and $G_2:=G\backslash G_1$. 
By applying Theorem~\ref{dense} on $G_1$ and $G_2$ with $\varepsilon$, we get two $\ell$-path-graphs $H_1$ and $H_2$  and two remainders $R_1$ and $R_2$ satisfying all properties from the statement of Theorem~\ref{dense}.
For convenience, we will keep the orientation of the edges of $H_1$ and $H_2$ given by Theorem~\ref{dense}. 
Note that $$ \tfrac{1-\varepsilon}{\ell} \cdot \
\left(\tfrac{d_G(v)}{9\ell} - 2\right)  \leq d_{H_1}(v) \leq \tfrac{1+\varepsilon}{\ell} \cdot 
\left(\tfrac{d_G(v)}{9\ell} + 2\right)$$ and 
$$ \tfrac{1-\varepsilon}{\ell} \cdot \
\left(\tfrac{(9\ell-1)d_G(v)}{9\ell} - 2\right)  \leq d_{H_2}(v) \leq \tfrac{1+\varepsilon}{\ell} \cdot 
\left(\tfrac{(9\ell-1)d_G(v)}{9\ell} + 2\right).$$
Now, we have
$\frac{1-\varepsilon}{(1+\varepsilon)(9\ell-1)} d_{H_2}(v) - 10 \leq d_{H_1}(v) \leq 
\frac{1+\varepsilon}{(1-\varepsilon)(9\ell-1)} d_{H_2}(v) + 10$
%$d_{H_1}(v)\approx d_{H_2}(v)/8$ 
for all vertices $v$.
Let $R:=R_1\cup R_2$. Then for every vertex $v$, we have
 $$d_{R}(v) = d_{R_1}(v)+d_{R_2}(v)\le\varepsilon d_{H_1}(v)+\varepsilon d_{H_2}(v) \leq 10 \varepsilon d_{H_1}(v).$$
 
%where 
%$$(1-\varepsilon) 9\varepsilon d_{H_1}(v) \leq \varepsilon\left(d_{H_1}(v) + d_{H_2}(v)\right) \leq (1+\varepsilon)9\varepsilon d_{H_1}(v).$$
 
Arbitrarily orient the edges of $R$. In our construction, every step consists in extending an arc $vu$ of $R$ using a private (i.e., outgoing) $\ell$-path starting at $v$ in $H_1$ that does not contain $u$ -- thus forming an $(\ell+1)$-path. 
Since the conflict ratio of $H_1$ satisfies
${\rm conf}(H_1) \leq \varepsilon$, at most $\varepsilon d_{H_1}(v)$ paths in $H_1$ 
with $v$ as endpoint contain $u$. Note that the number of directed $\ell$-paths in $H_1$ starting at $v$ is $d^{+}_{H_1}(v) \geq
\tfrac{1}{2} \cdot \tfrac{(1-\varepsilon)d_{G_1}(v)}{\ell}$. 
Thus, $d^{+}_{H_1}(v) - d_R(v) > \varepsilon d_{H_1}(v)$ since $L$ can be
chosen sufficiently large. 
Hence, all the $d_R(v)$ edges can be used to form $\ell+1$-paths. 

%Once the construction is achieved, graph $R$ is empty, and $H_1$ is an $(\ell,\ell+1)$-path-graph. 
We call $H_1'$ the resulting $(\ell,\ell+1)$-path-graph obtained by concatenating paths from $H_1$ and paths from $R$.
Since $d_R(v)\le 10 \varepsilon d_{H_1}(v)$ for every $v$, the degree of $v$ in $H_1'$ is as
$$d_{H_1}(v)-10\varepsilon d_{H_1}(v) \leq d_{H_1'}(v) \leq d_{H_1}(v)	+10\varepsilon d_{H_1}(v).$$
%and, thus, we have easily that
%$$\left( \frac{(1-\varepsilon)-9\varepsilon(1+\varepsilon)}{8} \right)d_{H_2}(v) \leq d_{H_1'}(v) \leq \left( \frac{(1+\varepsilon)+9\varepsilon(1+\varepsilon)}{8} \right)d_{H_2}(v).$$

%does not change much, that is we have $\frac{1-\varepsilon}{8} d_{H_2}(v) \leq d_{H_1}(v) \leq \frac{1+\varepsilon}{8} d_{H_2}(v)$.
%$d_{H_1}(v)\approx d_{H_2}(v)/8$. 

Let $H:=H_1'\cup H_2$. Then $H$ is an $(\ell,\ell+1)$-path-graph decomposing $G$, in which we have 
%$$d_{H}(v)=  d_{H_1'}(v)+ d_{H_2}(v)\approx d_{G_1}(v)/\ell+ d_{G_2}(v)/\ell=d_G(v)/\ell$$
$d_{H}(v)= d_{H_1'}(v)+ d_{H_2}(v)$ for all vertices $v$. Thus, 
$$d_{H_1}(v)	- 10\varepsilon d_{H_1}(v) + d_{H_2}(v) \leq 
d_H(v) \leq d_{H_1}(v)	+10\varepsilon d_{H_1}(v) + d_{H_2}(v).$$
Thus, $$\tfrac{1-\varepsilon}{\ell} d_{G}(v)	- 10\varepsilon d_{H_1}(v) + 1 \leq 
d_H(v) \leq \tfrac{1+\varepsilon}{\ell} d_{G}(v)	+10\varepsilon d_{H_1}(v) + 1.$$

\noindent Since $\varepsilon' = 10 \varepsilon$, we obtain
that $$\tfrac{1-\varepsilon'}{\ell} d_{G}(v)	\leq 
d_H(v) \leq \tfrac{1+\varepsilon'}{\ell} d_{G}(v).$$

\medskip

Observe also that $d_{H_1'}(v)/ d_{H_2}(v) \leq 1/6 \ell$.

\noindent Thus,

$${\rm conf}(H) \le {\rm conf}(H_2) + {\rm conf}(H_1')/ 6 \ell \leq \varepsilon + 1/ 6 \ell  < 1/4(\ell + 10),$$
as required.
\end{proof}

\section{Path-trees} \label{section:path-trees}

This part is the combinatorial core of our paper. We need here 
to show the existence of particular path-trees, namely $(\ell,2\ell)$-path-trees under mild connectivity and minimum degree requirements. 
These $(\ell,2\ell)$-path-trees will play a crucial role to insure that some path-graph has 
all of its vertices being of even degree.
However, directly getting an $(\ell,2\ell)$-path-tree seems 
a bit challenging, and we will follow a long way for this, starting 
with a $(1,2)$-path-tree and making its paths grow. 

We start off with the following lemma 
which is the key for the drop of the 
large edge-connectivity requirement. 

\begin{lemma} \label{lemma:13-tree}
Every 2-edge-connected multigraph has
a subcubic spanning $(1,2)$-path-tree.
\end{lemma}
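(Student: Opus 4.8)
The plan is to prove the statement by induction on the number of edges of $G$, using an ``ear‑type'' reduction. The base cases are a single vertex (nothing to do) and a cycle $v_1v_2\cdots v_mv_1$, including the digon, for which one takes $\mathcal{P}=\{\{v_1v_2\},\{v_2v_3\},\dots,\{v_{m-1}v_m\}\}$, so that $\tilde H$ is a Hamiltonian path of $G$ and is in particular a subcubic spanning $(1,2)$‑path‑tree. For the inductive step I first dispose of the easy case: if $G$ has an edge $e$ such that $G\backslash e$ is still $2$‑edge‑connected, I apply induction to $G\backslash e$ (fewer edges, same vertex set) and observe that the resulting path‑tree is also a path‑tree of $G$. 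From now on I may therefore assume $G$ is minimally $2$‑edge‑connected and not a cycle.

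Under that assumption $G$ has a vertex $v$ of degree $2$ (for instance an internal vertex of the last ear of an ear decomposition, which cannot be a single edge by minimality). Let $a,b$ be the neighbours of $v$; I treat the generic case $a\neq b$ (here minimality also forces $a,b$ to be non‑adjacent in $G$; the degenerate case of a double edge at $v$ is handled analogously, attaching $v$ near its unique neighbour). Let $G'$ be obtained from $G$ by deleting $v$ and adding a new edge $ab$; then $G'$ is $2$‑edge‑connected with one edge fewer, so by induction it has a subcubic spanning $(1,2)$‑path‑tree $H'$. If the new edge $ab$ is used by some path $Q$ of $\mathcal{P}'$, I re‑insert $v$ by \emph{splicing}: according to whether $Q$ equals the single edge $ab$, a $2$‑path $wab$, or a $2$‑path $abw$, I replace $Q$ by $\{va\},\{vb\}$, or by $\{wav\},\{vb\}$, or by $\{va\},\{vbw\}$ respectively. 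In each case the new paths have length at most $2$, they are edge‑disjoint from everything else (they use only the fresh edges $va,vb$ together with edges freed from $Q$), and in $\tilde H$ the single edge of $Q$ is replaced by a $2$‑edge path through the new vertex $v$; hence $\tilde H$ stays a spanning tree, $d_{\tilde H}(v)\le 2$, and $d_{\tilde H}(a),d_{\tilde H}(b)$ are unchanged, so $\tilde H$ remains subcubic. This yields the desired path‑tree of $G$.

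The remaining situation --- $ab$ \emph{not} used in $\mathcal{P}'$ --- is, I expect, the heart of the matter, and precisely where the degree bookkeeping must be done carefully. Then $H'$ is already a subcubic spanning $(1,2)$‑path‑tree of $G\backslash v$, and it only remains to attach the single vertex $v$, necessarily as the endpoint of a new length‑$1$ or length‑$2$ path beginning $v,a,\dots$ or $v,b,\dots$; if $d_{H'}(a)\le 2$ or $d_{H'}(b)\le 2$ this is immediate, but if both equal $3$ a careless attachment creates a degree‑$4$ vertex. The clean way around this is to strengthen the inductive statement to ``$G$ has a subcubic spanning $(1,2)$‑path‑tree in which a prescribed edge $e$ occurs as a single‑edge path'', and, in the suppression step, to apply this to $G'$ with $e$ the newly added edge $ab$, so that the bad case never arises; one then has to check that the prescribed‑edge condition also passes through the base cases and the removable‑edge reduction, and in the suppression step to take (when possible) the degree‑$2$ vertex incident to the prescribed edge, otherwise tracking that edge along the recursion. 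An alternative, more hands‑on route is to build $G$ up along an ear decomposition, attaching each new ear as a pendant path at one of its endpoints and re‑routing via freed‑up edges when both endpoints are saturated; the obstacle there is exactly the same.
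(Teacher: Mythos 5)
Your set-up is sound up to the point you flag, and the splicing itself is correct: when the contracted edge $ab$ sits in some path $Q\in\mathcal P'$, replacing $Q$ by the two indicated shorter paths subdivides the corresponding edge of $\tilde H'$ with the new vertex $v$, preserving the tree property and all degrees. But the case ``$ab$ not used'' is a real gap, and your proposed patch does not close it. Prescribing $e$ to appear as a $1$-path gives you one degree of freedom, while the suppression step needs two: you must prescribe $ab$ (so that splicing is always available) \emph{and} still honour the original prescription on $e$. When $e$ is not incident to the suppressed vertex $v$ these two demands are in conflict, and the fallback of ``choose $v$ incident to $e$'' is not always available: there exist minimally $2$-edge-connected graphs in which both endpoints of some edge have degree at least $3$ (take two copies of $K_{2,3}$ and join the two degree-$3$ vertices of one copy to those of the other by a matching; both matching edges have all endpoints of degree~$4$, and one checks the graph is minimally $2$-edge-connected). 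On such an edge $e$ there is no removable edge other than possibly $e$, no degree-$2$ vertex on $e$, and ``tracking $e$ along the recursion'' is exactly the missing argument. The removable-edge reduction also interacts badly with the strengthening: if the \emph{only} removable edge is the prescribed $e$ itself, deleting it loses $e$ and you would need an extra swap argument, which again runs into the degree-$3$ endpoints.

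For comparison, the paper avoids the issue by carrying a much richer invariant through the reduction. It first computes a DFS tree and the classical strongly-connected orientation of a bridgeless graph (forward tree arcs plus, from each vertex, one backward arc to its lowest ancestor), then packages this as a ``structured-tree'': a strongly-connected digraph on a partition of $V(G)$ whose internal nodes are singletons, whose leaves already carry subcubic $(1,2)$-path-trees, and in which the arcs entering a leaf attach at vertices of degree at most two. It then repeatedly merges a deepest internal node with one or two of its leaf children, using the backward arc out of a child both to build the new $1$- or $2$-path and to supply the forward arc into the merged node. Strong connectivity guarantees the backward arc exists and is usable, which is precisely the re-attachment freedom your suppression argument lacks when $ab$ is unused. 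If you want to pursue your route, you would essentially have to reconstruct that kind of invariant (something guaranteeing a low-degree attachment site reachable from the vertex being reinserted); the structured-tree is one clean way of doing so.
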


\begin{proof} 
Let $G$ be connected and bridgeless. A \emph{structured-tree} $T$ on $G$ is 
a strongly connected digraph whose vertices are subsets $X_i$ of $V(G)$ 
satisfying the following properties:

\begin{itemize}
\item The $X_i$'s form a partition of $V(G)$.
\item The arcs of $T$ are of two types: the {\em forward
arcs} forming a rooted out-arborescence $A$, and the 
{\em backward arcs}, always directed from a vertex
to one of its ancestors in $A$.
\item Every arc $X_iX_j$ corresponds to some edge
$x_ix_j \in E(G)$ such that $x_i \in X_i$ and $x_j \in X_j$.

\item There is at most one backward arc leaving each vertex $X_i$ (unless $T$ is rooted at $X_i$).

\item Internal vertices of $A$ are singletons.

\item Every leaf $X_i$ of $A$ is spanned by a $(1,2)$-path-tree $T_i$ on $G$ with maximum
degree 3.

\item The (unique) forward and backward arcs incident to a leaf $X_i$ have 
endpoints in $T_i$ with degree at most two, and if these endpoints coincide,
the degree is at most one in $T_i$. In other words, adding the arcs as edges
of $T_i$ preserves maximum degree 3.
\item Every edge of $G$ is involved in at most one arc of $T$ and one path
of $T_i$. In other words, the edges of $G$ involved in $T$ and the $T_i$'s
are distinct.
 
\end{itemize}
 We first show that $G$ has a structured-tree $T$. For this,
fix a vertex $x$ and compute a Depth-First-Search tree $A$ from $x$.
Orient the edges of $A$ from $x$ to form the forward arcs. By the DFS 
property, every edge of $G$ not in $A$ joins vertices which 
are parents. Orient these edges from the descendent 
to the ancestor: these are our backward arcs. This is the 
classical algorithm to find a strongly connected orientation
of a bridgeless graph. Since we need to 
keep at most one backward arc issued from every vertex, 
we only keep the arc going to the lowest ancestor.
Note that we obtain a structured-tree $T$, where each $X_i$ is a 
singleton vertex in $G$ and every leaf $T_i$ is a trivial $(1,2)$-path-tree
on one vertex.

We now prove that every structured-tree $T$ with at least two vertices on $G$ can be reduced
to one with less vertices. This will imply that $T$ can be reduced to a single
vertex $X_i=V(G)$, hence providing the subcubic spanning $(1,2)$-path-tree $T_i$.

We start by deleting the backward arcs of $T$ which are not needed for strong connectivity.
Then we consider an internal vertex $X_j=\{x_j\}$ of $A$ with maximal height.
Let $X_1,X_2,\dots ,X_r$ be the (leaf) children of $X_j$. Each forward 
arc $X_jX_i$ corresponds to an edge $x_jx_i$, where $x_j\in X_j$ and $x_i\in X_i$.
Each of these leaves 
$X_i$ is the origin of a backward arc $X_iX'_i$ which 
we write $y_ix'_i$, where $y_i\in X_i$ and $x'_i\in X'_i$. We assume that 
our enumeration satisfies that $X'_{i+1}$ is always an ancestor of $X'_{i}$
(possibly equal to $X'_{i}$). We now discuss the different reductions, in which the
conditions of structured-trees are easily checked to be preserved.

\begin{itemize}
\item If $X_j$ has only one child $X_1$ and is not the origin of a 
backward arc, we merge $X_1$ and $X_j$ into a unique leaf $X_{1j}$
spanned by the $(1,2)$-path-tree $T_1\cup \{x_1x_j\}$. If $X_j$ is the root,
we are done, otherwise we let the forward arc entering
$X_{1j}$ be the one entering $X_j$, and the backward arc leaving
$X_{1j}$ be $X_{1j}X'_1$ (thus corresponding to the edge $y_1x'_1$).

\item If $X_j$ has only one child and is the origin of a 
backward arc $X_jX'_j$, we merge $X_1$ and $X_j$ into a unique leaf $X_{1j}$
spanned by the $(1,2)$-path-tree $T_1\cup \{x_1x_j\}$. The forward arc entering
$X_{1j}$ is the one entering $X_j$, and the backward arc leaving
$X_{1j}$ is the one of $X_{j}X'_j$.

\item If $X_j$ has at least three children, or $X_j$ has two children and is the origin of a 
backward arc, observe that deleting $X_1$ and $X_2$ from $T$
preserves strong connectivity. Hence we merge $X_1$ and $X_2$ into a unique leaf $X_{12}$
spanned by the $(1,2)$-path-tree $T_1\cup T_2\cup \{x_1x_jx_2\}$. The forward arc entering
$X_{12}$ is $x'_1y_1$ (hence reversing the backward arc $X_{1}X'_1$), and the backward arc leaving
$X_{12}$ is $X_{12}X'_2$ corresponding to $y_2x'_2$.

\item The last case is when $X_j$ has two children $X_1$ and $X_2$ and is not
the origin of a backward arc. Here we merge $X_1,X_2,X_j$ into a unique leaf $X_{12j}$
spanned by the $(1,2)$-path-tree $T_1\cup T_2\cup \{x_1x_j\}\cup \{x_2x_j\}$. If $X_j$ is the root,
we are done, otherwise we let the forward arc entering
$X_{12j}$ be the one entering $X_j$, and the backward arc leaving
$X_{12j}$ be $X_{12j}X'_2$ (thus corresponding to $y_2x'_2$).\qedhere
\end{itemize}
\end{proof}

We now turn our $(1,2)$-path-tree into a $(1,k)$-path-tree. For this we need 
to feed our original connected bridgeless graph $G$ (in which we find 
the subcubic $(1,2)$-path-tree) with some {\em additional graph} $H$ with sufficiently large degree.

\begin{lemma} \label{lemma:from-1k-to-1kp-tree}
Let $G=(V,E)$ be a graph. Let $T$ be a spanning $(1,k)$-path-tree of $G$, where $k \geq 2$. 
Let $H$ be some additional graph on $V$, edge-disjoint from $G$, with the property that 
$d_H(v)\geq 2(d_T(v)+2k)$ for all vertices $v$ of $G$. Then $G\cup H$ is spanned by 
a $(1,k+1)$-path-tree $T'$.
\end{lemma}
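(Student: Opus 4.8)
The plan is to leave the length-$1$ paths of $T$ untouched and to lengthen each length-$k$ path of $T$ by exactly one edge of $H$, so that the resulting family of paths is again a spanning path-tree of $G\cup H$. Write $\mathcal T=\mathcal T_1\cup\mathcal T_k$ for the partition of the paths of $T$ according to whether they have length $1$ or $k$. The underlying graph of $\mathcal T_1$ is a forest inside the spanning tree $\underline T$; let $V_1,\dots,V_m$ be the vertex sets of its components (some possibly singletons). Contracting each $V_i$ in $\widetilde T$ removes precisely the edges carried by the length-$1$ paths and turns $\widetilde{\mathcal T_k}$ into a tree $T^\ast$ on the $m$ super-vertices $V_1,\dots,V_m$, every super-edge being a length-$k$ path of $T$ whose two endpoints lie in the two super-vertices it joins. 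So it is enough to lengthen every $P\in\mathcal T_k$ by one (new, and pairwise distinct) edge of $H$, keeping it a path, in such a way that after the same contraction the lengthened paths still form a spanning tree of $\{V_1,\dots,V_m\}$; gluing that tree of super-vertices with the spanning tree living inside each $V_i$ then produces the desired spanning $(1,k+1)$-path-tree $T'$, edge-disjointness being automatic.

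To create room for the lengthenings I would first fix a balanced orientation of $H$, i.e.\ one with $|d^+_H(v)-d^-_H(v)|\le 1$ (every graph admits one); then $d^+_H(v)\ge d_T(v)+2k$ for every $v$, and I will perform each lengthening along an out-arc. I would then root $T^\ast$ and treat its super-edges one by one, Kruskal-style, maintaining a partition of $\{V_1,\dots,V_m\}$ into \emph{blobs} (initially the singletons). When a super-edge $P$ is treated --- say it joins $V_i$ through an endpoint $a$ and $V_j$ through an endpoint $b$, with $V_i$ and $V_j$ currently in distinct blobs --- I lengthen $P$ at one of its ends, say at $a$, using a still-unused out-arc $a\to w$ of $H$ with $w\notin V(P)$ and with the super-vertex $Q$ containing $w$ lying in a blob distinct from that of $V_j$ (the choice $w\in V_i$ itself, which re-creates the original super-edge $\{V_i,V_j\}$, being allowed); this lengthened path is a genuine path of length $k+1$, and as a super-edge it merges two blobs. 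After all $m-1$ super-edges have been treated the blobs have coalesced into one, yielding the required spanning tree.

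The heart of the matter, and the main obstacle, is to show that at every step such an arc $a\to w$ can indeed be found --- that the super-edge currently under consideration can always be rerouted so as to cross between two blobs. There is some slack: when $P$ is treated, the end $a$ has at least $d_T(a)+2k$ out-arcs, of which at most $d_T(a)$ have already been used (one for each length-$k$ path incident to $a$) and at most $k$ point into $V(P)$, so at least $k$ out-arcs of $a$ survive; if $V_i$ is a singleton none of them can point back into $V_i$, and if the blob of $V_j$ is small compared with the minimum $H$-degree at least one of them escapes it. The awkward configuration is a blob that is at once large and ``walled in'' by $V(P)$ and the arcs already used at both ends of $P$. To handle it one has the additional freedom to lengthen $P$ at its other end $b$ rather than at $a$, and --- more importantly --- the freedom not to retain all of $\mathcal T_1$: discarding a few length-$1$ paths splits an over-large super-vertex into smaller ones, the discarded paths being paid for by the same surplus of $H$-edges, so the choice of which length-$1$ paths to keep, and of the processing order and the root, should be made with this in view. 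Carrying out these exchanges while still ending with a spanning tree is the real work, and it is exactly the surplus ``$2k$'' over $d_T(v)$ in the hypothesis that makes it possible.
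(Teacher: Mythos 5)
Your proposal has a genuine gap, and you name it yourself: ``Carrying out these exchanges while still ending with a spanning tree is the real work.'' That step is exactly where the argument does not go through. In your Kruskal-style scheme, the new super-edge obtained by lengthening $P$ must cross between two currently distinct blobs, so the endpoint of the added $H$-arc must escape the blob (or the singleton super-vertex $V_j$) on the other side. But a connected component of the length-$1$ forest, and a fortiori a blob, can contain arbitrarily many vertices, while your count leaves only about $k$ spare out-arcs at the endpoint $a$; nothing prevents all of them from landing inside $V_j$. Lengthening at $b$ instead only makes the target blob larger, and ``discard a few length-$1$ paths, reconnect with surplus $H$-edges, choose the right order and root'' is not an argument but a wish list --- the surplus at each vertex is a fixed $2k$, while the set to be escaped has no bound.

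The paper sidesteps this entirely by abandoning the constraint that the lengthened $k$-path remain a \emph{tree edge} between super-vertices. It roots a structured tree, takes a leaf $X_1$ with parent $X_j$ (an internal vertex of maximal height with center $x_j$), picks a private out-neighbour $y$ of $x_j$ avoiding only the at most $2k+1$ vertices of the two paths $x_1x_j$ and $x_2x_j$ (this is where the surplus $2k$, together with the invariant ``each center keeps $r+2k$ private edges,'' is used), and then merges $X_1$ into \emph{whichever} super-vertex $X_k$ contains $y$, making the lengthened path an internal edge of the merged part-tree $T'_{1k}=T'_1\cup T'_k\cup\{P\}$. Because $X_1$ was a leaf, deleting it and its tree edge trivially preserves the tree structure, and there is no ``escape the blob'' requirement on $y$ at all. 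This local, target-agnostic merge is the idea your proposal is missing; without something like it, the $2k$ surplus is not enough to make a Kruskal-type rerouting work.
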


\begin{proof}
Start by arbitrarily orienting the edges of $H$ in a balanced way so that every 
vertex $v$ of $H$ has outdegree at least $d_T(v)+2k$. Every vertex is hence provided with
a set of \textit{private edges} in $H$, namely its outgoing arcs. We will use these private edges 
to transform $k$-paths of $T$ into $(k+1)$-paths.

In this proof, a \emph{structured-tree} $T'$ on $G$ is a rooted $(1,k)$-path-tree 
whose vertices are subsets $X_i$ partitioning $V(G)$ and
satisfying the following properties: 

\begin{itemize}

\item If $X_iX_j$ is an edge in $T'$, then there exists
a corresponding 1-path or $k$-path $x_ix_j \in E(T)$, where 
$x_i \in X_i$ and $x_j \in X_j$. 

\item If $X_j$ has 
children $X_1,...,X_r$ in $T'$ then there is a unique $x_j \in X_j$
such that $x_1x_j,..., x_rx_j$ are the corresponding paths in $E(T)$.
We call $x_j$ the {\em center} of  $X_j$.

%\item Internal vertices are singletons.
%\item $T$ is 2-edge-connected as a multigraph.
\item Every vertex $X_i$ of $T'$ is spanned by a $(1,k+1)$-path-tree $T'_i$. 
\end{itemize}

Initially, let $T'$ be the  structured-tree $T$, where each $X_i$ is a singleton element 
$\{x_i\}$ in $V(T)$. Note that all the vertices of $T'$ are trivial $(1,k+1)$-path-trees.
Our goal is to reduce $T'$ to one single vertex $X_i$, hence 
providing a spanning $(1,k+1)$-path-tree $T'_i$. We will always make sure that 
every center $x_j$ has at least $r+2k$ private edges, where $r$ is 
the number of children of  $X_j$.
Let us now show that $T'$ can be reduced
to one with less vertices (unless $T'$ is a single vertex).

We consider an internal vertex $X_j$ of $T'$ with maximal height.
Let $X_1,\dots ,X_r$ be the (leaf) children of $X_j$ corresponding to 
paths $x_1x_j,..., x_rx_j$, where $x_j$ is the center of $X_j$. If one of these paths, say $x_1x_j$,
is an edge, we simply create a new vertex $X_{1j}$ by concatenating $X_1$ and $X_j$
and letting $T'_{1j}=T'_{1}\cup T'_{j}\cup \{x_1x_j\}$. So we can assume that 
every $x_ix_j$-path has length $k$.
We now discuss the different reductions, in which the
conditions of structured-trees are easily checked to be preserved.

\begin{itemize}
\item If $X_j$ has at least two children $X_1$ and $X_2$, we consider 
a private neighbor $y$ of $x_j$ which is not a vertex of the paths 
$x_1x_j$ and $x_2x_j$. Such a $y$ exists since $x_j$ has at least 
$2k+2$ private neighbors. We assume that $y\in X_k$, and free to 
exchange $X_1$ and $X_2$, we can assume that $X_k\neq X_1$.
Call $P$ the $(k+1)$-path obtained by concatenating the $k$-path $x_1x_j$ 
with the edge $x_jy$.
To conclude, we add $X_1$ to the set $X_k$ to form the 
set $X_{1k}$ which is spanned by $T'_{1k}=T'_{1}\cup T'_{k}\cup \{P\}$.
Here $x_j$ loses one private edge, but $X_j$ has one child less.

\item If $X_j$ has only one child $X_1$, we again consider 
a private neighbor $y$ of $x_j$ which is not a vertex of the path 
$x_1x_j$. If $y\in X_k$ and $X_k\neq X_1$, we conclude as in 
the previous case. If $X_k=X_1$, we add $X_1$ to the set $X_j$ to form the 
set $X_{1j}$ which is spanned by the $(1,k+1)$-path-tree $T'_{1j}=T'_{1}\cup T'_{j}\cup \{x_jy\}$.
Here $x_j$ loses one private edge, but $X_j$ has no more children.\qedhere
\end{itemize}\end{proof}

The next result follows from  Lemma~\ref{lemma:13-tree} and repeated applications of Lemma~\ref{lemma:from-1k-to-1kp-tree}:

\begin{corollary} \label{corollary:existence-1-lp1-tree}
For every $\ell$, there exists $L$ such that if $G=(V,E)$ is a 
$2$-edge-connected graph and $H$ is some additional graph
on $V$ with minimum degree at least $L$, then one can form a spanning 
$(1, \ell+1)$-path-tree $T$ where $d_T(v)\leq d_H(v)$ for all
vertices $v$.
\end{corollary}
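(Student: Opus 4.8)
The plan is to prove Corollary~\ref{corollary:existence-1-lp1-tree} by chaining together Lemma~\ref{lemma:13-tree} and $\ell-1$ successive applications of Lemma~\ref{lemma:from-1k-to-1kp-tree}, being careful to split the large minimum degree of $H$ into $\ell-1$ disjoint ``budgets'', one for each application.

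First I would observe that Lemma~\ref{lemma:13-tree} applied to the $2$-edge-connected graph $G$ yields a subcubic spanning $(1,2)$-path-tree $T_2$ of $G$, so that $d_{T_2}(v)\le 3$ for every vertex $v$. This is the base case with $k=2$. Next, I would partition the edge set of $H$ into $\ell-1$ spanning subgraphs $H_2, H_3, \dots, H_\ell$, each with minimum degree at least $L/(\ell-1)$; this is possible by, say, a nearly equitable $(\ell-1)$-edge-coloring of $H$ (Proposition~\ref{werra}), provided $L$ is large enough that $L/(\ell-1)$ exceeds the threshold we need at each step. Then I would apply Lemma~\ref{lemma:from-1k-to-1kp-tree} iteratively: given a spanning $(1,k)$-path-tree $T_k$ of $G\cup H_2\cup\cdots\cup H_{k-1}$ (or of $G$ when $k=2$), we feed it the additional graph $H_k$, which is edge-disjoint from everything used so far, and obtain a spanning $(1,k+1)$-path-tree $T_{k+1}$ of $G\cup H_2\cup\cdots\cup H_k$. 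The hypothesis of Lemma~\ref{lemma:from-1k-to-1kp-tree} requires $d_{H_k}(v)\ge 2(d_{T_k}(v)+2k)$; since a $(1,k)$-path-tree on a graph with no isolated vertices has bounded degree --- indeed $d_{T_k}(v)$ is at most some constant depending only on the construction at each stage (the subcubic start plus the at most a bounded number of concatenations that touch $v$) --- we can choose $L$ large enough, as a function of $\ell$ alone, so that $L/(\ell-1)\ge 2(d_{T_k}(v)+2k)$ holds at every step $k\le\ell$.

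After $\ell-1$ steps we reach $k+1=\ell+1$, giving a spanning $(1,\ell+1)$-path-tree $T:=T_{\ell+1}$ of $G\cup H$. It remains to verify $d_T(v)\le d_H(v)$ for all $v$. Here I would note that $T$ is a tree (as a path-tree, $\tilde T$ is a tree), so $d_T(v)$ equals the number of edges of $\tilde T$ at $v$, which --- tracing through the reductions --- is controlled by the original subcubic degree plus contributions bounded in terms of $\ell$; in any case $d_T(v)$ is bounded above by a constant $C(\ell)$, whereas $d_H(v)\ge L$ which we are free to take larger than $C(\ell)$. So the inequality is immediate once $L$ is chosen appropriately.

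The main obstacle I anticipate is bookkeeping the degree bounds cleanly: one must make sure that at every intermediate stage the path-tree $T_k$ has degree bounded by a quantity independent of $|V(G)|$ and of $L$, depending only on $\ell$ (or even only on $k$), so that a single choice of $L=L(\ell)$ suffices uniformly. The subcubic start from Lemma~\ref{lemma:13-tree} is what makes this possible --- without it, $d_{T_2}(v)$ could be as large as $d_G(v)$ and the whole scheme would collapse. One should also double-check that the successive additional graphs $H_2,\dots,H_\ell$ are genuinely edge-disjoint from $G$ and from one another, which is why partitioning $E(H)$ at the outset (rather than reusing $H$) is the right move; and that ``$d_T(v)\le d_H(v)$'' in the conclusion is really a statement about the final tree, not about the intermediate ones, so no constraint propagates backwards.
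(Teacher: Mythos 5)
There is a genuine gap, and it is exactly at the point you flagged as the ``main obstacle'': the claim that $d_{T_k}(v)$ stays bounded by a constant $C(\ell)$ is false. In the extension step of Lemma~\ref{lemma:from-1k-to-1kp-tree}, a $(k+1)$-path of $T_{k+1}$ is formed by appending a private edge $x_jy$ to a $k$-path of $T_k$ ending at $x_j$; the vertex $y$ becomes a \emph{new} endpoint of a path of $T_{k+1}$. For a fixed vertex $v$, the number of private edges $x_jv$ pointing into $v$ that can be consumed this way is only bounded by the in-degree of $v$ in the balanced orientation of $H_k$, i.e. roughly $d_{H_k}(v)/2$, which scales with $L$. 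Thus $d_{T_{k+1}}(v)$ can be as large as $d_{T_k}(v) + d_{H_k}(v)$, and after a single application the tree degree is already of order $L/(\ell-1)$, not of order $1$. With equal budgets $d_{H_k}(v)\approx L/(\ell-1)$, the hypothesis $d_{H_3}(v)\geq 2(d_{T_3}(v)+2k)$ at the next step would ask for roughly $L/(\ell-1) \geq 2\cdot L/(\ell-1) + O(k)$, which cannot hold; so the equal split collapses at $k=3$.

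The paper's proof avoids this by feeding in \emph{geometrically growing} budgets rather than equal ones: it picks edge-disjoint $\varepsilon_i$-fractions $H_1,\dots,H_{\ell-1}$ of $H$ with $\varepsilon_{i+1}=4\varepsilon_i$. Then $d_{H_{i+1}}(v)\approx 4\varepsilon_i d_H(v)$ comfortably exceeds $2(d_{T_i}(v)+2k)$ because the accumulated tree degree is $d_{T_i}(v)\leq d_{T_0}(v)+\sum_{j\le i} d_{H_j}(v)\approx 3+\varepsilon_1 d_H(v)(4^i-1)/3$, which is strictly less than $d_{H_{i+1}}(v)/2$. Crucially, the final bound is \emph{not} ``$d_T(v)\leq C(\ell)$'' but rather $d_T(v)\leq d_{T_0}(v)+\sum_{i=1}^{\ell-1}d_{H_i}(v)$, and the conclusion $d_T(v)\leq d_H(v)$ is obtained by choosing $\varepsilon_1$ small enough that $\sum_i \varepsilon_i < 1$ (so the sum of the fractions is a proper fraction of $d_H(v)$). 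Your scheme would work if you replaced the nearly-equitable $(\ell-1)$-split by fractions with exponentially increasing densities whose total density is still below~$1$; as written, the uniform split cannot supply the geometric growth that the iteration of Lemma~\ref{lemma:from-1k-to-1kp-tree} demands.
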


\begin{proof}
We first apply Lemma~\ref{lemma:13-tree} to get a subcubic 
$(1,2)$-path-tree $T_0$ from $G$. Fix $\varepsilon_1 >0$
to be a sufficiently small number. We choose a sequence 
of edge-disjoint subgraphs $H_1,\dots ,H_{\ell-1}$ of $H$,
where each $H_i$ is an $\varepsilon_i$-fraction of 
$H$, where $\varepsilon _{i+1} = 4\varepsilon _{i}$ for all
$i$. Free to choose $L$ large enough as a function of 
$\varepsilon_1$, we can clearly obtain the desired subgraphs 
$H_1, \dots H_{\ell - 1}$ by repeatedly applying 
Proposition \ref{halfgraph}.  Since $L$ is sufficiently large,
for each vertex $v$, we have that $d_{H_1}(v) \geq \varepsilon_1 L - 10 {\ell}^{
\ell} > 2d_{T_0}(v) + 4 \ell$. Thus, by   
Lemma~\ref{lemma:from-1k-to-1kp-tree}, we can use 
$H_1$ to extend $T_0$ into a $(1,3)$-path-tree $T_1$.
Note that $d_{T_1}(v)\leq d_{T_0}(v)+d_{H_1}(v)$ . Now we have that
$d_{H_2}(v)\geq 3.5 d_{H_1}(v) > 2 d_{T_1}(v) + 4 \ell $, and thus, we can 
again use 
$H_2$ as an additional graph to extend $T_1$ into 
a $(1,4)$-path-tree $T_2$ with $d_{T_2}(v) \leq d_{T_0}(v)+d_{H_1}(v) + d_{H_2}(v)$. We iterate this process to form our
$(1, \ell+1)$-path-tree $T$. Note that $d_T(v) \leq d_{T_0}(v) +
\sum_{i=1}^{\ell-1} d_{H_i}(v) < L \leq d_H(v)$, where the 
second to last inequality follows from the fact that we can 
choose $\varepsilon _{1}$ to be arbitrarily small.
\end{proof}

Our ultimate goal now is to find path-trees where the lengths 
of the paths are multiple of some fixed value $\ell$. One way to do so is to transform $(1,\ell +1)$-path-trees into $(\ell,2\ell)$-path-trees.
Note that if $\ell$ is even, and our graphs $G$ and $H$ are bipartite
with same bipartition, then there
is no spanning $(\ell,2\ell)$-path-tree since an even path always  
connects a partite set with itself. The next result asserts
that we can nevertheless connect each partite set separately.

\begin{lemma}\label{l2l}
For every even integer $\ell$, there exists $L$ such that if $G=(V,E)$ is a 
$2$-edge-connected bipartite graph with vertex partition $(A,B)$
and $H$ is some additional bipartite graph with vertex partition $(A,B)$
and minimum degree at least $L$, then one can form an
$(\ell, 2\ell)$-path-tree $T$ spanning $A$ where $d_T(v)\leq d_H(v)$ for every vertex $v$.
\end{lemma}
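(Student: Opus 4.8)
The plan is to handle the new difficulty — parity — first, and then to run a structured-tree reduction in the spirit of Lemmas~\ref{lemma:13-tree} and~\ref{lemma:from-1k-to-1kp-tree}. Since $\ell$ is even, every path of an $(\ell,2\ell)$-path-tree has even length and so joins a fixed colour class to itself; the handy consequence is that $\ell-1$ is \emph{odd}, and odd paths issued from $A$ end in $B$. I would therefore split $H$ into two edge-disjoint spanning subgraphs $H_0\cup H_1$, with $H_0$ a suitably small fraction of $H$ (obtained by iterating Proposition~\ref{halfgraph}), both still of minimum degree large in terms of $L$; apply Corollary~\ref{corollary:existence-1-lp1-tree} to $G$ and $H_0$ with the parameter ``$\ell-2$'' to get a spanning $(1,\ell-1)$-path-tree $T_0$ of $G$ with $d_{T_0}(v)\le d_{H_0}(v)$ for all $v$ (for $\ell=2$ one simply takes $T_0$ to be a spanning tree of $G$); and note that, all paths of $T_0$ being odd, every one of them runs between $A$ and $B$, so $\tilde T_0$ is a spanning tree of $A\cup B$ whose two colour classes are exactly $A$ and $B$. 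Finally I would orient $H_1$ in a balanced way, endowing each vertex $v$ with a reservoir of about $d_{H_1}(v)/2$ \emph{private} (outgoing) $H_1$-edges, far more than $\ell\,d_{T_0}(v)$.

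Next I would grow $\tilde T_0$ into the desired object by a structured-tree reduction. Following Lemma~\ref{lemma:from-1k-to-1kp-tree}, I would maintain a structured-tree whose nodes partition $A\cup B$, whose arcs correspond to pairwise edge-disjoint paths of $T_0$ (with the usual centre condition), and each of whose nodes is spanned — as regards its intersection with $A$ — by an $(\ell,2\ell)$-path-tree, together with the guarantee that every vertex still has enough private $H_1$-edges. I would then repeatedly decrease the number of nodes by taking a deepest internal node: for each incident arc, a $T_0$-path $Q$ of length $1$ or $\ell-1$ ending at a $B$-vertex $b$ of that node, I would extend $Q$ past $b$ by exactly $\ell-|Q|$ private $H_1$-edges (an odd number, so the walk ends back in $A$), thus producing a genuine path of length exactly $\ell$ from the $A$-endpoint of $Q$ to some vertex $z$, and merge the node containing that endpoint with the node containing $z$; purely-$B$ leaves that no longer need to be spanned are pruned, and a $B$-singleton that has become isolated is deleted. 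Each reduction strictly drops the node count, so the process ends with a single node: an $(\ell,2\ell)$-path-tree $T$ spanning $A$ (in fact one all of whose paths have length $\ell$). The bound $d_T(v)\le d_H(v)$ then comes for free, since $d_{T_0}(v)\le d_{H_0}(v)$ and each $T_0$-path is enlarged using only $O(\ell)$ private $H_1$-edges, which — with $H_0$ a small enough fraction — stays comfortably inside the reservoir at $v$ and inside $d_G(v)+d_{H_1}(v)$.

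Some of this is routine: the parity/length bookkeeping (an odd $T_0$-path extended by the odd number $\ell-|Q|$ of private edges does land in $A$ at total length exactly $\ell$, never at an intermediate value), and the genuineness and edge-disjointness of the constructed paths (the private chain visits only fresh vertices, of which there remain at least $L-O(\,\cdot\,)$ to choose from at each step, and its endpoint $z$ can be steered away from the node being processed so that no cycle is ever closed among the $A$-vertices). The real work — and the point where the argument departs from Lemmas~\ref{lemma:13-tree} and~\ref{lemma:from-1k-to-1kp-tree} — will be to pin down the structured-tree invariants in this bipartite, span-$A$-only setting: because every arc of $\tilde T_0$ runs between $A$ and $B$, a deepest internal node whose centre lies in $A$ has to be treated separately from one whose centre lies in $B$, and one has to check that pruning the purely-$B$ leaves never strands an $A$-vertex and that a deepest internal node always admits one of the reductions. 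Once these invariants are set up correctly, the reduction carries through just as in the earlier proofs and yields the claimed $(\ell,2\ell)$-path-tree.
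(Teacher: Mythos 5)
Your overall framework — split $H$ into a small fraction and a reservoir, build a spanning path-tree of $G$ using the small fraction, then run a bipartite structured-tree reduction in which paths incident to a deepest internal node (a $B$-singleton) are completed to $\ell$-paths using private resources — is genuinely the paper's strategy, with one cosmetic change (you start from a $(1,\ell-1)$-path-tree and append variable-length completions, whereas the paper starts from a $(1,\ell+1)$-path-tree and uniformly appends $(\ell-1)$-paths). Your handling of parity, the pruning of $B$-leaves, and the final degree count $d_T(v)\le d_H(v)$ are all fine.

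The genuine gap is in what you use as private resources. The paper does not hand each vertex raw outgoing edges of $H_1$; it first applies Theorem~\ref{dense} to the reservoir to produce an $(\ell-1)$-path-graph $H'$ with $d_{H'}(v)\approx d_H(v)/\ell$ and $\mathrm{conf}(H')\le\varepsilon$, and then the private objects at a vertex $v$ are its out-going $(\ell-1)$-paths of $H'$. This pre-construction is what makes the reduction go through: the completions are already pairwise edge-disjoint simple paths, their incidences are automatically balanced over $V$, and the low conflict ratio guarantees a non-conflicting private path is always available. Your plan instead grows a length-$(\ell-|Q|)$ chain greedily one $H_1$-edge at a time, and you assert the chain ``visits only fresh vertices, of which there remain at least $L-O(\cdot)$ to choose from at each step.'' That bound is not justified: a chain of length $\ell-1$ consumes a private edge at each of its $\ell-1$ intermediate vertices, and you have no control over how many of the up to $|V|-1$ chains pass through a given intermediate vertex $w$, so $w$'s private reservoir (of size roughly $d_{H_1}(w)/2\ge L/4$) can be exhausted. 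A deletion-type count to dodge depleted vertices only works if $L$ grows with $|V|$, which is not allowed here; the statement requires $L=L(\ell)$. So either you need a load-balancing argument for the greedy chains that you do not supply, or — as the paper does — you should pre-package the reservoir into an $(\ell-1)$-path-graph via Theorem~\ref{dense} so that balance and non-conflict come for free.
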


\begin{proof}
We first use a small $\varepsilon$-fraction of $H$ (and still call 
$H$ the additional graph minus this fraction for convenience) in order to apply
Corollary~\ref{corollary:existence-1-lp1-tree}. We can then 
obtain a spanning $(1,\ell +1)$-path-tree $T$ where $d_T(v)\leq \varepsilon d_H(v)$ for all
vertices $v$. Note that $\varepsilon > 0$ can be taken arbitrarily small
since we can take $L$ so that $\varepsilon L$ is sufficiently large
to apply Corollary~\ref{corollary:existence-1-lp1-tree}. 
We now apply Theorem~\ref{dense}
on $H$ to find an $(\ell-1)$-path-graph $H'$ (while
preserving the balanced orientation given by the proof) on $H$ with ${\rm conf}(H') \leq \varepsilon$ 
and $$\frac{1-\varepsilon}{\ell}d_H(v) \leq d_{H'}(v) \leq \frac{1+\varepsilon}{\ell}d_H(v)$$ for all vertices $v$.
%and $d_{H'}(v)\approx d_H(v)/\ell$ for all vertices $v$. 
%Orienting the edges of $\tilde{H'}$ in a balanced way, the outdegree of every vertex is hence roughly $d_H(v)/2\ell$,
%hence providing to every vertex $v$ this many \textit{private paths} in $H'$, namely the paths outgoing from $v$ in the orientation.
%Assuming $\varepsilon$ is small enough, this is at least $4d_T(v)$ private $(\ell-1)$-paths.

In our construction, every step consists in extending a path $P$ of $T$ 
starting at some vertex $v$ using a private (i.e. out-going) $(\ell -1)$-path from $H'$. This will form either an $\ell$-path 
or a $2\ell$-path. According to the conflict ratio assumption, every such $P$ is conflicting with at most $\varepsilon|P| < |P|/8$ private paths of $v$, 
since $\varepsilon$ can be chosen to be sufficiently small.
In our upcoming process, the total number of private 
paths of $v$ we will use is at most $d_T(v) \leq \varepsilon d_H(v)$, thus at most 
1/4 of the total number of private paths of $v$ since
$d_{H'}^{+}(v) \geq \frac{1-\varepsilon}{2 \ell}d_H(v)$. Hence, even if we have already used 1/4 of the 
private paths of $v$, and we need a private path of $v$
which is non-conflicting with two paths of $T$ incident 
to $v$, we can still find one. Thus, in the upcoming arguments, 
we always assume that a private path is available 
whenever we need one.

We now turn to the construction of the 
$(\ell, 2\ell)$-path-tree $T'$ spanning $A$.
A \emph{structured-tree} $T'$ on $G$ is a rooted tree in which the vertices are 
disjoint subsets $X_i$ whose union covers a subset of $V(G)$ containing $A$
with the following properties:

\begin{itemize}

\item If $X_iX_j$ is an edge in $T'$, then there exists
a corresponding 1-path or $(\ell+1)$-path $x_ix_j \in E(T)$, where 
$x_i \in X_i$ and $x_j \in X_j$. 

\item If $X_j$ has 
children $X_1,...,X_r$ then there is a unique $x_j \in X_j$
such that $x_1x_j,..., x_rx_j$ are the corresponding paths in $T$.
We call $x_j$ the {\em center} of  $X_j$.

\item Every vertex $X_i$ containing an element of $B$ is a singleton, i.e. 
$X_i = \{x_i\}$.  

\item Every vertex $X_i$ of $T'$ is spanned by an $(\ell, 2\ell)$-path-tree $T'_i$. 

\end{itemize}

We again start with $T'$ equal to $T$ in the sense that all
$X_i$'s are singletons, and all $T'_i$'s are trivial $(\ell, 2\ell)$-path-trees.
We root $T'$ at some arbitrary vertex of $A$. Again
our goal is to show that we can reduce $T'$ until it is 
reduced to its root, which will therefore be equal to the set 
$A$, covered by an $(\ell, 2\ell)$-path-tree. Note that since
$\ell$ is even, we always have that an edge $X_iX_j$
of $T'$ connects a vertex of $B$ and a subset of $A$.

Observe first that if $T'$ has a leaf in $B$, we can simply delete 
it and keep our properties. We can then assume that all leaves are 
subsets of $A$.
We consider an internal vertex $X_j$ of $T'$ with maximal height.
Let $X_1,X_2,\dots ,X_r$ be the (leaf) children of $X_j$ corresponding to 
the paths $x_1x_j,..., x_rx_j$. Note that all $X_i$'s are subsets of 
$A$, and that $X_j=\{x_j\}$ is in $B$.
We now discuss the different reductions, in which the
conditions of structured-trees are easily checked to be preserved.

\begin{itemize}
\item If $X_j$ has at least two children $X_1$ and $X_2$, we consider 
a private $(\ell-1)$-path $x_jy$ which is non conflicting with the paths 
$x_1x_j$ and $x_2x_j$. Note that by parity, $y$ belongs to $A$. Let 
$y\in X_k$, and free to 
exchange $X_1$ and $X_2$, we can assume that $X_k\neq X_1$.
We denote by $P'$ the path obtained by concatenating the path $x_1x_j$ 
with $x_jy$. Note that 
$P'$ is an $\ell$-path or a $2\ell$-path.
To conclude, we add $X_1$ to the set $X_k$ to form the 
set $X_{1k}$ which is spanned by $T'_{1k}=T'_{1}\cup T'_{k}\cup \{P'\}$.
Note that $x_j$ loses a private path, but $X_j$ has one child less.

\item If $X_j$ has only one child $X_1$, we consider the parent $X_k$
of $X_j$ in $T'$. Note that $X_k$ is a subset of $A$. We denote by 
$x_jx_k$ the path of $T$ joining $X_j$ and $X_k$. We consider 
a private $(\ell-1)$-path $x_jy$ which is non conflicting with the paths 
$x_1x_j$ and $x_kx_j$. If $y\notin X_1$, we conclude as in 
the previous case. If $y\in X_1$, we add $X_1$ to the set $X_k$ to form the 
set $X_{1k}$ which is spanned by the $(\ell, 2\ell)$-path-tree $T'_{1k}=T'_{1}\cup T'_{k}\cup \{x_kx_jP[x_jy]\}$,
where $P[x_jy]$ is the subpath of $P$ from $x_j$ to $y$.
Here $x_j$ loses one private edge, but $X_j$ has no more children (and can then be deleted since it becomes a leaf).\qedhere
\end{itemize}
\end{proof}

We will also need the following lemma.

\begin{lemma}\label{llp1}
Let $\ell$ be a positive integer. There exists 
$L$ such that if $G_1=(V,E)$ is a $2$-edge-connected graph and $G_2=(V,F)$ is a 
graph of minimum degree at least $L$ edge-disjoint from $G_1$, 
then there is a connected $[\ell, \ell +3]$-path-graph $H$ decomposing $G_1 \cup G_2$ with ${\rm conf}(H)<
\tfrac{1}{2(\ell + 10)}$.
\end{lemma}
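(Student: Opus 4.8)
The plan is to let $G_1$ carry only the connectivity, as cheaply as possible, and to hand almost all the edges to Theorem~\ref{ll1}. First I apply Lemma~\ref{lemma:13-tree} to obtain a subcubic spanning $(1,2)$-path-tree $T_0$ of $G_1$, and put $R:=G_1\backslash\underline{T_0}$, so that $\underline{T_0}$ has maximum degree at most $3$ while $R\cup G_2$ still has minimum degree at least $L$. Using Proposition~\ref{halfgraph} a constant number of times, I split $R\cup G_2$ into a small fixed fraction $W_1$ and its complement $W_2$. Then I apply Theorem~\ref{ll1} separately to $W_1$ and to $W_2$, obtaining $(\ell,\ell+1)$-path-graphs $H_a$ decomposing $W_1$ and $H_b$ decomposing $W_2$, each with conflict ratio at most $1/4(\ell+10)$ and with $d_{H_a}(v),d_{H_b}(v)$ equal to $d_{W_1}(v)/\ell,d_{W_2}(v)/\ell$ up to a factor $1\pm\varepsilon'$. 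I keep the balanced orientation of $H_a$ given by the proof of Theorem~\ref{ll1}, so that every vertex has a huge supply of private (outgoing) $\ell$- and $(\ell+1)$-paths in $H_a$, of which only a $\tfrac{1}{4(\ell+10)}$-fraction is ever forbidden by a conflict constraint.

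Next I grow $T_0$ into an $[\ell+1,\ell+3]$-path-tree $T_0'$ spanning $V$. Following the structured-tree reductions in the proofs of Lemma~\ref{lemma:from-1k-to-1kp-tree} and Lemma~\ref{l2l}, I maintain a partition of $V$ into sets each spanned by an $[\ell+1,\ell+3]$-path-tree: at each step I take a deepest internal set $X_j$ with centre $x_j$, pick for the tree path $x_1x_j$ to one of its leaf children a private path $x_jy$ of $x_j$ non-conflicting with the (at most two) relevant tree paths at $x_j$ and not yet used, and merge off $X_1$ via the concatenated $[\ell+1,\ell+3]$-path obtained from $x_1x_j$ and $x_jy$. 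Such a private path always exists, since $x_j$ has far more private paths than the forbidden or already-used ones. As in Lemma~\ref{lemma:from-1k-to-1kp-tree}, in the sub-case where the only available merge would close a cycle one discards the short tree path $x_1x_j$; collecting all discarded tree paths gives a subgraph $D\subseteq\underline{T_0}$ of maximum degree at most $3$. The reduction ends with the single set $V$ spanned by an $[\ell+1,\ell+3]$-path-tree $T_0'$ whose edges lie in $\underline{T_0}\cup\underline{H_a}$. Finally I cover $D$ by extending each of its (length $1$ or $2$) paths at one end with a private $\ell$- or $(\ell+1)$-path of $H_a$ non-conflicting with it, obtaining a family $H_c$ of pairwise edge-disjoint paths of length in $[\ell+1,\ell+3]$.

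Let $H_a^{\mathrm{rem}}$ be $H_a$ with all private paths used above removed, and set $H:=T_0'\cup H_c\cup H_a^{\mathrm{rem}}\cup H_b$. Taking underlying graphs, $\underline{T_0'}$ contributes the non-discarded paths of $T_0$ together with the private paths spent in the reduction, $\underline{H_c}$ contributes $D$ together with the private paths spent on it, $\underline{H_a^{\mathrm{rem}}}$ contributes $W_1$ minus all spent private paths, and $\underline{H_b}$ contributes $W_2$; their union is $(\underline{T_0}\setminus D)\cup D\cup W_1\cup W_2=\underline{T_0}\cup R\cup G_2=G_1\cup G_2$, with all parts edge-disjoint. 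Thus $H$ decomposes $G_1\cup G_2$, it is an $[\ell,\ell+3]$-path-graph, and it is connected because $T_0'$ is a spanning path-tree. For the conflict bound, fix $v\ne w$: since $T_0$ is subcubic, $v$ is the free endpoint of at most $3$ paths of $T_0'$ and at most $3$ of $H_c$, and every other path of $H$ incident to $v$ is a short $\underline{T_0}$-prefix (at most two edges) followed by a distinct $H_a$- or $H_b$-path with $v$ as an endpoint. Hence the number of paths of $H$ incident to $v$ that pass through $w$ is at most ${\rm conf}(H_b)d_{H_b}(v)+3\,{\rm conf}(H_a)d_{H_a}(v)+O(1)\le\tfrac{1}{4(\ell+10)}\big(d_{H_b}(v)+3d_{H_a}(v)\big)+O(1)$, whereas $d_H(v)=d_{H_a}(v)+d_{H_b}(v)\pm O(1)$; since $d_{H_a}(v)$ is only a small fixed fraction of $d_{H_b}(v)$ and $L$ is large, this ratio stays below $\tfrac{1}{2(\ell+10)}$, as required.

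The crux — and the reason for insisting on a subcubic $(1,2)$-path-tree, for extending it only with $\ell$- or $(\ell+1)$-paths, and for routing the ``glue'' through the small reserved fraction $W_1$ — is exactly this final estimate: the spanning path-tree $T_0'$ (and the clean-up $H_c$) must contribute negligible conflict at every vertex, which needs its pre-extension paths to be short, its degrees bounded, and the $\tfrac{1}{4(\ell+10)}$ headroom of Theorem~\ref{ll1} to comfortably absorb more than double the glue's contribution. The secondary technical point is ensuring the structured-tree reduction covers all of $\underline{T_0}$, which is handled by mopping up the few discarded short paths with $H_c$ at the end.
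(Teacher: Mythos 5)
Your proposal follows the paper's proof of Lemma~\ref{llp1} essentially step for step: extract a subcubic $(1,2)$-path-tree from $G_1$ via Lemma~\ref{lemma:13-tree}, push the leftover edges into the dense part, split that into a small $1/(5\ell)$-type fraction ($W_1$/the paper's $R_1$) and the rest ($W_2$/$R_2$), apply Theorem~\ref{ll1} to each to get $(\ell,\ell+1)$-path-graphs with conflict ratio $\le\frac{1}{4(\ell+10)}$, grow the short tree-paths into a spanning $(\ge\ell)$-path-tree by a structured-tree reduction that glues on private paths from the small fraction, and finally bound ${\rm conf}(H)$ by the fact that everything touching $H_a$ contributes at most roughly $d_{H_a}(v)+O(1)$ to $v$'s degree while $d_{H_a}(v)\ll d_{H_b}(v)$ — which is precisely the paper's $d_{T'\cup H_1'}(v)\le d_{H_1}(v)+3$. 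The only real organizational difference is that you discard the stray short tree-paths from cycle-closing steps into $D$ and re-cover them at the end with $H_c$, whereas the paper recycles each such $x_1x_j$ immediately by concatenating it with a second private path $x_jz$ and feeding the result back into $H_1$; both work, and your headline claims (e.g.\ that $T_0'$ is an $[\ell+1,\ell+3]$-path-tree, and that $v$ is the ``free endpoint'' of at most $3$ paths of $T_0'$) are slightly off — a cycle-closing merge inserts a bare $\ell$- or $(\ell+1)$-path into $T_0'$, and $v$ may be the far end of many used private paths — but neither inaccuracy affects the final $[\ell,\ell+3]$ claim or the conflict estimate, which absorbs these through the $d_{H_a}(v)$ term exactly as in the paper.
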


\begin{proof} 
Start by applying Lemma~\ref{lemma:13-tree} to get a subcubic 
$(1,2)$-path-tree $T$ spanning $G_1$, and put the non-used edges of $G_1$ in $G_2$.
Still calling this graph $G_2$, we decompose $G_2$ into a $1/(5\ell)$-fraction $R_1$
and a $1-1/(5 \ell)$-fraction $R_2$, by Proposition \ref{werra}. Thus, by Theorem~\ref{ll1},
$G_2$ can then be decomposed
into two $(\ell,\ell+1)$-path-graphs $H_1$ and
$H_2$, respectively, both having conflict ratio at most 
$\tfrac{1}{4(\ell+10)}$, 
and verifying $$\frac{1-\varepsilon}{(5\ell-1)(1+\varepsilon)} d_{H_2}(v) \leq d_{H_1}(v) 
\leq \frac{1+\varepsilon}{(5\ell-1)(1-\varepsilon)} d_{H_2}(v)$$ for all vertices $v$, for any $\varepsilon$.

In our construction, every step consists in extending a path $P$ of $T$ 
starting at $v$ using a private $(\ge \ell)$-path starting at $v$ in $H_1$
(where we recall that the private paths at any vertex are 
its out-going paths in a balanced orientation of $H_1$).
This will form a $(\ge \ell)$-path. By the assumption
on the conflict ratio, every $P$ is conflicting with at most, say, half of the private paths of $v$.
Because $T$ is subcubic, the total number of private 
paths of $v$ we will need is at most six. Since $L$ can be chosen
so that $\tfrac{L}{5} \cdot \left(1-\tfrac{1}{4 (\ell + 10)} \right)$  
is arbitrarily large, we can hence assume we have enough private paths for the whole process.

We now turn to the construction of the spanning 
$(\ge \ell)$-path-tree $T'$ from $T$ and $H_1$.
A \emph{structured-tree} $T'$ on $V$ is a rooted tree in which the vertices are 
disjoint subsets $X_i$ partitioning $V$
with the following properties:

\begin{itemize}

\item If $X_iX_j$ is an edge in $T'$, then there exists
a corresponding 1-path or $2$-path $x_ix_j \in E(T)$, where 
$x_i \in X_i$ and $x_j \in X_j$. 

\item Every vertex $X_i$ of $T'$ is spanned by a $(\ge \ell)$-path-tree $T'_i$. 

\end{itemize}

We again start with $T'$ being equal to $T$ in the sense that all
$X_i$'s are singletons, and all $T'_i$'s are trivial $(\ge \ell)$-path-trees.
We root $T'$ at some arbitrary vertex. Again
our goal is to show that we can reduce $T'$ until it is 
reduced to its root, which will therefore be a spanning $(\ge \ell)$-path-tree. 

We consider a leaf $X_1$ of $T'$ with direct ancestor $X_j$.
Then there exists a path $x_1x_j$ of $T'$ having length 1 or 2. We pick a private path $x_jy\in H_1$ not conflicting with the path
$x_1x_j$. Assume $y\in X_k$. If $X_k\neq X_1$,
we denote by $P$ the path obtained by concatenating $x_1x_j$ and $x_jy$.
Then we add $X_1$ to $X_k$ to form the 
set $X_{1k}$ being spanned by $T'_{1k}=T'_{1}\cup T'_{k}\cup \{P\}$. 
If $X_k= X_1$, we add $X_1$ to $X_j$ to form the set $X_{1j}$ being spanned by $T'_{1j}=T'_{1}\cup T'_{j}\cup \{x_jy\}$. 
We choose a private path $x_jz$ in $H_1$ being not conflicting with $x_1x_j$, and 
concatenate these two paths to get a path $x_1z$ that we put back into $H_1$. 

Once the procedure above is finished, we end up with a 
spanning $(\ge \ell)$-path-tree $T'$ and an $(\ell, \ell + 1)$-path-graph $H'_1$, where
$H'_1$ is the path-graph remaining from $H_1$ after we have used some of its paths to obtain $T'$. Let 
$H:=T'\cup H'_1\cup H_2$. Then $H$ covers all edges of $G$. Note also that $H$ is an $[\ell, \ell + 3]$-path-graph.
% since no path of $T'$ or $H'_1$ has length greater than $\ell + 3$.
Since $d_{T'\cup H'_1}(v) \le d_{H_1}(v)+3$ for every vertex $v$ and we can choose
$\epsilon$ to be sufficiently small, we 
have $d_{T'\cup H'_1}(v) \le d_{H_2}(v)/4 (\ell+10)$ for every vertex $v$. Thus, 
$${\rm conf}(H) \leq {\rm conf}(H_2) + \frac{{\rm conf}(T'\cup H'_1)}{4(\ell+10)} < \frac{1}{4(\ell+10)} 
+ \frac{1}{4(\ell+10)} \leq \frac{1}{2(\ell + 10)}.$$
\end{proof}

%%%%%%%%%%%%%%%%%%%%%%%%%%%%%%%%%%%%%%%%%%%%%%%%%%%%%%%%%%%%%%%%%%%%%
%%%%%%%%%%%%%%%%%%%%%%%%%%%%%%%%%%%%%%%%%%%%%%%%%%%%%%%%%%%%%%%%%%%%%
%%%%%%%%%%%%%%%%%%%%%%%%%%%%%%%%%%%%%%%%%%%%%%%%%%%%%%%%%%%%%%%%%%%%%
%%%%%%%%%%%%%%%%%%%%%%%%%%%%%%%%%%%%%%%%%%%%%%%%%%%%%%%%%%%%%%%%%%%%%
%%%%%%%%%%%%%%%%%%%%%%%%%%%%%%%%%%%%%%%%%%%%%%%%%%%%%%%%%%%%%%%%%%%%%
%%%%%%%%%%%%%%%%%%%%%%%%%%%%%%%%%%%%%%%%%%%%%%%%%%%%%%%%%%%%%%%%%%%%%
%%%%%%%%%%%%%%%%%%%%%%%%%%%%%%%%%%%%%%%%%%%%%%%%%%%%%%%%%%%%%%%%%%%%%
%%%%%%%%%%%%%%%%%%%%%%%%%%%%%%%%%%%%%%%%%%%%%%%%%%%%%%%%%%%%%%%%%%%%%
%%%%%%%%%%%%%%%%%%%%%%%%%%%%%%%%%%%%%%%%%%%%%%%%%%%%%%%%%%%%%%%%%%%%%
%%%%%%%%%%%%%%%%%%%%%%%%%%%%%%%%%%%%%%%%%%%%%%%%%%%%%%%%%%%%%%%%%%%%%
%%%%%%%%%%%%%%%%%%%%%%%%%%%%%%%%%%%%%%%%%%%%%%%%%%%%%%%%%%%%%%%%%%%%%
%%%%%%%%%%%%%%%%%%%%%%%%%%%%%%%%%%%%%%%%%%%%%%%%%%%%%%%%%%%%%%%%%%%%%
%%%%%%%%%%%%%%%%%%%%%%%%%%%%%%%%%%%%%%%%%%%%%%%%%%%%%%%%%%%%%%%%%%%%%
%%%%%%%%%%%%%%%%%%%%%%%%%%%%%%%%%%%%%%%%%%%%%%%%%%%%%%%%%%%%%%%%%%%%%
%%%%%%%%%%%%%%%%%%%%%%%%%%%%%%%%%%%%%%%%%%%%%%%%%%%%%%%%%%%%%%%%%%%%%
%%%%%%%%%%%%%%%%%%%%%%%%%%%%%%%%%%%%%%%%%%%%%%%%%%%%%%%%%%%%%%%%%%%%%

\section{Edge-partitioning a graph into $\ell$-paths} \label{section:main}

We now have all ingredients to prove our main results, i.e. Theorems~\ref{theorem:24} and~\ref{theorem:4eulerian2}.
We start off with the proof of Theorem~\ref{theorem:24}.

\begin{proof}[Proof of Theorem~\ref{theorem:24}]
Without loss of generality, we assume that $\ell$ is even (the statement 
for $2k$ implying the statement for $k$).
First of all, we consider a maximum cut $(V_1,V_2)$ of $G$, and just 
keep the set of edges $F$ across the cut. We call $G'$ the graph $(V,F)$.
Observe that $G'$ is at least $12$-edge-connected and has minimum
degree at least $d_\ell/2$.

By Proposition \ref{prop: arc-strong}, there is an orientation $D$ of $G'$
such that $D$ is $6$-arc-strong and with $d^{+}(v)$ and $d^{-}(v)$ differing by at most one for every vertex $v$.
By applying Proposition \ref{prop: disj-arbor} to $D$ with some vertex $z$, 
we obtain 6 arc-disjoint out-arborescences, $T_1, \dots ,T_6$, rooted at $z$.  
Since each vertex $v$ has in-degree at most~$1$ in $T_i$ ($z$ has in-degree 0), and $d^{+}_D(v)$ and $d^{-}_D(v)$
differ by at most one, the graph $T_1 \cup ... \cup T_6$ is $1/2$-sparse in $G'$.

Call now $G_1:=T_1\cup T_2$, $G_2:=T_3 \cup T_4$, $G_3:=T_5 \cup T_6$,
and let $R$ be the graph consisting of all the edges of $F$ which are not 
in $G_1, G_2, G_3$. Observe that $G_1, G_2, G_3$ are connected
and bridgeless. Furthermore, the graph $G_1 \cup G_2 \cup G_3$ is $1/2$-sparse in $G'$, and hence $R$ is $1/2$-dense in $G'$.

We turn $G_1$ into an $(\ell,2\ell)$-path-tree as follows: we consider
a small $\varepsilon$-fraction $R_1$ of $R$, and apply Lemma~\ref{l2l} (with 
$G_1$ for $G$ and $R_1$ for $H$) to form an
$(\ell,2\ell)$-path-tree $T'$ spanning $V_1$ in which $d_{T'}(v)\leq d_{R_1}(v)$ for all
vertices $v \in V_1$. 
In other words, $T'$ is $\varepsilon$-sparse in $R$
(here we still call $R$ the original one minus $R_1$). Similarly, we can obtain, from $G_2$, a $\varepsilon$-sparse 
$(\ell,2\ell)$-path-tree $T''$ spanning $V_2$.
We still consider (neglecting 
the two $\varepsilon$-fractions) that $R$ is $1/2$-dense in $G'$. Add all edges of $E \backslash F$ to $R$.

Now, $G=G_3\cup\underline{T'}\cup\underline{T''}\cup R$. We claim that we can 
remove a collection of $\ell$-paths or $2\ell$-paths
from the path-tree $T'$ spanning $V_1$ in a way so that we can obtain that 
at most one vertex of $V_1$ has odd degree in $G$. Indeed, note that if $T$
is a tree and $X$ is an even subset of $V(T)$, then there exist a 
set of edges $F \subseteq E(T)$ such that for each vertex $x$, $d_F(x)$ is odd if and only if
$x \in X$ (one way to see this is to note that the characteristic vector of $X$ is in the span 
of the incidence matrix of $T$). In particular, denoting by $X_1$ the set of all odd degree vertices of 
$G_3 \cup T'' \cup R$ inside $V_1$ (and possibly removing one vertex of $X_1$ to make $X_1$ of even size)
we can find a subtree $F'$ of $T'$ such that $d_{F'}(v)$ is odd if and only if $v \in X_1$. In other words,
removing the $\ell$ or $2 \ell$-paths of $T'$ corresponding to $F'$ leaves $G$ with every vertex of $V_1$
(except possibly one) with even degree.
Similarly, we remove paths
of the path-tree $T''$ spanning $V_2$ so that 
at most one vertex of $V_2$ has odd degree. 

We still call $G$ the remaining graph after the procedure, and we add the remaining edges of $\underline{T'}\cup\underline{T''}$ to $R$. Then $G=G_3\cup R$. 
Note that $G_3$ is 2-edge-connected, and $R$ is 1/4-dense in $G$.
By applying Lemma~\ref{llp1} (with 
$G_3$ for $G_1$ and $R$ for $G_2$), we can express $G$ as a connected
$[\ell, \ell +3]$-path-graph $H$ with ${\rm conf}(H)< 1/2(\ell+10)$. Note that $d_G(v)-d_H(v)$ is 
even for every vertex $v$ -- so the degree of every vertex in $H$ is even, 
except (possibly) for two vertices $v_1\in V_1$ and $v_2\in V_2$. In this case, 
we add a dummy $\ell$-path from $v_1$ to $v_2$ in $H$ to make $H$ eulerian. By 
Theorem \ref{eulerian}, we get that $H$ has a non-conflicting eulerian tour 
from which we can deduce the desired decomposition.
\end{proof}

One important fact in the proof of Theorem~\ref{theorem:24} is that, when constructed, the path-graph
$H$ covers all edges of $G$. For this reason, it should be clear that the parity of the degree of every vertex
is preserved from $G$ to $H$. This simple remark implies the following interesting counterpart result on
eulerian graphs that are sufficiently edge-connected and have large enough minimum degree.

\begin{theorem} \label{theorem:4eulerian}
For every integer $\ell$, there exists $d_\ell$ such that every $4$-edge-connected eulerian graph
$G$ with minimum degree at least~$d_\ell$ has an eulerian tour with no cycle of length at most $\ell$.
\end{theorem}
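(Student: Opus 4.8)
The plan is to realise $G$ as a connected eulerian $[\ell,\ell+3]$-path-graph with small conflict ratio, apply Theorem~\ref{eulerian}, and then read off the resulting non-conflicting eulerian tour of that path-graph as an edge sequence. First I would split $G$ into two edge-disjoint spanning subgraphs $G_1$ and $G_2$ with $G_1$ being $2$-edge-connected and $G_2$ having minimum degree at least the threshold $L=L(\ell)$ coming from Lemma~\ref{llp1}. Since $G$ is $4$-edge-connected, Proposition~\ref{prop: arc-strong} provides a $2$-arc-strong orientation $D$ of $G$ with $|d^+(v)-d^-(v)|\le 1$; as $G$ is eulerian this forces $d^+_D(v)=d^-_D(v)=d_G(v)/2$ for all $v$. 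Being $2$-arc-strong, $D$ has at least two arc-disjoint directed paths from a fixed root $z$ to any vertex, so Proposition~\ref{prop: disj-arbor} yields two arc-disjoint out-arborescences $T_1,T_2$ of $D$ rooted at $z$. I would set $G_1:=T_1\cup T_2$ (as an undirected graph) and $G_2:=G\backslash G_1$: the two $z$--$v$ paths in $T_1$ and $T_2$ give two edge-disjoint $v$--$z$ paths in $G_1$, so no single edge separates $v$ from $z$ and $G_1$ is $2$-edge-connected; and since each vertex receives at most one in-arc from each of $T_1,T_2$, which are arc-disjoint, $d_{G_1}(v)\le 2+d^+_D(v)=2+d_G(v)/2$, hence $d_{G_2}(v)\ge d_G(v)/2-2$. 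Taking $d_\ell:=2L+4$ then guarantees $\delta(G_2)\ge L$.

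Next I would apply Lemma~\ref{llp1} to $G_1$ and $G_2$, obtaining a connected $[\ell,\ell+3]$-path-graph $H$ that decomposes $G_1\cup G_2=G$ and has ${\rm conf}(H)<1/2(\ell+10)$. The point stressed by the paper for Theorem~\ref{theorem:24} is that $H$ covers every edge of $G$, so parity is free: for each $v$, letting $I_v$ be the number of paths of $H$ passing through $v$ internally, one has $d_G(v)=2I_v+d_H(v)$ (no positive-length path has both ends at $v$), and $d_G(v)$ even forces $d_H(v)$ even. Thus $H$, being connected with all even degrees, is eulerian, and Theorem~\ref{eulerian} supplies a non-conflicting eulerian tour: a cyclic list $Q_1,Q_2,\dots,Q_m$ of the paths of $H$ where $Q_i$ ends where $Q_{i+1}$ begins and consecutive paths meet only at that common endpoint.

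Finally I would expand $Q_1Q_2\cdots Q_m$ into its edge sequence, which is an eulerian tour of $G$, and argue it has no cycle of length at most $\ell$, meaning no $k$ consecutive edges with $1\le k\le\ell$ form a closed walk. Any putative such closed walk $W'$ of length $k\le\ell$ lies inside a single $Q_i$ (impossible, since a subpath of a path has distinct endpoints) or straddles exactly one junction $Q_iQ_{i+1}$, being a nontrivial terminal subpath of $Q_i$ followed by a nontrivial initial subpath of $Q_{i+1}$ (it cannot reach $Q_{i+2}$, as that needs length at least $\ell+2$); in the latter case its two ends would coincide at a vertex lying on both $Q_i$ and $Q_{i+1}$, which by non-conflictingness is the junction vertex, forcing the terminal subpath of $Q_i$ to be trivial, a contradiction. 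This gives the theorem. The step I expect to require the most care is the degree accounting showing $d_{G_2}(v)\ge d_G(v)/2-2$ while keeping $G_1$ $2$-edge-connected; conceptually, though, all the weight sits in Lemma~\ref{llp1} and Theorem~\ref{eulerian}, and the drop from edge-connectivity $24$ to $4$ is exactly the gain from an eulerian $G$ needing no parity repair, so that the max-cut-and-path-tree detour of Theorem~\ref{theorem:24} is replaced by a single $2$-arc-strong orientation.
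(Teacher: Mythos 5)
Your proposal follows exactly the paper's argument: extract $G_1=T_1\cup T_2$ from a balanced $2$-arc-strong orientation of $G$ via Edmonds' branching theorem, set $G_2:=G\setminus G_1$ (which has minimum degree roughly $d_G(v)/2$), invoke Lemma~\ref{llp1} to obtain a connected $[\ell,\ell+3]$-path-graph $H$ decomposing $G$ with conflict ratio below $1/2(\ell+10)$, observe that $H$ is eulerian because it covers all of the eulerian graph $G$, and finish with Theorem~\ref{eulerian}. The paper's own proof is telegraphic (it simply points to ``the arguments in the second paragraph of Theorem~\ref{theorem:24}'' and asserts the short-cycle-free property), whereas you have explicitly carried out both the degree accounting giving $d_{G_2}(v)\ge d_G(v)/2-2$ and the case analysis showing a non-conflicting eulerian tour of an $[\ell,\ell+3]$-path-graph has no cycle of length at most $\ell$; both checks are correct.
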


\begin{proof}
Following the arguments in the second paragraph of Theorem \ref{theorem:24}, 
we can extract from $G$ two trees $T_1$ and $T_2$ so that $T_1\cup T_2$ is $1/2$-sparse 
in $G$. Let $G_1:=T_1\cup T_2$, and $G_2:=G\backslash G_1$. Then $G_1$ is 2-edge-connected, 
and $G_2$ is $1/2$-dense. Applying Lemma~\ref{llp1}, we can express $G$ as a 
connected $[\ell, \ell + 3]$-path-graph $H$ with ${\rm conf}(H) < 1/2(\ell + 10)$. Since $G$ is
eulerian, so is $H$. Hence $H$ has non-conflicting eulerian tours according to
Theorem \ref{eulerian}, and these tours do not have cycles of length at most~$\ell$ since
all paths of $H$ have length at least $\ell$.
\end{proof}

Theorem~\ref{theorem:4eulerian} now directly implies Theorem~\ref{theorem:4eulerian2}.

\end{document}